\def\tab(#1){\mbox{\small$\young(#1)$}\,}
\title[Symmetric structure on the endomorphism algebra of projective-injective module]
{Symmetric structure for the endomorphism algebra of projective-injective module in parabolic category}
\subjclass[2010]{17B10; 20G05}
\keywords{Parabolic BGG category, socular weights, projective-injective modules}
\author{Jun Hu}\address{School of Mathematics and Statistics,
Beijing Institute of Technology,
Beijing, 100081, P.R.~China}
\email{junhu404@bit.edu.cn}
\author{Ngau Lam}
\address{Department of Mathematics, National Cheng-Kung University, Tainan, 70101, Taiwan}
\email{nlam@mail.ncku.edu.tw}
\crefname{equation}{}{}
\DeclareFontFamily{OT1}{pzc}{}
\DeclareFontShape{OT1}{pzc}{m}{it}{<-> s * [1.20] pzcmi7t}{}
\DeclareMathAlphabet{\mathpzc}{OT1}{pzc}{m}{it}
\DeclareSymbolFontAlphabet{\mathbb}{AMSb}
\def\({\big(}
\def\){\big)}
\def\Z{\mathbb{Z}}
\def\C{\mathbb{C}}
\def\g{\mathfrak{g}}
\def\p{\mathfrak{p}}
\def\b{\mathfrak{b}}
\def\h{\mathfrak{h}}
\def\O{\mathcal{O}}
\def\lam{\lambda}
\def\Lam{\Lambda}
\def\hom{\mathrm{hom}}
\DeclareMathOperator\gmod{\!{-}gmod}
\DeclareMathOperator\Hom{Hom}
\DeclareMathOperator\End{End}
\DeclareMathOperator\Ext{Ext}
\DeclareMathOperator\soc{soc}
\DeclareMathOperator\im{Im}
\DeclareMathOperator\head{head}
\DeclareMathOperator\rad{rad}
\DeclareMathOperator\tr{tr}
\DeclareMathOperator\id{id}
\DeclareMathOperator\Id{Id}
\DeclareMathOperator\op{op}
\DeclareMathOperator\Ker{Ker}
\DeclareMathOperator\pr{pr}
\newcommand\For{\underline{\mathrm{For}}}
\newcommand\wPFlat[1][\mu]{\widetilde{P}_\flat^{#1}}
\newcommand\wLFlat[1][\mu]{\widetilde{L}_\flat^{#1}}
\newcommand\PFlat[1][\mu]{P_\flat^{#1}}
\newcommand\LFlat[1][\mu]{L_\flat^{#1}}
\newcommand\DFlat[1][\mu]{\Delta_\flat^{#1}}
\def\mod{\text{{-}mod}}
\newcommand\Pw{\widetilde{P}^{w\cdot\lam}}
\newcommand\WLp{{\mathring{W}}^\psi}
\newcommand\WLl{{\mathring{W}}^\lam}
\newcommand\fw{{\overline{\theta}_w}}
\newcommand\fwn{{\overline{\theta}}}
 \newif\ifdraft\drafttrue
\newcommand{\secref}[1]{Section~\ref{#1}}
\numberwithin{equation}{section}
\newtheorem{prop}[equation]{Proposition}
\newtheorem{thm}[equation]{Theorem}
\newtheorem{cor}[equation]{Corollary}
\newtheorem{lem}[equation]{Lemma}
\newtheorem{problem}[equation]{Conjecture}
\theoremstyle{definition}
\newtheorem{dfn}[equation]{Definition}
\theoremstyle{remark}
\newtheorem{rem}[equation]{Remark}
\begin{document}

\begin{abstract}
  We show that for any singular dominant integral weight $\lam$ of a complex semisimple Lie algebra $\mathfrak{g}$, the endomorphism algebra $B$ of any projective-injective module of the parabolic BGG category $\mathcal{O}_\lam^{\p}$ is a symmetric algebra (as conjectured by Khovanov) extending the results of Mazorchuk and Stroppel for the regular dominant integral weight. Moreover, the endomorphism algebra $B$ is equipped with a homogeneous (non-degenerate) symmetrizing form. In the appendix, there is a short proof due to K. Coulembier and V. Mazorchuk showing that the endomorphism algebra $B_\lam^{\p}$ of the basic projective-injective module of $\mathcal{O}_\lam^{\p}$  is a symmetric algebra.
\end{abstract}

\maketitle

\section{Introduction}
Symmetric algebra is an important class of algebras enjoying many good properties. The purpose of this paper is to study the symmetric structure on  the endomorphism algebra of any projective-injective (i.e. at the same time projective and injective) module in an integral block of a parabolic Bernstein-Gelfand-Gelfand (BGG) category. In the first part of this paper we study whether the endomorphism algebra $B$ of the basic projective-injective module over any finite-dimensional algebra $A$ is a symmetric algebra. The algebra $B$ can be equipped with a non-degenerate associative bilinear form $(-,-)_{\tr}$ so that $B$ is a Frobenius algebra, see Proposition \ref{symmetric} below for the precise statement. Furthermore, every indecomposable projective-injective module having isomorphic head and socle is a necessary condition for the algebra $B$ to be a symmetric algebra, see Lemma~\ref{dual1} and \ref{keylem1} below for the precise statements.

Now we assume that the head and the socle of every indecomposable projective-injective module are isomorphic and the algebra $A$ is positively graded. The definition of $(-,-)_{\tr}$ mentioned above relies on a prefixed basis of the algebra. A priori, it is unclear whether the non-degenerate associative bilinear form $(-,-)_{\tr}$ is symmetric or not. In order to characterize the algebra $B$ being a symmetric algebra, we propose a notion, called ``admissible condition", on the homogeneous bases of $B$. Roughly speaking, the admissible condition are certain symmetric conditions for the multiplication of the homogeneous basis of the algebra $B$. Under certain circumstances in the $\Z$-graded setting, we show that $B$ is a symmetric algebra if and only if there exists an admissible basis of $B$, see Corollary \ref{symm-eq} below. In this case, the form $(-,-)_{\tr}$ is symmetric, see Proposition \ref{keylem3} below. Therefore every indecomposable projective-injective module in each block of $B$ having the same graded length  is a necessary condition for the algebra being a symmetric algebra. In Proposition~\ref{keylem4} we show that there are some interesting classes of algebras such that every indecomposable projective-injective module in each block of the algebra has the same graded length.

Let $\g$ be a complex semisimple Lie algebra with a fixed Borel subalgebra $\b$ containing the Cartan subalgebra $\h$ and $\p$ a parabolic subalgebra containing the fixed Borel subalgebra $\b$. For a dominant integral weight $\lam$, let $\O^\p_\lam$ denote the subcategory of the parabolic Bernstein-Gelfand-Gelfand (BGG) category $\O^\p$\cite{Ro} with respect to the parabolic subalgebra $\p$ consisting of modules whose irreducible subquotients have highest weights belonging to the orbit of $\lam$ under the dot action of the Weyl group. The self-dual projective modules in $\O^\p$ have been studied intensively in \cite{Irv1, Irv2, IrvShelton}. Note that the notion of the self-dual projective module is the same as projective-injective   module (see Lemma~\ref{socular2} below). In those works, Irving and his collaborators attempted to speculate that every self-dual indecomposable projective module in $\O_\lam^\p$ always has the same Loewy length and hence has the same graded length. This speculation was proved by Mazorchuk and Stroppel for any regular dominant integral weight $\lam$ \cite[Theorem 5.2, Remark 5.3]{MazorStrop}  and by Coulembier and Mazorchuk  in all cases \cite{CoulMazor}.

\begin{problem} \label{op} \text{(Khovanov)} For a dominant integral weight $\lam$, the endomorphism algebra of each projective-injective module in $\O_\lam^p$ is a symmetric algebra.
\end{problem}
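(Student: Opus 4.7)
The plan is to deduce Conjecture~\ref{op} from the general framework built earlier in the paper---concretely, from Corollary~\ref{symm-eq} together with Proposition~\ref{keylem4}. Since being a symmetric algebra is Morita-invariant, it suffices to work with the basic algebra $B_\lam^\p$ of the basic projective-injective module of each block $\O^\p_\lam$, equipped with its Koszul $\Z$-grading; the problem then reduces to exhibiting an admissible homogeneous basis of $B_\lam^\p$.

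Before invoking the criterion, I would verify its standing hypotheses. The projective-injective modules in $\O^\p_\lam$ coincide with the self-dual projective modules (Lemma~\ref{socular2}), and any self-dual indecomposable has isomorphic head and socle, which is the condition isolated in Lemmas~\ref{dual1} and~\ref{keylem1}. The blocks of $\O^\p_\lam$ are Koszul, providing the required positive grading. The Coulembier--Mazorchuk theorem ensures that all indecomposable projective-injectives in a given block have the same graded length, supplying exactly the input needed by Proposition~\ref{keylem4}.

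The main work is the construction of an admissible homogeneous basis. I would parametrize the indecomposable summands of the basic projective-injective by the socular coset representatives $\mu$ and use Soergel's combinatorial description of morphisms between projective modules in $\O^\p_\lam$, together with the (self-dual) Kazhdan--Lusztig basis of the parabolic Hecke module, to construct a homogeneous basis of each $\Hom(\PFlat[\mu],\PFlat[\nu])$. The standard duality on $\O^\p_\lam$ induces a grading-preserving anti-involution on $B_\lam^\p$, and the basis can be tailored to this anti-involution so that the ``admissibility'' symmetry on the structure constants reduces to the self-duality of Kazhdan--Lusztig basis elements combined with the common graded length provided by Coulembier--Mazorchuk.

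The hard part is precisely this verification of admissibility in the singular case. One has to work simultaneously with representatives modulo the parabolic subgroup $W_\p$ and modulo the dot-stabilizer of $\lam$, and show that the cancellations inherent to singular translation functors preserve the symmetry of structure constants---whereas in the regular dominant case of Mazorchuk--Stroppel this symmetry came almost for free from the rigidity of the regular action. Once an admissible basis is produced, Corollary~\ref{symm-eq} immediately yields that $B_\lam^\p$ is a symmetric algebra, and Proposition~\ref{keylem3} equips it with the homogeneous non-degenerate symmetrizing form promised in the abstract.
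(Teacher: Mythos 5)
Your plan correctly identifies the reductions---to the basic algebra, to the Koszul-graded setting, to producing an admissible basis and applying Corollary~\ref{symm-eq}---and correctly marshals the supporting hypotheses (Lemma~\ref{socular2}, Lemma~\ref{dual1}, Coulembier--Mazorchuk's result on graded lengths). But the core step is left unexecuted, and I don't see that the route you sketch can actually close it. Condition~(iii) of Definition~\ref{HAdmissibleCond} is a very rigid pairing constraint on the structure constants: for every basis element $f$ of $\Hom(\PFlat[\mu],\PFlat[\nu])_j$ there must be a \emph{unique} partner $g$ of complementary degree with $gf=c\theta_\mu$, $fg=c\theta_\nu$ for the \emph{same} scalar $c$, all other products with basis elements vanishing. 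You propose to deduce this from ``the self-duality of Kazhdan--Lusztig basis elements combined with the common graded length,'' but KL self-duality controls graded multiplicities and dimensions of $\Hom$-spaces, not the multiplicative structure constants of the endomorphism algebra in any natural basis. Proposition~\ref{homogekeylem2} shows that an admissible basis exists \emph{if} a symmetrizing form already exists---so constructing one from scratch is logically equivalent to the conjecture itself, and your sketch would have to do genuinely new work that is neither carried out nor obviously accessible. Even in the regular case, Mazorchuk--Stroppel do not produce an admissible basis; they use a Serre functor argument, so the symmetry is not ``almost for free from rigidity'' as you suggest.

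The paper's actual proof takes a different and more economical route: it does not build an admissible basis for $B^\p_\lam$ at all. It starts from the known symmetrizing form $\tr$ on $B^\p_0$ (via Mazorchuk--Stroppel and Corollary~\ref{symm-eq}), constructs a degree-$(d_{w\cdot 0}-d_{w\cdot\lam})$ endomorphism $\fwn_w$ satisfying $\fwn_w\widetilde{T}^0_\lam(\theta_{w\cdot\lam})=\theta_{w\cdot 0}$ from \eqref{thetabar}, defines $\tr_\lam(g):=\tr(\widetilde{T}^0_\lam(g)\fwn)$ on $B^\p_\lam$, and then proves $\tr_\lam$ is a homogeneous symmetrizing form using the delicate commutation relation of Proposition~\ref{keyprop}. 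The translation functor and the element $\fwn$ do the structural work that you are implicitly asking the Soergel/KL combinatorics to do. Without supplying an analogue of Proposition~\ref{keyprop} or an actual construction of an admissible basis from the KL side, the proposal has a genuine gap at exactly the point you flag as ``the hard part.''
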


The Conjecture \ref{op} was proposed by Khovanov formulated in the beginning of Section~5 of \cite{MazorStrop}.  Mazorchuk and Stroppel  proved the conjecture for the basic projective-injective module in $\O_\lam^\p$ and regular dominant integral weight $\lam$ \cite[Theorem 4.6]{MazorStrop} and hence the endomorphism algebra of any projective-injective module in $\O_\lam^p$ is a symmetric algebra. The purpose of this paper is to give a proof of the above conjecture for the remaining case. That is, the conjecture holds for any singular dominant integral weight $\lam$. The main result of this paper is the following theorem.

\begin{thm} \label{mainthm1} For any singular dominant integral weight $\lam$, the endomorphism algebra of any projective-injective module in $\O_\lam^p$ is a symmetric algebra.
\end{thm}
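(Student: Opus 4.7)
The overall strategy is to apply Corollary \ref{symm-eq} from the first part of the paper: once an admissible homogeneous basis of $B$ is produced, symmetry follows, together with the homogeneous symmetrizing form promised in the abstract (via Proposition \ref{keylem3}). Since symmetry is Morita invariant and the endomorphism algebra of any projective-injective module in $\O_\lam^\p$ is Morita equivalent to $B_\lam^\p$, the endomorphism algebra of the basic projective-injective module, the first step is to reduce to proving the theorem for $B_\lam^\p$.

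Before building the basis, one verifies the standing hypotheses of Corollary \ref{symm-eq} for $B_\lam^\p$: the block $\O_\lam^\p$ is Koszul (hence positively graded) by Beilinson--Ginzburg--Soergel; each indecomposable projective-injective module is self-dual (Irving, and Lemma~\ref{socular2}) and so has isomorphic head and socle; and all indecomposable projective-injective modules in $\O_\lam^\p$ share the same graded length by Coulembier--Mazorchuk \cite{CoulMazor}. Thus only the existence of an admissible homogeneous basis of $B_\lam^\p$ remains to be established.

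To construct such a basis, the plan is to transport the known admissible structure from a regular block to $\O_\lam^\p$ by means of translation functors. Choose a regular dominant integral weight $\lam_0$ whose chamber has $\lam$ in its closure, so that the translation $T_{\lam_0}^\lam:\O_{\lam_0}^\p\to\O_\lam^\p$ and its adjoint $T_\lam^{\lam_0}$ are graded (after a suitable shift), biadjoint, and preserve projective-injectivity. Translation-to-the-wall sends the basic projective-injective of $\O_{\lam_0}^\p$ to a projective-injective module in $\O_\lam^\p$ containing every indecomposable summand of $P_\lam^\p$, realising $B_\lam^\p$ as an idempotent truncation of an endomorphism algebra linked to $B_{\lam_0}^\p$ via adjunction. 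Starting from Mazorchuk--Stroppel's admissible basis of $B_{\lam_0}^\p$ in the regular case \cite[Section~5]{MazorStrop}, I would push it through this correspondence to obtain a candidate homogeneous basis of $B_\lam^\p$.

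The main obstacle is verifying that the transported basis actually satisfies the admissibility relations, especially the symmetric multiplication conditions for products landing in the top graded component (which corresponds to the socle of $P_\lam^\p$). Translation functors do not respect all combinatorial data on the nose, and controlling the socle of $T_{\lam_0}^\lam$ applied to indecomposable projective-injective modules requires the detailed analysis of socular weights developed earlier in the paper, combined with the constant-graded-length hypothesis from \cite{CoulMazor} which pins down where top-degree products must lie. This compatibility check is the technical heart of the proof; once admissibility is verified, Corollary~\ref{symm-eq} yields symmetry of $B_\lam^\p$, hence of $B$, and Proposition~\ref{keylem3} simultaneously provides the homogeneous non-degenerate symmetrizing form.
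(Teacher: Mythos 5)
Your high-level strategy---reduce to the basic projective-injective module, invoke Corollary~\ref{symm-eq}, and bridge the regular and singular blocks by graded translation functors---points in the same general direction as the paper, but there is a genuine gap at exactly the spot you flag as ``the technical heart.'' You propose to transport an admissible \emph{basis} from a regular block to $B_\lam^\p$ and then verify the admissibility relations of Definition~\ref{HAdmissibleCond}, and you openly note that this verification needs to be done, but you do not supply it, nor do you describe a mechanism that could make it work. The difficulty is concrete: the graded translation $\widetilde{T}_\lam^0$ does not take the socle map $\theta_{w\cdot\lam}$ to $\theta_{w\cdot 0}$; by~\eqref{T-theta} and~\eqref{thetabar} there is a degree gap of $d_{w\cdot 0}-d_{w\cdot\lam}$. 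Consequently a na\"{\i}vely transported set of morphisms cannot close up under the top-degree pairing required by admissibility condition~(iii), and there is no algebra homomorphism from $B_0^\p$ onto $B_\lam^\p$ (in either direction) along which a basis could simply be pushed, since the translation functors are only faithful up to a multiplicity of $|W_\lam|$.

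The paper's proof is built precisely to overcome this: instead of transporting a basis, it transports a \emph{form}. For each $w\in\check{W}^\lam$ it introduces the degree-$(d_{w\cdot 0}-d_{w\cdot\lam})$ correcting endomorphism $\overline{\theta}_w\in\End_{\widetilde{A}^\p(0)}\bigl(\widetilde{P}_\flat^{w\cdot 0}\bigr)$ with $\overline{\theta}_w\,\widetilde{T}_\lam^0(\theta_{w\cdot\lam})=\theta_{w\cdot 0}$, sets $\overline{\theta}=\sum_w\overline{\theta}_w$, and defines $\tr_\lam(g):=\tr\bigl(\widetilde{T}_\lam^0(g)\,\overline{\theta}\bigr)$. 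Non-degeneracy is straightforward via Lemma~\ref{G-keylemma0}, while symmetry rests on Proposition~\ref{keyprop}, which shows $\overline{\theta}$ can be slid across the translated morphisms in top-degree products, i.e.\ $\widetilde{T}_\lam^0(f)\,\widetilde{T}_\lam^0(h)\,\overline{\theta}_y=\widetilde{T}_\lam^0(f)\,\overline{\theta}_w\,\widetilde{T}_\lam^0(h)$. That identity---proved by pushing back up with $\widetilde{T}_0^\lam$, controlling the components $q^w_{ji}$ of $\widetilde{T}_0^\lam(\overline{\theta}_w)$ in Lemma~\ref{t}, and using that $\tr$ is already a symmetrizing form on $B_0^\p$---is precisely the missing ingredient in your proposal; without an analogue of $\overline{\theta}$ and Proposition~\ref{keyprop} the admissibility check has no way to succeed. (The appendix records a third, shorter argument of Coulembier and Mazorchuk, an abstract bimodule lemma for a biadjoint pair $(F,G)$ with $FG\cong\Id^{\oplus r}$; it is closer in spirit to your idempotent-truncation remark but bypasses admissible bases entirely, and it too requires a nontrivial argument that your sketch does not contain.) One smaller point: $\End_{\O_\lam^\p}(Q)$ for a general projective-injective $Q$ need not be Morita equivalent to $B_\lam^\p$---only to the idempotent truncation of $B_\lam^\p$ by the summands occurring in $Q$---so the reduction to the basic case is properly handled by Corollary~\ref{kmu} rather than a blanket Morita-invariance claim.
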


We sketch an outline of our proof. The result of Mazorchuk and Stroppel \cite[Theorem 4.6]{MazorStrop}
ensures that there exists a symmetrizing form on the endomorphism algebra of the basic projective-injective module in $\O_0^\p$. By the general theory developed in Section 3, there exists an admissible basis for the endomorphism algebra of the basic projective-injective module in $\O_0^\p$ and the attached canonical form $\tr$ is a symmetrizing form. For a singular dominant integral weight $\lam$, we apply the graded translation functor to connect $\O_0^\p$ with $\O_\lam^\p$. Then we use the crucial endomorphism $\fwn$ to define a form $\tr_\lam$ on the endomorphism algebra of the basic projective-injective module of $\O_\lam^\p$ through the graded translation functor  and the canonical form $\tr$. As a result, we can show with help of the technical Proposition \ref{keyprop} that $\tr_\lam$ is a homogeneous symmetrizing form.

After a first version of the paper was submitted, Professor Volodymyr Mazorchuk sent us a short proof of Theorem \ref{mainthm1} due to Coulembier and himself. We appreciate his patience to explain their very clever argument in the short proof to us and their kindness to allow us to include the proof as an appendix of this paper.

\smallskip

The paper is organized as follows. In Section 2 we work in a general setting to study the endomorphism algebra $B$ of the basic projective-injective module over any finite-dimensional algebra $A$. We first show in Proposition \ref{symmetric} that if the algebra $A$ is equipped with an anti-involution fixing each simple $A$-module then $B$ is endowed with a non-degenerate form ``$\tr$" (equivalently, a non-degenerate associative bilinear form $(-,-)_{\tr}$). In particular,  $B$ is a Frobenius algebra. The canonical form``$\tr$" on $B$ depends on the choice of a prefixed appropriate  basis (Definition \ref{appropriate basis}) of $B$. In Section 3 we work in a $\Z$-graded setting and propose a notion of admissible basis in Definition \ref{HAdmissibleCond}. We show in Proposition \ref{keylem3} that the canonical form attached to an admissible basis is always symmetric. In Corollary \ref{symm-eq} we show that under certain circumstances in a $\Z$-graded setting $B$ is symmetric if and only if there is an admissible basis for $B$. In Proposition~\ref{keylem4} we show that with certain special assumptions in the $\Z$-graded setting (which are satisfied in the parabolic BGG category case), all the indecomposable projective-injective modules in any fixed block of certain finite-dimensional algebra always have the same graded length. In Section 4 we give the proof of Theorem \ref{mainthm1}. The whole section is devoted to showing the existence of the endomorphism $\fwn$ and to developing some useful properties of $\fwn$ described in Proposition~\ref{keyprop}. The properties of $\fwn$  are  the main ingredients of the proof of Theorem~\ref{mainthm1}. Finally,  a short proof (due to Coulembier and Mazorchuk) of Theorem \ref{mainthm1} is in the appendix.

\bigskip

\noindent {\it Notations:} We let $\Z$, $\Z_+$ and $\Z_{>0}$ denote the sets of all, non-negative and positive integers, respectively. Let $\C$ denote the field of complex numbers. The identity map of a set $X$ is denoted by $\id_X$. The image of a function $f$ is denoted by $\im(f)$.

\bigskip

\section{Endomorphism algebras of projective-injective modules}\label{End}

The purpose of this section is to study the endomorphism algebra of the basic projective-injective module over any finite-dimensional algebra in a general setting.

Let $A$ be a finite-dimensional unital $K$-algebra over an algebraically closed field $K$ and the category of finite-dimensional $A$-modules is denoted by $A\mod$. Let $\{L^\lam|\lam\in\Lam\}$ denote a complete set of pairwise non-isomorphic simple $A$-modules, where $\Lam$ is a finite index set. From our assumption that  $K$ is algebraically closed, we have that
 \begin{equation}\label{scalarendo}
\text{$\End_A(L^\lam)=K$, \quad for any $\lam\in\Lam$}.
\end{equation}
For each $\lam\in\Lam$, let $P^\lam$ and $I^\lam$ denote the projective cover and the injective envelope of $L^\lam$, respectively.
For $M\in A\mod$, the socle of $M$, denoted by $\soc (M)$, is the
largest semisimple submodule of $M$ and the radical of $M$, denoted by $\rad(M)$, is the smallest
submodule such that $M /\rad (M)$ is semisimple.  We will call $M /\rad (M)$, denoted by $\head(M)$, the head of $M$.

We let
$$
\Lam_0:=\{\lam\in\Lam\,|\,\text{$P^\lam$ is an injective module in $A\mod$}\}.
$$
Therefore $\Lam_0$ parameterizes all the indecomposable projective-injective modules in $A\mod$. Henceforth, we assume that $\Lam_0\neq\emptyset$. For every $\lam\in\Lam_0$, there is a unique $\lam'\in\Lam$ such that $P^\lam\cong I^{\lam'}$ and $\soc(P^\lam)\cong L^{\lam'}$ since $P^\lam$ is injective. It is clear that the map $'$ from $\Lam_0$ to the set $\Lam$ is injective.

\begin{dfn} The module $\bigoplus_{\lam\in\Lam_0}P^\lam$ is called the basic projective-injective module of $A\mod$. We define
\begin{equation}\label{Hecke}
B:=\End_{A}\Bigl(\bigoplus_{\lam\in\Lam_0}P^\lam\Bigr)=\bigoplus_{\lam,\mu\in\Lam_0}\Hom_A(P^\lam,P^\mu).
\end{equation}
\end{dfn}

We assume that the $K$-algebra $A$ is equipped with a $K$-linear anti-involution $\ast$. For each $M\in A\mod$, we define the dual module $M^\ast$ of $M$ as follows: $M^\ast:=\Hom_{K}(M,K)$ as a $K$-vector space, and $(af)(m):=f(a^\ast m)$ for any $a\in A$, $m\in M$ and $f\in M^\ast$. Then the anti-involution $\ast$ defines a contravariant exact functor from $A\mod$ to itself. It is clear that the functor $\ast$ gives an equivalence of categories since $(M^*)^*\cong M$ for all $M\in A\mod$. In particular, $(L^\lam)^\ast$ is an irreducible module for every $\lam\in\Lam$. Furthermore, $(I^\lam)^\ast$ and $(P^\lam)^\ast$, $\lam\in\Lam$, are the projective cover and the injective envelope of $(L^\lam)^\ast$, respectively.

\begin{lem} \label{dual1} Assume that the $K$-algebra $A$ is equipped with a $K$-linear anti-involution $\ast$ satisfying $(L^\lam)^\ast\cong L^\lam$ for any $\lam\in\Lam$. Then the map $'$ defines an involution on the set $\Lam_0$. Moreover, for any $\lam\in\Lam_0$, we have $$P^{\lam}\cong I^{\lam'}\cong (P^{\lam'})^\ast\qquad \text{and} \qquad P^{\lam'}\cong I^\lam\cong (P^\lam)^\ast.$$
\end{lem}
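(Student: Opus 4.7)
The plan is to leverage the observation, already recorded in the paragraph preceding the lemma, that the contravariant equivalence $\ast$ interchanges projective covers and injective envelopes of dual simples, and to combine this with the hypothesis $(L^\mu)^\ast \cong L^\mu$ to rewrite both sides of the defining isomorphism $P^\lam \cong I^{\lam'}$.

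First I would specialize that observation to the present setting: since $(L^\mu)^\ast \cong L^\mu$ for every $\mu \in \Lam$, the statement that $(P^\mu)^\ast$ is the injective envelope of $(L^\mu)^\ast$ collapses to $(P^\mu)^\ast \cong I^\mu$, and dually $(I^\mu)^\ast \cong P^\mu$, for all $\mu \in \Lam$. This is the only input needed from the hypothesis.

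Next, fix $\lam \in \Lam_0$ and apply $\ast$ to the defining isomorphism $P^\lam \cong I^{\lam'}$. The previous step rewrites both sides to give $I^\lam \cong (P^\lam)^\ast \cong (I^{\lam'})^\ast \cong P^{\lam'}$. In particular $P^{\lam'}$ is injective, which means $\lam' \in \Lam_0$, so the map $'$ sends $\Lam_0$ into itself. Combined with the definition of $'$ applied now to $\lam'$, we have $P^{\lam'} \cong I^{(\lam')'}$, and the isomorphism $I^\lam \cong P^{\lam'}$ just established yields $I^{(\lam')'} \cong I^\lam$; uniqueness of injective envelopes forces $(\lam')' = \lam$, so $'$ is an involution on $\Lam_0$.

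The two displayed chains of isomorphisms are then immediate: the first, $P^\lam \cong I^{\lam'} \cong (P^{\lam'})^\ast$, is the definition of $'$ followed by $(P^{\lam'})^\ast \cong I^{\lam'}$; the second is obtained by swapping the roles of $\lam$ and $\lam'$, which is legitimate precisely because $'$ has just been shown to be an involution. I do not anticipate any real obstacle here—the whole argument is a direct unwinding of the definition of $'$ under the duality $\ast$, and the only delicate point (that $\ast$ sends projective covers to injective envelopes of duals, and preserves indecomposability) has already been dispatched in the discussion preceding the statement.
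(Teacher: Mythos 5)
Your proof is correct and takes essentially the same approach as the paper: both establish $(P^\mu)^\ast\cong I^\mu$ and $(I^\mu)^\ast\cong P^\mu$ from the hypothesis, deduce $P^{\lam'}\cong I^\lam$ so that $\lam'\in\Lam_0$, and then close the loop to get $(\lam')'=\lam$. The only cosmetic difference is in the final step — you compare injective envelopes $I^{(\lam')'}\cong I^\lam$, while the paper invokes $(M^\ast)^\ast\cong M$ directly — but this is not a genuinely different argument.
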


\begin{proof}
Since $(L^\lam)^\ast\cong L^\lam$ for any $\lam\in\Lam$, we have $(I^\lam)^\ast\cong P^\lam $ and $ (P^\lam)^\ast \cong I^\lam$. Therefore $ P^{\lam'}\cong (I^{\lam'})^\ast\cong (P^\lam)^\ast$ for all $\lam\in\Lam_0$ and hence $P^{\lam'}$ is projective and injective for every $\lam\in\Lam_0$. It follows that $\lam'\in\Lam_0$ and $(\lam')'=\lam$  for any $\lam\in\Lam_0$ since $(M^*)^*\cong M$ for all $M\in A\mod$. Thus the map $'$ defines an involution on the set $\Lam_0$. This completes the proof.
\end{proof}

From now on, we assume that the map $'$ is an involution on $\Lam_0$ throughout this section. Thus we have $\soc(P^{\lam'})=\soc(I^{\lam})\cong L^\lam$ and hence $\Hom_{A}(P^\lam,\soc(P^{\lam'}))$ is one-dimensional by \eqref{scalarendo}. Therefore for each $\lam\in\Lam_0$ the subspace of all the homomorphisms $f\in\Hom_{A}(P^\lam,P^{\lam'})$ satisfying $\im(f)=\soc(P^{\lam'})$ is one-dimensional. For each $\lam\in\Lam_{0}$, there is a unique (up to a scalar) nonzero homomorphism
\begin{equation}\label{theta}
\theta_{\lam}\in\Hom_{A}(P^\lam,P^{\lam'}) \quad\text{satisfying\quad $\im(\theta_{\lam})=\soc(P^{\lam'})$.}
 \end{equation}
 We will fix a $\theta_{\lam}$ for each $\lam\in\Lam_0$.

\begin{dfn} \label{appropriate basis}  Assume that the map $'$ is an involution on $\Lam_0$. A $K$-basis $\Upsilon$ of $B$ is said to be appropriate if $$\bigsqcup_{\lam,\mu\in\Lam_0}\Hom_A(P^\lam,P^\mu)\supseteq \, \Upsilon\, \supseteq \{\theta_\lam\,|\,\lam\in\Lam_0\}.$$
\end{dfn}

A $K$-linear map $\tau$ from a $K$-algebra $R$ to $K$ is called a form on $R$. The form $\tau$ induces an associative bilinear form $(-,-)_{\tau}$ on $R$ defined by $(f,g)_{\tau}:=\tau(fg)$ for all $f,g\in R$. Recall that a bilinear form  $(-,-)$ on $R$ is called associative if  $(fh,g)=(f,hg)$ for all $f,g,h\in R$.
The form $\tau$ is called symmetric (resp., non-degenerate) if the bilinear form $(-,-)_{\tau}$ is symmetric (resp., non-degenerate). The form $\tau$ is called a symmetrizing form on $R$ if the bilinear form $(-,-)_{\tau}$ is a non-degenerate symmetric form. $R$ is called a symmetric algebra if $R$ is equipped with a symmetrizing form.

Given an appropriate $K$-basis of $B$ of the following form
\begin{equation}\label{fixedbasis}
\Upsilon:=\bigcup_{\lam,\mu\in\Lam_0}\Bigl\{f_j^{\lam,\mu}\Bigm|f_j^{\lam,\mu}\in\Hom_{A}(P^\lam,P^\mu),\,\,1\leq j\leq\dim\Hom_{A}(P^\lam,P^\mu)\Bigr\},
\end{equation}
we define a form $\tr$ on $B$ determined by
\begin{equation}\label{tr0}
\tr(f_{j}^{\lam,\mu}):=\begin{cases} 1, &\text{if $\mu=\lam'$ and $f_j^{\lam,\mu}=\theta_{\lam}$;}\\
0, &\text{if $\mu\neq\lam'$, or $\mu=\lam'$ and $f_j^{\lam,\mu}\neq\theta_\lam$.}
\end{cases}
\end{equation}

\begin{dfn} \label{canonicalform} The form $\tr$ defined in (\ref{tr0}) is called the canonical form attached to the appropriate basis $\Upsilon$ of $B$.
\end{dfn}

\begin{lem} \label{keylemma0}  Assume that the map $'$ is an involution on $\Lam_0$.  For $\lam,\mu\in\Lam_0$ and $0\neq f\in\Hom_A(P^\lam,P^{\mu})$, there exist $g\in\Hom_A(P^{\mu},P^{\lam'})$ and
$h\in\Hom_A(P^{{\mu'}},P^\lam)$ such that $gf=\theta_\lam$ and $fh=\theta_{{\mu'}}$.
\end{lem}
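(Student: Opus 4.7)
The plan is to construct $g$ and $h$ separately, using that $P^\lambda, P^\mu, P^{\lambda'}, P^{\mu'}$ are all projective-injective (since $\,'$ is an involution on $\Lambda_0$), together with two standard structural facts: that $\rad(P^\lambda)$ is the unique maximal submodule of the indecomposable projective $P^\lambda$, and that $\soc(P^\mu)\cong L^{\mu'}$ is simple and essential in the indecomposable injective $P^\mu=I^{\mu'}$.

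For the existence of $g$, I would first observe that $\theta_\lam$ factors as $P^\lam\twoheadrightarrow L^\lam\hookrightarrow P^{\lam'}$ since $\im(\theta_\lam)=\soc(P^{\lam'})\cong L^\lam$ is simple and $P^\lam$ has simple head $L^\lam$; consequently $\ker(\theta_\lam)=\rad(P^\lam)$. Next I would show $\ker(f)\subseteq\rad(P^\lam)$: otherwise $\ker(f)+\rad(P^\lam)=P^\lam$ by maximality of $\rad(P^\lam)$, so Nakayama's lemma forces $\ker(f)=P^\lam$, contradicting $f\neq 0$. Hence $\theta_\lam$ kills $\ker(f)$ and factors as $\theta_\lam=\bar\theta_\lam\circ\pi$, where $\pi:P^\lam\twoheadrightarrow\im(f)$ and $\bar\theta_\lam:\im(f)\to P^{\lam'}$. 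Writing $f=\iota\circ\pi$ with $\iota:\im(f)\hookrightarrow P^\mu$, the injectivity of $P^{\lam'}$ (since $\lam'\in\Lam_0$) lets $\bar\theta_\lam$ extend along $\iota$ to some $g:P^\mu\to P^{\lam'}$, and then $g\circ f=g\circ\iota\circ\pi=\bar\theta_\lam\circ\pi=\theta_\lam$.

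For the existence of $h$, recall $\theta_{\mu'}:P^{\mu'}\to P^{(\mu')'}=P^\mu$ has image $\soc(P^\mu)$. Since $P^\mu=I^{\mu'}$ is an indecomposable injective envelope of $L^{\mu'}$, its socle is the simple module $L^{\mu'}$ and is essential in $P^\mu$; in particular every nonzero submodule of $P^\mu$ contains $\soc(P^\mu)$, so $\im(\theta_{\mu'})\subseteq\im(f)$. Viewing $f$ as the surjection $P^\lam\twoheadrightarrow\im(f)$, the projectivity of $P^{\mu'}$ lifts the composite $P^{\mu'}\xrightarrow{\theta_{\mu'}}\im(\theta_{\mu'})\hookrightarrow\im(f)$ to some $h:P^{\mu'}\to P^\lam$ with $f\circ h=\theta_{\mu'}$.

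The construction has no serious obstacle; the only points that need care are (i) the Nakayama step establishing $\ker(f)\subseteq\rad(P^\lam)$, which rests on indecomposability of $P^\lam$, and (ii) the essentiality of $\soc(P^\mu)$ in $P^\mu$, which uses indecomposable injectivity of $P^\mu$. Both ingredients are available precisely because $\lam,\mu\in\Lam_0$ and $\,'$ is an involution, so $\lam',\mu'\in\Lam_0$ as well.
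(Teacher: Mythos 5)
Your proof is correct and takes essentially the same route as the paper's: both constructions produce $g$ by extending a map $\im(f)\to P^{\lam'}$ with image $\soc(P^{\lam'})$ along $\im(f)\hookrightarrow P^\mu$ using injectivity of $P^{\lam'}$, and produce $h$ by lifting a map from $P^{\mu'}$ through the surjection $P^\lam\twoheadrightarrow\im(f)$ using projectivity of $P^{\mu'}$. The only cosmetic difference is that you arrange the factorizations (via $\ker(f)\subseteq\rad(P^\lam)=\ker(\theta_\lam)$ and via $\im(\theta_{\mu'})\subseteq\im(f)$) so that $\theta_\lam$ and $\theta_{\mu'}$ come out on the nose, whereas the paper first obtains a nonzero scalar multiple and then rescales.
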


\begin{proof} Set $N:=f^{-1}(\soc(P^{\mu}))$. The restriction map $f|_{N}\,:\, N\longrightarrow \soc(P^{\mu})$ of $f$ on $N$ is surjective because the image of the nonzero homomorphism $f$ contains $\soc(P^{\mu})\cong L^{{\mu'}}$. Since $P^{{\mu'}}$ is the projective cover of $\soc(P^{\mu})\cong L^{{\mu'}}$ and $f|_{N}$ is an epimorphism, we can find a homomorphism $h_1: P^{{\mu'}}\rightarrow N$ such that the image of $f|_{N}h_1$ is $\soc(P^{\mu'})$. Now $h_1$ is regarded as a homomorphism from $P^{{\mu'}}$ to $P^{\lam}$ and hence $fh_1=c\theta_{{\mu'}}$ for some nonzero $c\in K$. Therefore we have $fh=\theta_{{\mu'}}$ by choosing $h=c^{-1}h_1$.

Dually, we set $M:=f(P^\lam)$. We have a natural embedding $\iota: M\hookrightarrow P^{\mu}$. Since $P^\lam$ has a unique simple head $L^\lam$, it follows that $M$ has a unique simple head $L^\lam$ too. Let $\pi: M\rightarrow P^{\lam'}$ be a homomorphism which sends $M$ onto the unique simple socle $L^\lam$ of $P^{\lam'}$. Since $P^{\lam'}$ is injective, we can find a homomorphism $g_1\in\Hom_{A}(P^{\mu},P^{\lam'})$ such that $g_1\iota =\pi $. Therefore $g_1f=\pi f=c\theta_\lam$ for some nonzero $c\in K$. Now we take $g=c^{-1}g_1$, then $gf=\theta_{\lam}$ as required. This completes the proof of the lemma.
\end{proof}

\begin{prop} \label{symmetric} If the map $'$ is an involution on $\Lam_0$,  then the bilinear form $(-,-)_{\tr}$ induced by $\tr$ is non-degenerate. In other words, $B$ is a Frobenius algebra over $K$. In particular, if the $K$-algebra $A$ is equipped with a $K$-linear anti-involution $\ast$ satisfying $(L^\lam)^\ast\cong L^\lam$ for any $\lam\in\Lam$, then $B$ is a Frobenius algebra over $K$.
\end{prop}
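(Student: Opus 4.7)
The plan is to establish left non-degeneracy of $(-,-)_{\tr}$: for any nonzero $f\in B$, produce $h\in B$ with $\tr(fh)\neq 0$. Since $B$ is finite-dimensional, the left and right radicals of any bilinear form have equal dimension, so left non-degeneracy implies full non-degeneracy; then the Frobenius property follows from the definition, and the ``in particular'' clause falls out by combining this with Lemma \ref{dual1}, which guarantees that the anti-involution hypothesis forces $'$ to be an involution on $\Lam_0$.

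To execute the core step, I would first use the decomposition $B=\bigoplus_{\lam,\mu\in\Lam_0}\Hom_A(P^\lam,P^\mu)$ to write $f=\sum f^{\lam,\mu}$ and fix indices $(\lam,\mu)$ with $f^{\lam,\mu}\neq 0$. Then I would invoke Lemma \ref{keylemma0} applied to $f^{\lam,\mu}$ to produce $h\in\Hom_A(P^{\mu'},P^\lam)$ satisfying $f^{\lam,\mu}h=\theta_{\mu'}$, and view $h$ as an element of $B$ by extending it by zero on the other summands. Because $h$ has source $P^{\mu'}$ and target $P^\lam$, the composition $f^{\lam'',\mu''}h$ vanishes for $\lam''\neq\lam$, giving
\[
fh=\sum_{\mu''\in\Lam_0}f^{\lam,\mu''}h,\qquad f^{\lam,\mu''}h\in\Hom_A(P^{\mu'},P^{\mu''}).
\]

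Finally, I would apply $\tr$ term by term using \eqref{tr0}: that formula forces every element of $\Hom_A(P^{\mu'},P^{\mu''})$ to have trace zero whenever $\mu''\neq(\mu')'=\mu$, so only the $\mu''=\mu$ summand survives; and for that summand $f^{\lam,\mu}h=\theta_{\mu'}$ has trace $1$ by construction. Hence $\tr(fh)=1\neq 0$, completing the argument.

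The proof is essentially bookkeeping once Lemma \ref{keylemma0} is available; that lemma is the real technical input. The one point requiring care is confirming that all ``unwanted'' summands $f^{\lam,\mu''}h$ for $\mu''\neq\mu$ are killed by $\tr$, which is guaranteed by the definition of $\tr$ on the appropriate basis $\Upsilon$: $\tr$ is supported only on the ``diagonal'' blocks $\Hom_A(P^\nu,P^{\nu'})$ (and only on the distinguished generator $\theta_\nu$ there). Everything else is formal manipulation of the direct-sum and composition structure of $B$.
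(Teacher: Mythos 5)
Your proposal is correct and follows essentially the same route as the paper's proof: fix a nonzero component $f_{\lam,\mu}$ of $f$, use Lemma~\ref{keylemma0} to produce $h\in\Hom_A(P^{\mu'},P^\lam)$ with $f_{\lam,\mu}h=\theta_{\mu'}$, and note that the definition of $\tr$ in \eqref{tr0} annihilates all other summands of $fh$, giving $\tr(fh)=1$. The bookkeeping you spell out (which summands survive composition with $h$, and which survive $\tr$) is exactly what the paper's displayed chain of equalities encodes.
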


\begin{proof} Let $f=\sum_{\lam,\mu\in\Lam_0} f_{\lam, \mu}$ be a nonzero element in $B$ such that $f_{\lam, \mu}\in\Hom_{A}(P^\lam,P^\mu)$ for all $\lam,\mu\in\Lam_0$. Let $g:=f_{\lam, \mu}$ be a nonzero component of $f$. By Lemma \ref{keylemma0}, we can find a homomorphism $h: P^{\mu'}\rightarrow P^\lam$ such that $gh=\theta_{\mu'}$. Therefore we have
$$
(f,h)_{\tr}=\tr\big((\sum_{\gamma,\beta\in\Lam_0} f_{\gamma,\beta})h\big)=\tr\big((\sum_{\beta\in\Lam_0} f_{\lam, \beta})h\big)
=\tr(f_{\lam, \mu}h)=\tr(\theta_{\mu'})=1\not=0.
$$
The third equality follows from the definition (\ref{tr0}) of the form ${\tr}$. The second part of the proposition follows from  Lemma~\ref{dual1}.
\end{proof}

In general, it is not easy to determine whether the bilinear form $(-,-)_{\tr}$ on $B$ is symmetric or not. The following lemma gives necessary conditions for a form to be a symmetrizing form on $B$.

\begin{lem} \label{keylem1}Assume the map $'$ is an involution on $\Lam_0$. If there exists a symmetrizing form $\tau$ on $B$, then we have
\begin{enumerate}
\item[(i)] $\lam'=\lam$ and $\tau(\theta_\lam)\neq 0$ for all $\lam\in\Lam_0$;
\item[(ii)] $\tau(f)=0$ for all $f\in\Hom_A(P^\lam,P^\mu)$ with $\lam\not=\mu\in\Lam_0$.
\end{enumerate}
\end{lem}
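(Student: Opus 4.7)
The plan is to exploit three structural ingredients: the symmetry $\tau(ab)=\tau(ba)$, the orthogonality of the primitive idempotents $e_\lam=\id_{P^\lam}$ of $B$ (so that $e_\lam e_\mu=\delta_{\lam\mu}e_\lam$ and $1_B=\sum_{\lam\in\Lam_0}e_\lam$), and the factorization of $\theta_\lam$ as $P^\lam\twoheadrightarrow L^\lam\hookrightarrow P^{\lam'}$ arising from the identification $\soc(P^{\lam'})\cong L^{(\lam')'}=L^\lam$.

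Part (ii) is essentially immediate: for $f\in\Hom_A(P^\lam,P^\mu)$ with $\lam\ne\mu$ one has $f=e_\mu f e_\lam$ in $B$, and by symmetry $\tau(f)=\tau(e_\mu f e_\lam)=\tau(e_\lam e_\mu f)=\tau(0)=0$.

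For part (i), I would first prove $\lam'=\lam$ by contradiction. Suppose $\lam'\ne\lam$ and aim to show $\tau(\theta_\lam h)=0$ for every $h\in B$, contradicting non-degeneracy. By (ii) applied to $\theta_\lam h\in\bigoplus_{\mu}\Hom_A(P^\mu,P^{\lam'})$, only the component $h_0$ of $h$ in $\Hom_A(P^{\lam'},P^\lam)$ can possibly contribute, so it suffices to prove $\tau(\theta_\lam h_0)=0$. By symmetry this equals $\tau(h_0\theta_\lam)$, and I would argue $h_0\theta_\lam=0$: since $\im\theta_\lam=\soc(P^{\lam'})\cong L^\lam$, the composition is determined by $h_0|_{L^\lam}\colon L^\lam\to P^\lam$, which is zero because the unique simple submodule of $P^\lam$ is $\soc(P^\lam)=L^{\lam'}\not\cong L^\lam$.

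Once $\lam'=\lam$, the element $\theta_\lam$ lies in $\End_A(P^\lam)=e_\lam B e_\lam$. The same idempotent/symmetry trick used in (ii) shows that $\tau$ restricts to a non-degenerate form on $\End_A(P^\lam)$. I would then check that $\theta_\lam$ lies in the two-sided socle of $\End_A(P^\lam)$: any $r\in\rad\End_A(P^\lam)$ is a non-isomorphism of the indecomposable injective $P^\lam$, so $\im(r)\subseteq\rad(P^\lam)$ and $\ker(r)\supseteq\soc(P^\lam)$; since $\theta_\lam$ kills $\rad(P^\lam)$ (it factors through $\head(P^\lam)$) and $r$ kills $\soc(P^\lam)=\im\theta_\lam$, both $\theta_\lam r=0$ and $r\theta_\lam=0$. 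Writing any $y\in\End_A(P^\lam)$ as $c\cdot\id+r$ with $c\in K$ and $r\in\rad\End_A(P^\lam)$, one obtains $\theta_\lam y=c\theta_\lam$, hence $\tau(\theta_\lam y)=c\tau(\theta_\lam)$. Non-degeneracy of $\tau|_{\End_A(P^\lam)}$ then forces $\tau(\theta_\lam)\ne0$.

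The main obstacle will be the $\lam'=\lam$ step: unlike (ii), it uses both associativity directions simultaneously together with the structural fact that the image $L^\lam$ of $\theta_\lam$ inside $P^{\lam'}$ fails to embed back into $P^\lam$ when $\lam\ne\lam'$, a point that requires the socle analysis of $P^\lam$ beyond what symmetry and idempotent orthogonality alone provide.
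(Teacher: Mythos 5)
Your proof is correct, but it is organized differently from the paper's argument, so the comparison is worth making. For part (ii) you use exactly the paper's idempotent argument: $f = e_\mu f e_\lam$ and cyclicity of $\tau$ forces $\tau(f)=0$ when $\lam\neq\mu$. For part (i) you reverse the paper's order: you first prove $\lam'=\lam$ by contradiction (if $\lam'\ne\lam$, then by (ii) only the $\Hom_A(P^{\lam'},P^\lam)$-component $h_0$ of $h$ can contribute to $\tau(\theta_\lam h)$, and then $h_0\theta_\lam=0$ because $L^\lam\not\cong\soc(P^\lam)$ — so $\theta_\lam$ is a radical vector for $(-,-)_\tau$, contradicting non-degeneracy), and only then establish $\tau(\theta_\lam)\ne0$ via the local-algebra/Fitting structure of $\End_A(P^\lam)$: $\theta_\lam$ annihilates $\rad\End_A(P^\lam)$ on both sides, so $\theta_\lam y=c\theta_\lam$ for every $y=c\,\id+r$, and non-degeneracy of the restriction $\tau|_{e_\lam Be_\lam}$ forces $\tau(\theta_\lam)\ne 0$.

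The paper goes the other way around and is slightly more economical. It first produces, by non-degeneracy alone, an $f\in B$ with $\tau(\theta_\gamma f)\ne0$, observes that only the $\Hom_A(P^\gamma,P^\gamma)$-component of $f$ can survive composition with $\theta_\gamma$, and that $\theta_\gamma f_{\gamma,\gamma}$ must lie in $K^\times\theta_\gamma$ because $\Hom_A(P^\gamma,\soc(P^{\gamma'}))$ is one-dimensional. That yields $\tau(\theta_\gamma)\ne0$ directly. Then $\gamma'=\gamma$ falls out immediately: since $\theta_\gamma=\id_{P^{\gamma'}}\theta_\gamma\id_{P^\gamma}$, cyclicity and orthogonality of the idempotents $\id_{P^\gamma},\id_{P^{\gamma'}}$ force $\tau(\theta_\gamma)=0$ unless $\gamma'=\gamma$ — in other words, $\gamma'=\gamma$ is just (ii) applied to $\theta_\gamma$ itself, which you did not exploit. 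The structural content is the same in both (that $\theta_\lam$ is ``absorbing'' under endomorphisms of $P^\lam$, because it kills $\rad(P^\lam)$ and lands in $\soc(P^{\lam'})$), but the paper packages it as a one-step computation, while yours routes it through the local ring $\End_A(P^\lam)$ and an extra contradiction argument for $\lam'=\lam$. Both are valid; yours requires the restriction $\tau|_{e_\lam Be_\lam}$ to be non-degenerate, which you correctly justified, and uses the algebraically-closed hypothesis (via $\End_A(P^\lam)/\rad\cong K$) a bit more explicitly than the paper does.
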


\begin{proof} Since $\tau$ is non-degenerate, for each $\gamma\in\Lam_0$ we can find a nonzero element $f=\sum_{\lam,\mu\in\Lam_0} f_{\lam, \mu}$ in $B$ such that $(\theta_\gamma,f)_\tau\neq 0$, where $f_{\lam, \mu}\in\Hom_{A}(P^\lam,P^\mu)$ for all $\lam,\mu\in\Lam_0$. Since $\tau(\theta_\gamma f)\not=0$, we have
$$
0\neq\theta_\gamma  f=\theta_\gamma \big(\sum_{\lam,\mu\in\Lam_0} f_{\lam,\mu}\big) =\theta_\gamma \big(\sum_{\lam\in\Lam_0} f_{\lam, \gamma}\big)=\theta_\gamma  f_{\gamma, \gamma}.
$$
The last equality follows from the fact that there is no nonzero homomorphism from $P^\lam$ to $\theta_\gamma(P^\gamma)\cong L^\gamma$ for all $\lam\not=\gamma$.
Since $\theta_\gamma  f_{\gamma, \gamma}$ is a nonzero homomorphism with image contained in $\soc(P^{\gamma'})\cong L^{\gamma}$, we have $\theta_\gamma  f_{\gamma, \gamma}=c\theta_{\gamma}$ for some nonzero $c\in K$. Therefore
$$
\tau(\theta_{\gamma})=c^{-1}\tau(\theta_\gamma  f_{\gamma, \gamma})=c^{-1}\tau(\theta_\gamma  f)=c^{-1}(\theta_\gamma,f)_\tau\not= 0.
$$
Since $\tau$ is symmetric, we have $$
0\neq\tau(\theta_\gamma)=\tau\bigl(\id_{P^{\gamma'}} \theta_\gamma \id_{P^\gamma}\bigr)
=\tau\bigl(\id_{P^\gamma} \id_{P^{\gamma'}} \theta_\gamma\bigr).
$$
Therefore we have $\gamma'=\gamma$ for all $\gamma\in\Lam_0$.

Finally, for any $\lam,\mu\in\Lam_0$ with $\lam\neq\mu$ and $f\in\Hom_A(P^\lam,P^\mu)$, we have $$
\tau(f)=\tau\bigl(\id_{P^\mu}  f \id_{P^\lam}\bigr)=\tau\bigl(\id_{P^\lam} \id_{P^\mu}  f\bigr)=\tau(0)=0
$$
since $\tau$ is symmetric. This completes the proof of the lemma.
\end{proof}

For each $A$-module $M$, we define $\rad^0(M)=M$ and define the radical filtration on $M$ inductively by $\rad^i(M)=\rad(\rad^{i-1}(M))$. Note that $\rad^i(M)=(\rad A)^iM$, where $\rad(A)$ is the Jacobson radical of $A$.

Note that in Lemma \ref{keylemma0} we do not know whether we can choose $g$ to be $h$ or not in there satisfying $gf=\theta_\lam$ and $fh=\theta_{\mu}$. The following lemma ensures that the expectation holds if there exists a symmetrizing form $\tau$ on $B$ satisfying $\tau(\theta_\lam)=1$ for all $\lam\in\Lam_0$.

\begin{prop} \label{theataD} Assume that the map $'$ is an involution on $\Lam_0$ and there exists a symmetrizing form $\tau$ on $B$ such that $\tau(\theta_\lam)=1$ for all $\lam\in\Lam_0$. For any nonzero $f\in\Hom_A(P^\lam,P^\mu)$ with  $\lam,\mu\in\Lam_0$, there exists a homomorphism $g\in\Hom_A(P^\mu,P^\lam)$ such that
$fg=\theta_\mu$ and $gf=\theta_\lam$.
\end{prop}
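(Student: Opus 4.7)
My plan is to apply Lemma~\ref{keylemma0} and show that the map it produces already satisfies both equations. By Lemma~\ref{keylem1}(i) the hypothesis forces $\lam'=\lam$ and $\mu'=\mu$, so Lemma~\ref{keylemma0} yields some $g_0\in\Hom_A(P^\mu,P^\lam)$ with $g_0f=\theta_\lam$. I would aim to prove $fg_0=\theta_\mu$ so that $g:=g_0$ does the job.

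The first observation uses the symmetry of $\tau$: $\tau(fg_0)=\tau(g_0f)=\tau(\theta_\lam)=\tau(\theta_\mu)=1$. By the non-degeneracy of $\tau$ on $B$ together with Lemma~\ref{keylem1}(ii) (which localizes the pairing to the diagonal blocks $\End_A(P^\nu)$), the identity $fg_0=\theta_\mu$ is equivalent to $\tau((fg_0-\theta_\mu)\phi)=0$ for every $\phi\in\End_A(P^\mu)$. Writing $\phi=c\cdot\id_{P^\mu}+\phi'$ with $c\in K$ and $\phi'\in\rad(\End_A(P^\mu))$, and noting that $\theta_\mu\phi=c\theta_\mu$ (because $\theta_\mu$ factors through the simple head $L^\mu$, so $\phi$ acts on it by its head scalar), the problem collapses to showing $\tau(fg_0\,\phi')=0$ for every $\phi'\in\rad(\End_A(P^\mu))$. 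Equivalently, one must verify that $fg_0$ lies in the one-dimensional socle $K\theta_\mu$ of the local symmetric algebra $\End_A(P^\mu)=e_\mu Be_\mu$.

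The crucial step is thus establishing $fg_0\in K\theta_\mu$. Intuitively, the relation $g_0f=\theta_\lam$ forces $g_0$ to collapse $f(P^\lam)\subseteq P^\mu$ onto $\soc(P^\lam)\cong L^\lam$, while any $\phi'\in\rad(\End_A(P^\mu))$ compresses $P^\mu$ into $\rad(P^\mu)$; combined with the symmetric structure of $B$ (which, as in the proof of Lemma~\ref{keylem1}, makes head and socle scalars of any $b\in\End_A(P^\mu)$ coincide, so that $\theta_\mu$ generates the socle from both sides), this should yield the required vanishing $\tau(g_0\phi'f)=0$. As a backup, one could additionally invoke Lemma~\ref{keylemma0} to produce $h_0\in\Hom_A(P^\mu,P^\lam)$ with $fh_0=\theta_\mu$, exploit the identity $\theta_\lam h_0=g_0\theta_\mu$ obtained from the two factorizations of $g_0fh_0$, and then modify $g_0$ by an element of $\ker(\cdot f)$ to force $fg=\theta_\mu$ without disturbing $gf=\theta_\lam$.

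I expect the main obstacle to be this structural step of placing $fg_0$ inside $\soc(\End_A(P^\mu))$: it is not purely formal from the individual Hom-space relations and genuinely leverages the full symmetric (not merely Frobenius) structure of $B$ together with the normalization $\tau(\theta_\lam)=\tau(\theta_\mu)=1$. Everything else in the plan is either a direct consequence of Lemmas~\ref{keylemma0}--\ref{keylem1} or a routine manipulation of the trace via cyclicity.
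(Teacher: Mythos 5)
There is a genuine gap in your proposed argument, and it sits exactly where you anticipated: the crucial step ``$fg_0 \in K\theta_\mu$'' is not just unproven, it is in general false for an \emph{arbitrary} $g_0$ supplied by Lemma~\ref{keylemma0}, and the proposition does not claim otherwise. The statement is an existence statement, whereas your plan tries to prove the stronger assertion that \emph{every} $g_0$ with $g_0 f = \theta_\lam$ also satisfies $fg_0 = \theta_\mu$. To see why the stronger assertion can fail, let $g$ be any solution of the proposition. Then the set of all $g_0$ with $g_0 f = \theta_\lam$ is the affine subspace $g + V$ with $V = \{\,v \in \Hom_A(P^\mu,P^\lam) : vf = 0\,\}$, and for $v\in V$ one has $fg_0 = \theta_\mu + fv$ with $\tau(fv) = \tau(vf) = 0$. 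Thus $fg_0 \in K\theta_\mu$ forces $fv = 0$, i.e.\ it forces $V \subseteq \{\,v : fv = 0\,\}$. This inclusion amounts to $f\End_A(P^\lam) = \End_A(P^\mu)f$ inside $\Hom_A(P^\lam,P^\mu)$, which is a genuine constraint that need not hold in a noncommutative algebra. Your intuitive justification cannot repair this: the relation $g_0 f = \theta_\lam$ constrains $g_0$ only on $\im(f)$, while for $\phi'\in\rad(\End_A(P^\mu))$ the image $\phi'f(P^\lam)$ need not lie inside $\im(f)$, so nothing forces $\tau(g_0\phi'f)$ to vanish. The ``backup'' paragraph ($\theta_\lam h_0 = g_0\theta_\mu$ and modifying $g_0$ by an element of $\ker(\cdot f)$) is likewise left at the heuristic level and does not establish that the needed correction term $\theta_\mu - fg_0$ lies in $f\cdot\ker(\cdot f)$.

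The paper's proof sidesteps this entirely by a different and essential idea: it does \emph{not} start from $g_0 f = \theta_\lam$. Instead it first takes some $h$ with $fh \ne 0$ (which Lemma~\ref{keylemma0} provides), then defines $s$ to be the \emph{maximal} integer such that some $g \in \Hom_A(P^\mu,\rad^s(P^\lam))$ has $fg \ne 0$, and chooses such a $g$. A contradiction argument with the maximality of $s$ shows $fg \in K^\times\theta_\mu$, which can then be normalised to $fg = \theta_\mu$. Only afterwards does the symmetry of $\tau$ enter: $\tau(gf) = \tau(fg) \ne 0$ gives $gf \ne 0$, and a second use of the maximality of $s$ (together with Lemma~\ref{keylemma0} applied to $gf$) shows that the correcting endomorphism $h$ with $gfh = \theta_\lam$ must be an isomorphism, whence $gf \in K^\times\theta_\lam$, and the normalisation $\tau(\theta_\lam) = 1$ pins down $gf = \theta_\lam$. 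The point is that it is a carefully selected $g$ (maximising the radical depth of its image) — not the first $g_0$ delivered by Lemma~\ref{keylemma0} — that enjoys both identities, and the maximality is used twice as the engine of the argument. You would need to build this (or an equivalent) selection mechanism into your proof; as written, the reduction to ``$fg_0 \in \soc(\End_A(P^\mu))$'' is correct but the verification of that membership is missing and, for an unconstrained $g_0$, unattainable.
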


\begin{proof} By Lemma \ref{keylem1}, we have $\lam'=\lam$ for all $\lam\in\Lam_0$. By Lemma \ref{keylemma0}, there is a homomorphism $h\in\Hom_A(P^\mu,P^\lam)$ satisfying $fh\not=0$. Let $s$ be the maximal integer such that there is a homomorphism $g\in\Hom_A(P^\mu,\rad^s(P^\lam))$ satisfying $fg\not=0$. We claim that $fg\in K^{\times}\theta_\mu$.

Suppose that $g\in\Hom_A(P^\mu,\rad^s(P^\lam))$ and $fg\notin K^{\times}\theta_\mu$. Then $L^\mu \cong\soc(P^{\mu})\subsetneq\im(fg)$. Applying Lemma \ref{keylemma0}, we can find $h\in\Hom_A(P^\mu,P^\mu)$ such that $fgh=\theta_\mu$. Since $\soc(P^{\mu})\subsetneq\im(fg)$, we can deduce that $h$ is not an isomorphism and hence $h$ is not injective (because every injective endomorphism of $P^\mu$ is automatically an automorphism). It follows that $\im(h)\subseteq\rad(P^\mu)$. By assumption that
$g\in\Hom_A(P^\mu,\rad^s(P^\lam))$, we have $\im(gh)\in \rad^{s+1}(P^\lam)$ and $f(gh)\neq 0$, we get a contradiction to the maximality of $s$. This proves our claim.

Now we may assume that $fg=\theta_\mu$. Note that $\tau(gf)=\tau(fg)=\tau(\theta_\mu)\neq 0$
by Lemma \ref{keylem1}. It follows that $gf\not=0$. By Lemma \ref{keylemma0} again we can find a homomorphism $h\in\Hom_A(P^\lam,P^\lam)$ such that $gfh=\theta_\lam$. It follows that $\tau(fhg)=\tau(gfh)=\tau(\theta_\lam)\neq 0$ by Lemma \ref{keylem1}. In particular, $fhg\neq 0$. We claim that $h$ is an isomorphism. Otherwise, $h$ is not injective (because every injective endomorphism of $P^\mu$ is automatically an automorphism) and hence $\im(h)\subseteq\rad(P^\lam)$.
It follows that for all $i\ge 0$,
$$
h(\rad^{i}(P^\lam))=h(\rad^i(A)P^\lam)=\rad(A)^i h(P^\lam)\subseteq\rad(A)^i\rad(P^\lam)=\rad^{i+1}(P^\lam).
$$
Then $hg(P^\mu)\subseteq h(\rad^s(P^\lam))\subseteq\rad^{s+1}(P^\lam)$. We get a contradiction to our assumption because $hg(P^\mu)\subseteq \rad^{s+1}(P^\lam)$ and $f(hg)\not=0$. This proves our claim. Therefore $h$ is an isomorphism and $gf\in K^\times\theta_\lam$. Since
$\tau(gf)=\tau(fg)$ and $\tau(\theta_\lam)=1$ for any $\lam\in\Lam_0$, it follows that $gf=\theta_\lam$.
\end{proof}
\bigskip

\section{Graded algebras}\label{Graded}
We are interested to find conditions ensuring that the canonical form $\tr$ attached to a given appropriate basis of the endomorphism algebra of any projective-injective module over any $\Z$-graded finite-dimensional algebra $A$ is a symmetrizing form. In this section, we give a sufficient condition, called an admissible condition (see Definition~\ref{HAdmissibleCond} below), for the appropriate basis of the endomorphism algebra $B=\End_{A}\Bigl(\bigoplus_{\lam\in\Lam_0}P^\lam\Bigr)$ (see (\ref{Hecke})) of the basic projective-injective module so that the canonical form $\tr$ attached to the basis is symmetric. For certain positively $\Z$-graded finite-dimensional algebras $A$,  $B$ is a symmetric algebra if and only if there exists an admissible $K$-basis of $B$. Moreover, the canonical form $\tr$ attached to the admissible basis is a symmetrizing form, see Corollary \ref{symm-eq} and Proposition \ref{keylem3} below.

%For any graded $K$-vector spaces $M=\bigoplus_{k\in\Z}M_k$ and $N=\bigoplus_{k\in\Z}N_k$, $\Hom_K(M,N)=\bigoplus_{d\in\Z}\Hom_K(M,N)_d$ is again a graded %$K$-vector space, where $\Hom_K(M,N)_d:=\bigoplus_{k\in\Z}\Hom_K(M_k,N_{k+d})$ denotes the subspace spanned by homogeneous linear maps of degree $d$ in %$\Hom_K(M,N)$ for all $d\in\Z$.

For a field $K$, a graded $K$-vector space $M$ means a $\Z$-graded $K$-vector space $M=\bigoplus_{k\in\Z}M_k$ such that $M_d$ is a  finite-dimensional $K$-subspace of $M$ for each $d\in \Z$. For $d\in\Z$ and $v\in M_d$, $v$ is called a homogeneous element of degree $d$ and we write $\deg v=d$.
A graded $K$-algebra means a finite-dimensional associative unital $K$-algebra $R$ such that $R=\bigoplus_{d\in\Z}R_d$ is a graded $K$-vector space satisfying $R_dR_k\subset R_{d+k}$, for all $d,k\in\Z$. It follows that $1\in R_0$. A graded  $K$-algebra $R$ is called positively graded if $R=\bigoplus_{d\in\Z_+}R_d$. A graded (left) $R$-module is a graded  finite-dimensional  $K$-vector space $M=\bigoplus_{d\in\Z}M_d$ such that $M$ is an
$R$-module and $R_kM_d\subset M_{d+k}$, for all $d,k\in\Z$. For a graded  $K$-module $M$ and $k\in\Z$, let $M\<k\>$ be the graded $K$-module obtained by shifting the grading on~$M$ up by~$k$. That is,
$M\<k\>_d:=M_{d-k}$, for $d\in\Z$. For graded $R$-modules $M$ and $N$ and $d\in\Z$, $f\in\Hom_R(M,N)$ is called a homogeneous homomorphism of degree $d$ if $f(M_k)\subset N_{d+k}$ for all $k\in\Z$. Let  $\Hom_R(M,N)_d$ denote the subspace of $\Hom_R(M,N)$ consisting of homogeneous homomorphisms of degree $d$. Then $\Hom_R(M,N)$ forms a graded  $K$-vector space and $\Hom_R(M,N)=\bigoplus_{d\in\Z}\Hom_R(M,N)_d$. For graded $R$-modules $M$ and $N$, let
$$
\hom_R(M,N):=\Hom_R(M,N)_0.
$$
and let $M\simeq N$ denote that there is a homogeneous isomorphism of degree $0$ between $M$ and $N$. For a graded  $K$-algebra $R$, let $R\gmod$ denote the category of graded  finite-dimensional $R$-modules with homomorphisms of degree $0$.
The graded length of $M\in R\gmod$ is defined to be \begin{equation}\label{gr-length}
 b-a+1, \quad\text{for $M=\bigoplus_{i= a}^{b}M_i$ with $M_a\not=0\not=M_b$.}
\end{equation}
In particular, the graded length of $M$ is $b+1$ if $M=\bigoplus_{i=0}^{b}M_i$ with $M_0\not=0\not=M_b$.

The forgetful functor $R\gmod\rightarrow  R\mod$ is denoted by $\underline{\mathrm{For}}$. An $R$-module $M$ is called gradable if $M\cong \underline{\mathrm{For}}(N)$ for some $N\in R\gmod$. In this case, $M$ is also said to have a graded lift. For any modules $M, N\in R\gmod$ and $j,k\in\Z$, let $\sigma_{j,k}$ denote the isomorphism of $K$-vector spaces
 \begin{align}\label{gr-shift}
\sigma_{j,k}: \Hom_R(M,N) &\longrightarrow\Hom_R(M\<j\>,N\<k\>) \\
\nonumber  & f  \mapsto f[j,k]:=\sigma_{j,k}(f)
 \end{align}
such that $\underline{\mathrm{For}}( f[j,k])=\underline{\mathrm{For}}(f)$ for all $f\in \Hom_R(M,N)$. We also let
\begin{align}\label{gr-shift=}
f[j]:=f[j,j],\quad\text{for $f\in\Hom_R(M,N) $ and $j\in\Z$.}
 \end{align}
Note that
$$
\sigma_{j,k}(\Hom_R(M,N)_d)=\Hom_R(M\<j\>,N\<k\>)_{d-j+k}\qquad\text{for all $d\in \Z$.}
$$

We will adapt the notations and assumptions defined in \secref{End}. Recall that $A$ is a finite-dimensional algebra over an algebraically closed field $K$. In the section, we will further assume that $A$ is a positively graded $K$-algebra.
Since $A$ is positively graded, every simple module in $A\gmod$ concentrates in a fixed degree. Therefore every simple $A$-module is gradable. Recall that $\{L^\lam|\lam\in\Lam\}$ denotes a complete set of pairwise non-isomorphic simple $A$-modules. For each $\lam\in\Lam$, we fix a $\Z$-grading on $L^\lam$ by letting it concentrated in degree $0$. The resulting  graded simple $A$-module is also denoted by $L^\lam$. Then $\{L^\lam\<k\>|\lam\in\Lam,k\in\Z\}$ forms a complete set of pairwise non-isomorphic graded simple $A$-modules.

For each $\lam\in\Lam$, the projective cover $P^\lam$ of $L^\lam$ is gradable (see, for example, \cite[Corollary 3.4]{GordonGreen:GradedArtin}). Note that the natural homomorphism from $P^\lam$ to $L^\lam$ is of degree $0$.  Therefore we can write $P^\lam$ in the following form
\begin{equation}\label{deg-proj}
P^\lam=\bigoplus_{i=0}^{d_\lam}P^\lam_i\quad \text{such that $P^\lam_{d_\lam}\not=0$ for all $\lam\in\Lam$.}
\end{equation}
As a consequence, $B=\End_{A}\Bigl(\bigoplus_{\lam\in\Lam_0}P^\lam\Bigr)$ is a positively graded  algebra.

\begin{lem} \label{G-keylemma0}  Assume that the map $'$ is an involution on $\Lam_0$. For $\lam,\mu\in\Lam_0$ and a nonzero $f\in\Hom_A(P^\lam,P^{\mu})_j$, there exist $g\in\Hom_A(P^{\mu},P^{\lam'})_{d_{\lam'}-j}$ and
$h\in\Hom_A(P^{\mu'},P^\lam)_{d_\mu-j}$ such that $gf=\theta_{\lam}$ and $fh=\theta_{\mu'}$.
\end{lem}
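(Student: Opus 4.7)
The plan is to reduce to the ungraded Lemma~\ref{keylemma0} and then extract the appropriate homogeneous components.

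My first step will be to normalize $\theta_\nu$ to be homogeneous of degree $d_{\nu'}$ for each $\nu\in\Lam_0$. The argument: the socle of the graded module $P^{\nu'}$ is a graded submodule, and being isomorphic to $L^\nu$ (which I place in degree $0$) it lives in a single graded component of $P^{\nu'}$. Because $A$ is positively graded, the nonzero top piece $P^{\nu'}_{d_{\nu'}}$ is killed by $A_{>0}\supseteq\rad A$ and hence lies in $\soc(P^{\nu'})$; by one-dimensionality this forces $\soc(P^{\nu'})=P^{\nu'}_{d_{\nu'}}$. Consequently, the one-dimensional space of homomorphisms $P^\nu\to P^{\nu'}$ with image $\soc(P^{\nu'})$ sits in degree $d_{\nu'}$ of $\Hom_A(P^\nu,P^{\nu'})$, which is where I fix $\theta_\nu$. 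In particular $\theta_\lam$ is homogeneous of degree $d_{\lam'}$ and $\theta_{\mu'}$ of degree $d_\mu$.

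Next, I will apply Lemma~\ref{keylemma0} to the underlying (possibly inhomogeneous) homomorphism $\For(f)$ to obtain $\widetilde g\in\Hom_A(P^\mu,P^{\lam'})$ and $\widetilde h\in\Hom_A(P^{\mu'},P^\lam)$ with $\widetilde g f=\theta_\lam$ and $f\widetilde h=\theta_{\mu'}$. To produce homogeneous replacements, I decompose $\widetilde g=\sum_k\widetilde g_k$ with $\widetilde g_k\in\Hom_A(P^\mu,P^{\lam'})_k$. Each $\widetilde g_k f$ is then homogeneous of degree $k+j$, so matching the degree-$d_{\lam'}$ part of $\widetilde g f=\theta_\lam$ forces $\widetilde g_{d_{\lam'}-j}f=\theta_\lam$, and I set $g:=\widetilde g_{d_{\lam'}-j}$. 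The analogous argument applied to $\widetilde h=\sum_k\widetilde h_k$, matching degree $d_\mu$ in $f\widetilde h=\theta_{\mu'}$, produces $h:=\widetilde h_{d_\mu-j}$ satisfying $fh=\theta_{\mu'}$.

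The only real obstacle is the first step: confirming that the one-dimensional socle of $P^{\nu'}$ sits in the top graded piece, which is what pins down the degree of $\theta_\nu$ to exactly $d_{\nu'}$. Once that is in place, the rest is routine degree matching inside a finite graded decomposition.
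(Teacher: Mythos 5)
Your proof takes the same route as the paper's: apply the ungraded Lemma~\ref{keylemma0} to get some $\widetilde g$ with $\widetilde g f=\theta_\lam$ (and similarly $\widetilde h$), decompose into homogeneous components, and extract the piece of the right degree. That is exactly what the paper does, so the core of the argument is fine.

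Two remarks on your preliminary step. First, it is unnecessary: the paper has already recorded in (\ref{deg-theta}) that $\theta_\lam$ is homogeneous of degree $d_{\lam'}$ (with the homogeneity coming from the fact that $\soc(P^{\lam'})$ is a graded submodule), so you can simply cite that and go straight to the decomposition. Second, the justification you give for pinning the socle to the top degree has the containment backwards: for a finite-dimensional positively graded algebra one has $A_{>0}\subseteq\rad A$, not $A_{>0}\supseteq\rad A$; the reverse inclusion $\rad A\subseteq A_{>0}$ is equivalent to $A_0$ being semisimple, which is not part of the standing hypotheses of Section~\ref{Graded}. The correct argument, if you want to supply one, is simpler: $P^{\nu'}_{d_{\nu'}}$ is a nonzero graded $A$-submodule (annihilated by $A_{>0}$), and since $P^{\nu'}$ has simple, hence essential, socle, every nonzero submodule contains $\soc(P^{\nu'})$; as the socle is a graded simple concentrated in a single degree, it must sit inside $P^{\nu'}_{d_{\nu'}}$, i.e.\ in degree $d_{\nu'}$. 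This needs neither one-dimensionality of $L^\nu$ nor semisimplicity of $A_0$. With that fix the proof is correct and essentially identical to the paper's.
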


\begin{proof}By Lemma~\ref{keylemma0}, there exists $g\in\Hom_A(P^{\mu},P^{\lam'})$ such that $gf=\theta_\lam$. Write $g=\sum_i g_i$, where $g_i\in \Hom_A(P^{\mu},P^{\lam'})_i$ for all $i$. Then we have $g_{d_{\lam'}-j}f=\theta_\lam$. A similar proof shows the existence of $h$.
\end{proof}

For each $\lam\in\Lam_0$, the homomorphism $\theta_\lam$ defined in (\ref{theta}) is clearly a homogeneous element in $B$ since $\soc(P^{\lam'})$ is a graded submodule of $P^{\lam'}$ \cite[Theorem 3.5]{GordonGreen:GradedArtin}. By \eqref{deg-proj}, the degree of $\theta_\lam$ in $B$ is
\begin{equation}\label{deg-theta}
\deg\theta_\lam=d_{\lam'}\quad\text{for every $\lam\in\Lam_0$.}
\end{equation}

\begin{dfn}\label{HAdmissibleCond} Let $A$ be a positively graded $K$-algebra such that $\soc(P^\lam)\simeq L^\lam\<d_\lam\>$ for each $\lam\in\Lam_0$ (in particular, $\lam'=\lam$ for each $\lam\in\Lam_0$). An appropriate basis $\Upsilon$ of $B=\End_{A}\bigl(\oplus_{\lam\in\Lam_0}P^\lam\bigr)$ consisting of homogeneous elements in $B$ is said to satisfy the admissible condition if for each $\lam, \mu\in\Lam_0$ and $j\in \Z$ with $\Upsilon\cap\Hom_A(P^\lam,P^\mu)_j\not=0$, then the following conditions hold:
\begin{enumerate}
\item[(i)] $d_\mu=d_\lam$, where $d_\gamma$ is defined in \eqref{deg-proj};
\item[(ii)]  $|\Upsilon\cap\Hom_A(P^\mu,P^\lam)_{d_\lam-j}|=|\Upsilon\cap\Hom_A(P^\lam,P^\mu)_j|$;
\item[(iii)] for any $f\in\Upsilon\cap\Hom_A(P^\lam,P^\mu)_j$, there is exactly one element $g\in\Upsilon\cap\Hom_A(P^\mu,P^\lam)_{d_\lam-j}$ such that $gf=c_{gf}\theta_\lam$, $fg=c_{fg}\theta_\mu$ for some $c_{fg}=c_{gf}\in K^{\times}$, and $hf=fh=0$ for all $h\in \Upsilon\cap\Hom_A(P^\mu,P^\lam)_{d_\lam-j}$ with $h\not=g$.
\end{enumerate}
A basis of $B$ is called admissible if it is an appropriate basis satisfying the admissible condition.
\end{dfn}

\begin{rem}\label{rem-Adm} The condition (i) means that every $P^\lam$ with $\lam\in\Lam_0$ in the same block has the same graded length (see the discussion above Proposition~\ref{keylem4}). The condition (ii) in the definition follows from the condition (iii). The condition (iii) implies that for all  $f\in\Hom_A(P^\lam,P^\mu)_j$, $g\in
\Hom_A(P^\mu,P^\lam)_{d_\lam-j}$ with $\lam, \mu\in\Lam_0$ and for all $j\in \Z$, we have
\begin{equation*}
fg=c\theta_\mu \quad\Leftrightarrow\quad gf=c\theta_\lam \quad\quad\text{for any fixed $c\in K^{\times}$}.
\end{equation*}
\end{rem}

The following proposition follows easily from Remark~\ref{rem-Adm} and the definition of the canonical form $\tr$ attached to an admissible basis defined in (\ref{tr0}).

\begin{prop} \label{keylem3} Assume that $A$ be a positively graded algebra with an admissible basis $\Upsilon$. If $\soc(P^\mu)\simeq L^\mu\<d_\mu\>$ for each $\mu\in\Lam_0$, then the canonical form $\tr$ attached to the admissible basis $\Upsilon$ is a symmetrizing form on $B$. In particular, $B$ is a symmetric algebra over $K$.
\end{prop}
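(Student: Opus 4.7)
The plan is to verify that the form $(-,-)_{\tr}$ attached to $\Upsilon$ is both non-degenerate and symmetric. Non-degeneracy is immediate: the hypothesis $\soc(P^\mu)\simeq L^\mu\<d_\mu\>$ forces $\mu'=\mu$ for every $\mu\in\Lam_0$, so $'$ reduces to $\id_{\Lam_0}$, and Proposition~\ref{symmetric} then yields non-degeneracy. The remaining task is to prove $\tr(fg)=\tr(gf)$ for all $f,g\in B$, and by $K$-bilinearity this can be checked on pairs of basis elements $f,g\in\Upsilon$.

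Fix $f\in\Upsilon\cap\Hom_A(P^{\lam_1},P^{\lam_2})_j$ and $g\in\Upsilon\cap\Hom_A(P^{\lam_3},P^{\lam_4})_k$. Thinking of $B$ as a matrix algebra of morphisms, $fg$ is the composition $f\circ g\in\Hom_A(P^{\lam_3},P^{\lam_2})$ (zero unless $\lam_4=\lam_1$), while $gf=g\circ f\in\Hom_A(P^{\lam_1},P^{\lam_4})$ (zero unless $\lam_2=\lam_3$). Since $\tr$ is supported only on the ``diagonal'' components $\End_A(P^\gamma)$ (indeed only on $K\theta_\gamma$), both $\tr(fg)$ and $\tr(gf)$ vanish automatically unless $\lam_1=\lam_4$ and $\lam_2=\lam_3$. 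I therefore reduce to $f\in\Hom_A(P^\lam,P^\mu)_j$ and $g\in\Hom_A(P^\mu,P^\lam)_k$ for some $\lam,\mu\in\Lam_0$, in which case $gf\in\End_A(P^\lam)_{j+k}$ and $fg\in\End_A(P^\mu)_{j+k}$.

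The next step is a degree reduction. By (\ref{deg-theta}) together with $\lam'=\lam$, the element $\theta_\lam$ is homogeneous of degree $d_\lam$, and condition (i) of Definition~\ref{HAdmissibleCond} gives $d_\mu=d_\lam$. Since $\Upsilon$ is a homogeneous basis, the $\theta_\lam$-coefficient of $gf$ (which by definition equals $\tr(gf)$) can be nonzero only if $j+k=d_\lam$, and analogously for $fg$. So I may assume $k=d_\lam-j$. At this point condition~(iii) supplies a unique $g^\ast\in\Upsilon\cap\Hom_A(P^\mu,P^\lam)_{d_\lam-j}$ such that $g^\ast f=c\,\theta_\lam$ and $f g^\ast=c\,\theta_\mu$ for a common scalar $c\in K^{\times}$, while every other $h$ in that component satisfies $hf=fh=0$. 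If $g=g^\ast$ then $\tr(gf)=c=\tr(fg)$; otherwise both sides are zero. This proves symmetry, and combined with non-degeneracy shows that $\tr$ is a symmetrizing form, so $B$ is a symmetric $K$-algebra.

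I do not expect any serious obstacle: the admissibility axioms were crafted precisely so that the two scalars $c_{fg}$ and $c_{gf}$ agree in the only configuration that contributes, and once the block and degree reductions are carried out the verification is a one-line matching of coefficients. The only mildly delicate points are keeping the convention on composition versus matrix multiplication in $B$ straight, and noticing that the homogeneity of $\theta_\lam$ together with condition (i) is exactly what pins down the equality $k=d_\lam-j$ that makes condition~(iii) applicable.
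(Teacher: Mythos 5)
Your proof is correct and follows essentially the same route the paper intends: the paper dismisses the statement as following "easily from Remark~\ref{rem-Adm} and the definition of the canonical form," and your argument is exactly the careful unpacking of that remark — reducing to diagonal blocks, using condition~(i) and homogeneity of $\theta_\lam$ to pin down the complementary degree $d_\lam-j$, and then reading off $c_{fg}=c_{gf}$ from condition~(iii) — together with Proposition~\ref{symmetric} for non-degeneracy.
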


\begin{prop} \label{homogekeylem2} Assume $A$ is a positively graded algebra and
$P^\lam_{d_\lam}=\soc(P^\lam)\cong L^\lam\<d_\lam\>$ for all $\lam\in\Lam_0$. If there exists a symmetrizing form $\tau$ on $B$, then there exists an admissible basis of $B$.
\end{prop}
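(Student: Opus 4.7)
The plan is to build the admissible basis $\Upsilon$ from a dual-basis construction against the pairing induced by $\tau$, exploiting the hypothesis that $\soc(P^\lam)$ sits in top degree. First I rescale each $\theta_\lam$ by a nonzero scalar so that $\tau(\theta_\lam) = 1$ for all $\lam \in \Lam_0$; Lemma~\ref{keylem1} then forces $\lam' = \lam$. The key structural fact I use repeatedly is that $\Hom_A(P^\lam, P^\lam)_{d_\lam} = K\theta_\lam$: any nonzero degree-$d_\lam$ endomorphism has image in the simple socle $\soc(P^\lam) = L^\lam\<d_\lam\>$ and is therefore a scalar multiple of $\theta_\lam$, by the one-dimensionality of $\hom_A(P^\lam, L^\lam) = K$.

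Next I verify condition~(i) of Definition~\ref{HAdmissibleCond}. Given nonzero homogeneous $f \in \Hom_A(P^\lam, P^\mu)_j$, Lemma~\ref{G-keylemma0} supplies homogeneous $g \in \Hom_A(P^\mu, P^\lam)_{d_\lam - j}$ and $h \in \Hom_A(P^\mu, P^\lam)_{d_\mu - j}$ with $gf = \theta_\lam$ and $fh = \theta_\mu$. Symmetry of $\tau$ gives $\tau(fg) = \tau(gf) = 1$, so $fg \in \End_A(P^\mu)_{d_\lam}$ is nonzero; since $P^\mu$ is generated in degree~$0$ and has no graded piece in degree exceeding $d_\mu$, this forces $d_\lam \leq d_\mu$, and the symmetric argument using $h$ yields $d_\mu \leq d_\lam$. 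Hence $d := d_\lam = d_\mu$ whenever $\Hom_A(P^\lam, P^\mu) \neq 0$.

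For the main construction, consider the bilinear form
\begin{equation*}
B_{\lam,\mu,j} \colon \Hom_A(P^\lam, P^\mu)_j \times \Hom_A(P^\mu, P^\lam)_{d-j} \to K, \qquad (f, g) \mapsto \tau(fg).
\end{equation*}
The structural fact yields $fg = \tau(fg)\theta_\mu$ and $gf = \tau(gf)\theta_\lam$, and symmetry of $\tau$ identifies the two scalars; non-degeneracy of $B_{\lam,\mu,j}$ follows directly from Lemma~\ref{G-keylemma0} together with the symmetry of $\tau$. I then process each orbit of the involution $(\lam, \mu, j) \leftrightarrow (\mu, \lam, d-j)$ exactly once. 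When $(\lam, j) \neq (\mu, d-j)$, I pick any basis $\{f_i\}$ of $\Hom_A(P^\lam, P^\mu)_j$ and its $B_{\lam,\mu,j}$-dual basis $\{g_i\}$ of $\Hom_A(P^\mu, P^\lam)_{d-j}$. In the fixed-point case $(\lam, j) = (\mu, d-j)$, the pairing is a non-degenerate symmetric bilinear form on a single $K$-vector space, so I take an orthonormal basis, which exists because $K$ is algebraically closed. Let $\Upsilon$ be the union of all these bases. It contains every $\theta_\lam$, since in the orbit $\{(\lam, \lam, d_\lam), (\lam, \lam, 0)\}$ the first space is the one-dimensional $K\theta_\lam$, forcing $\theta_\lam \in \Upsilon$; and conditions~(i)--(iii) of Definition~\ref{HAdmissibleCond} are then built into the construction, since duality gives $f_i g_k = \delta_{ik}\theta_\mu$ and the identity $c_{fg} = c_{gf}$ propagates this to $g_k f_i = \delta_{ik}\theta_\lam$.

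The main obstacle is establishing the scalar identity $c_{fg} = c_{gf}$ in~(iii): this is precisely where the symmetry of $\tau$ (as opposed to mere non-degeneracy, which would only yield the Frobenius structure of Proposition~\ref{symmetric}) is indispensable.
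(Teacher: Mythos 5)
Your argument is essentially the paper's: rescale $\theta_\lam$ so $\tau(\theta_\lam)=1$, deduce $d_\lam=d_\mu$ from Lemma~\ref{G-keylemma0} and the symmetry of $\tau$, form the nondegenerate pairings $(f,g)\mapsto fg$ valued in $K\theta_\mu$, and assemble $\Upsilon$ from dual bases, using symmetry of $\tau$ exactly as you describe to carry $f_ig_k=\delta_{ik}\theta_\mu$ to $g_kf_i=\delta_{ik}\theta_\lam$. The one imprecision is in the fixed-point case $(\lam,j)=(\mu,d-j)$: an orthonormal basis for a nondegenerate symmetric bilinear form over an algebraically closed field does \emph{not} always exist --- when $\cha K = 2$ a nondegenerate alternating form (which is symmetric in characteristic $2$) has no non-isotropic vector, hence no orthogonal basis at all. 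What is both true in every characteristic and sufficient here --- and the content of the paper's ``standard arguments'' --- is that one can choose a basis whose Gram matrix is a $0$-$1$ permutation matrix, built from an orthogonal decomposition into normalized $1\times 1$ blocks and hyperbolic $2\times 2$ blocks; Definition~\ref{HAdmissibleCond}(iii) explicitly allows the paired element $g$ to differ from $f$, so hyperbolic pairs are acceptable. Over $K=\C$, the case the paper actually uses, your orthonormal claim is of course correct.
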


\begin{proof} By assumption and Lemma \ref{keylem1}, we have $\lam'=\lam$ and $\tau(\theta_\lam)\neq 0$ for all $\lam\in\Lam_0$. We may choose $\theta_\lam$s such that $\tau(\theta_\lam)=1$ for all $\lam\in\Lam_0$. By assumption, we have $P^\lam=\bigoplus_{i=0}^{d_\lam}P^\lam_i$ such that $P^\lam_{d_\lam}=\soc(P^\lam)\cong L^\lam\< d_\lam\>$ for all $\lam\in\Lam_0$.

For $\lam, \mu\in\Lam_0$ and $k\in \Z$ satisfying $\Hom_A(P^\lam,P^\mu)_k\not=0$, we claim that $d_\lam= d_\mu$. Let $f\in \Hom_A(P^\lam,P^\mu)_k$ be a nonzero homomorphism. By Lemma~\ref{G-keylemma0},  there exist $g\in\Hom_A(P^{\mu},P^{\lam})_{d_\lam-k}$ and
$h\in\Hom_A(P^{\mu},P^\lam)_{d_\mu-k}$ such that $gf=\theta_\lam$ and $fh=\theta_{\mu}$. Therefore $\tau(fg)=\tau(gf)=\tau(\theta_\lam)\not=0$ and $\tau(hf)=\tau(fh)=\tau(\theta_\mu)\not=0$ by Lemma~\ref{keylem1}. Hence $fg\not=0\not= hf$, and
\begin{equation*}
 d_\lam=\deg(gf)=\deg(fg)\le d_\mu,\quad\text{and}\quad d_\mu=\deg(fh)=\deg(hf)\le d_\lam.
\end{equation*}
This implies $d_\lam= d_\mu$ and $\Hom_A(P^\mu,P^\lam)_{d_\lam-k}\not=0$.

Now we will show that there are bases of $\Hom_A(P^\lam,P^\mu)_k$ and $\Hom_A(P^\mu,P^\lam)_{d_\lam-k}$ satisfying the conditions (ii) and (iii) of Definition~\ref{HAdmissibleCond} for the case $\Hom_A(P^\mu,P^\lam)_{d_\lam-k}\not=\Hom_A(P^\lam,P^\mu)_k\not=0$. By Lemma~\ref{G-keylemma0}, there is a non-degenerate pairing from $\Hom_A(P^\lam,P^\mu)_k\times\Hom_A(P^\mu,P^\lam)_{d_\lam-k}$ to $K\theta_\mu$ defined by $(f, g)$ sending to $fg$. Then we have $\dim\,\Hom_A(P^\lam,P^\mu)_k=\dim\,\Hom_A(P^\mu,P^\lam)_{d_\lam-k}$, and a basis $\{f_1\cdots, f_m\}$ of $\Hom_A(P^\lam,P^\mu)_k$ and a basis $\{g_1\cdots, g_m\}$ of $\Hom_A(P^\mu,P^\lam)_{d_\lam-k}$ such that $f_ig_j=\delta_{ij}\theta_\mu$ for all $i,j$. Since $P^\lam_{d_\lam}=\soc(P^\lam)$ and the degree of $g_jf_i$ equals $d_\mu=d_\lam$ for all $i,j$, we have $g_jf_i\in K\theta_\lam$ for all $i,j$. Let $g_jf_i=c_{ji}\theta_\lam$. Then $$c_{ji}=\tau(c_{ji}\theta_\lam)=\tau(g_jf_i)=\tau(f_ig_j)=\tau(\delta_{ij}\theta_\mu)=\delta_{ij}.$$ Therefore $f_ig_j=\delta_{ij}\theta_\mu$ and $g_jf_i=\delta_{ij}\theta_\lam$ for all $i,j$.

Now we consider the case $\Hom_A(P^\lam,P^\mu)_k=\Hom_A(P^\mu,P^\lam)_{d_\lam-k}$ with $\Hom_A(P^\lam,P^\mu)_k\not=0$. Then we have $\lam=\mu$, $k={d_\lam-k}$ and a non-degenerate pairing from $\Hom_A(P^\lam,P^\lam)_k\times\Hom_A(P^\lam,P^\lam)_k$ to $K\theta_\lam$ defined by $(f, g)$ sending to $fg$. By some standard arguments, there is a basis $\{f_1\cdots, f_m\}$ of $\Hom_A(P^\lam,P^\lam)_j$ such that for each $j$ there is unique $j'$ satisfying $f_jf_{j'}=\theta_\lam$ and $f_jf_{i}=0$ for all $i\not=j'$. Using the same argument as above, we have $f_{j'}f_j=f_jf_{j'}=\theta_\lam$ and $f_if_j=f_jf_i=0$ for $i\not=j'$.

Taking the union of all bases obtained from above, we get an admissible basis of $B$.
\end{proof}

\begin{rem} If $A$ is a positively graded algebra such that $A_0$ is a semisimple $K$-algebra, then the assumption that $P^\lam_{d_\lam}=\soc(P^\lam)$ for all $\lam\in\Lam_0$ automatically holds.
\end{rem}

The following corollary is a consequence of Proposition~\ref{keylem3} and Proposition~\ref{homogekeylem2}.

\begin{cor}\label{symm-eq} Assume $A$ is a positively graded algebra such that
$P^\lam_{d_\lam}=\soc(P^\lam)\simeq L^\lam\<d_\lam\>$ for each $\lam\in\Lam_0$. Then $B$ is a symmetric algebra if and only if there exists an admissible $K$-basis of $B$.
\end{cor}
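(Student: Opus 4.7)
The plan is simply to assemble the two preceding propositions, whose hypotheses coincide exactly with the standing assumption that $A$ is positively graded and $P^\lam_{d_\lam}=\soc(P^\lam)\simeq L^\lam\<d_\lam\>$ for every $\lam\in\Lam_0$.

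For the ``if'' direction, assume an admissible $K$-basis $\Upsilon$ of $B$ exists. Under the present hypotheses $\soc(P^\mu)\simeq L^\mu\<d_\mu\>$ for each $\mu\in\Lam_0$, so Proposition~\ref{keylem3} applies and tells us that the canonical form $\tr$ attached to $\Upsilon$, defined by \eqref{tr0}, is a symmetrizing form on $B$. In particular $B$ is a symmetric $K$-algebra.

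For the ``only if'' direction, assume $B$ carries a symmetrizing form $\tau$. The hypothesis $P^\lam_{d_\lam}=\soc(P^\lam)\cong L^\lam\<d_\lam\>$ for every $\lam\in\Lam_0$ is precisely what is required in Proposition~\ref{homogekeylem2}, whose conclusion is the existence of an admissible basis of $B$.

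There is no real obstacle to overcome here: the content of the corollary has already been packaged into Propositions~\ref{keylem3} and~\ref{homogekeylem2}, and this proof amounts only to observing that the admissibility hypotheses of both propositions follow from the single standing assumption of the corollary, so they may be cited in tandem.
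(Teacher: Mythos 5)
Your proof is correct and takes exactly the same approach as the paper, which simply states that the corollary is a consequence of Propositions~\ref{keylem3} and~\ref{homogekeylem2}. You have merely spelled out which proposition supplies which direction of the equivalence, which is a fair (if slightly verbose) way to say the same thing.
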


\begin{dfn} \label{HomoeneousTraceForm}Let $R$ be a graded  $K$-algebra. A form $\tau: R\rightarrow K$ is called homogeneous if $\tau$ is a homogeneous map, where $K$ is regarded as a graded  vector space concentrated in degree $0$. If $\tau: M\rightarrow K$ is a homogeneous form, then the associated bilinear form $(-,-)_{\tau}$ is also called a homogeneous bilinear form on $R$.
\end{dfn}

For a projective-injective $A$-module $Q$, $Q$ is gradable and $Q\simeq\bigoplus_{\mu\in\Lam_0}(P^{\mu})^{\oplus k_\mu}$ for some $k_\mu\in\Z_+$ such that not all $k_\mu$s are zero.

\begin{cor} \label{kmu} Assume that $A$ is a positively graded algebra and $P^\lam_{d_\lam}=\soc(P^\lam)\simeq L^\lam\<d_\lam\>$ for each $\lam\in\Lam_0$.
Let $Q$ be any projective-injective $A$-module. If $B$ is a symmetric algebra over $K$, then  $\End_{A}(Q)$ is a symmetric algebra over $K$. Moreover, if
the projective-injective $A$-module $Q=\bigoplus_{\mu\in\Lam_0}(P^{\mu})^{\oplus k_\mu}$ such that $d_\mu=d$ for all $k_\mu\not=0$, then there is a homogeneous symmetrizing form on $\End_{A}(Q)$ of degree $-d$.
\end{cor}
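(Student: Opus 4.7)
The plan is to transfer the symmetric structure from $B$ to $\End_A(Q)$ by lifting an admissible basis via a matrix-unit construction, then applying Proposition~\ref{keylem3}. Since $B$ is symmetric by hypothesis, Lemma~\ref{keylem1}(i) gives $\lam' = \lam$ for every $\lam \in \Lam_0$, and Corollary~\ref{symm-eq} supplies an admissible $K$-basis $\Upsilon$ of $B$. The strategy is purely structural: once an admissible basis of $\End_A(Q)$ is in hand, Proposition~\ref{keylem3} does all the remaining work.

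Write $Q \simeq \bigoplus_{s=1}^n P^{\mu_s}$ with $\mu_s \in \Lam_0$ (repetitions allowed so that $P^\mu$ occurs with multiplicity $k_\mu$). This identifies $\End_A(Q)$ with the matrix algebra $\bigoplus_{s,t=1}^n \Hom_A(P^{\mu_s}, P^{\mu_t})$, spanned by matrix units $E_{st}(f)$ (the endomorphism of $Q$ equal to $f$ on the $s$-th summand and zero on all other summands) satisfying $E_{st}(f) \cdot E_{uv}(g) = \delta_{tu} E_{sv}(gf)$. I would then set
\[
\Upsilon' := \bigl\{E_{st}(f) \bigm| 1 \le s, t \le n,\ f \in \Upsilon \cap \Hom_A(P^{\mu_s}, P^{\mu_t})\bigr\},
\]
a homogeneous $K$-basis of $\End_A(Q)$. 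The ``theta'' maps of $\End_A(Q)$ attached to its indecomposable summands are precisely $E_{ss}(\theta_{\mu_s})$, $1 \le s \le n$, all of which belong to $\Upsilon'$ (using $\mu_s' = \mu_s$), so $\Upsilon'$ is appropriate in the sense of Definition~\ref{appropriate basis}. Admissibility of $\Upsilon'$ in the sense of Definition~\ref{HAdmissibleCond} transfers from that of $\Upsilon$: conditions (i) and (ii) are immediate from the fact that an $\Upsilon$-basis element of $\Hom_A(P^{\mu_s},P^{\mu_t})$ forces $d_{\mu_s}=d_{\mu_t}$; condition (iii) reduces to the matrix identity $E_{st}(f) \cdot E_{ts}(\tilde{f}) = c E_{ss}(\theta_{\mu_s})$ (and symmetrically on the other side), where $\tilde{f}$ is the unique $\Upsilon$-partner of $f$. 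Proposition~\ref{keylem3} then yields that the canonical form $\tr'$ attached to $\Upsilon'$ is a symmetrizing form, so $\End_A(Q)$ is a symmetric algebra over $K$.

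For the homogeneous statement, if $d_{\mu_s} = d$ for every $s$ with $k_{\mu_s} \neq 0$, then by~\eqref{deg-theta} each $\theta_{\mu_s}$ is homogeneous of degree $d$; the canonical form $\tr'$ vanishes on every element of $\Upsilon'$ except the $E_{ss}(\theta_{\mu_s})$ (which are sent to $1$), so it is supported entirely on the degree-$d$ homogeneous component of $\End_A(Q)$, i.e., $\tr'$ is a homogeneous map of degree $-d$ in the sense of Definition~\ref{HomoeneousTraceForm}. The only real obstacle is the bookkeeping in checking condition (iii) of admissibility, which amounts to verifying that the matrix-position swap $(s,t) \mapsto (t,s)$ combined with the $\Upsilon$-partner involution produces a well-defined and genuinely unique partner for each matrix-unit basis element; this is a routine but careful check given the clean form of the multiplication rule for matrix units.
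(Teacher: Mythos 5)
Your proposal follows essentially the same route as the paper's own proof. The paper also decomposes $Q = \bigoplus_{\mu\in\Lam_0}(P^{\mu})^{\oplus k_\mu}$, writes $\End_A(Q)$ in matrix-unit form via the isomorphisms $f \mapsto f^{(s,t)} \colon \Hom_A(P^{\mu},P^{\lam}) \to \Hom_A\bigl((P^{\mu})^{(s)},(P^{\lam})^{(t)}\bigr)$, defines the form $\widehat{\tr}$ to vanish off the diagonal (i.e.\ unless $s=t$) and to restrict to $\tr$ on each diagonal block, and then invokes Propositions~\ref{homogekeylem2}, \ref{keylem3} and Remark~\ref{rem-Adm} to conclude that $\widehat{\tr}$ is a symmetrizing form; this is exactly your lifted admissible basis $\Upsilon'$ and the canonical form $\tr'$ attached to it. One small remark: your degree bookkeeping is actually slightly more careful than the paper's, which in the body of the proof asserts degree $d$ rather than $-d$ (with $K$ concentrated in degree $0$, a form supported in degree $d$ is a homogeneous map of degree $-d$, as you correctly observe and as the corollary's statement records). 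A second small remark, applying to your write-up and the paper's equally: Definition~\ref{HAdmissibleCond} and Proposition~\ref{keylem3} are stated for the \emph{basic} projective-injective module, whereas $Q$ may carry multiplicities $k_\mu>1$, so one is implicitly extending the admissibility formalism (or directly checking symmetry and non-degeneracy of the diagonal-supported form, which is what the matrix identities you quote amount to); this is the ``routine but careful check'' you flag, and the paper glosses it in the same way.
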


\begin{proof} By assumption and Lemma \ref{keylem1}, we have $\lam'=\lam$ and $\tau(\theta_\lam)\neq 0$ for all $\lam\in\Lam_0$. By Proposition~\ref{homogekeylem2} and Proposition~\ref{keylem3}, there exists an admissible basis of $B$ and $\tr$ is a symmetrizing form on $B$.

 Let $Q=\bigoplus_{\mu\in\Lam_0}(P^{\mu})^{\oplus k_\mu}$ for some $k_\mu\in\Z_+$
 and let $\widehat{B}:=\End_{A}\bigl(Q\bigr)$. Since $P^{\mu}$ is a  graded $A$-module for each $\mu\in \Lam_0$, $\widehat{B}$ is a positively graded $K$-algebra. For $\mu\in \Lam_0$ and  $s\in \Z_{> 0}$ such that $1\le s\le k_\mu$, let $(P^{\mu})^{(s)}$ denote the $s$-th component of $(P^{\mu})^{\oplus k_\mu}$. We regard that $(P^{\mu})^{(s)}\subseteq (P^{\mu})^{\oplus k_\mu}\subseteq Q$ and $\Hom_A\bigl((P^{\mu})^{(s)}, (P^{\lam})^{(t)}\bigr)\subseteq \widehat{B}$ in the natural way for $\mu, \lam\in \Lam_0$, $1\le s\le k_\mu$ and $1\le t\le k_\lam$. For $\mu, \lam\in \Lam_0$, $1\le s\le k_\mu$ and $1\le t\le k_\lam$, there is an isomorphism of $K$-vector space from $\Hom_A\bigl(P^{\mu}, P^{\lam}\bigr)$ to $\Hom_A\bigl((P^{\mu})^{(s)}, (P^{\lam})^{(t)}\bigr)$ by sending $f$ to $f^{(s,t)}$ defined in the obvious way. Define a form on $\widehat{B}$ determined by
\begin{equation*}
\widehat{\tr}(f):=\begin{cases} \tr(g), &\text{if $f\in \Hom_A\bigl((P^{\mu})^{(s)}, (P^{\lam})^{(t)}\bigr)$ such that $f=g^{(s,t)}$ with $s=t$;}\\
0, &\text{if $f\in \Hom_A\bigl((P^{\mu})^{(s)}, (P^{\lam})^{(t)}\bigr)$ with $s\not=t$.}
\end{cases}
\end{equation*}
It is clear that $\widehat{\tr}$ is a symmetrizing form on $\widehat{B}$ by Proposition~\ref{homogekeylem2}, Proposition~\ref{keylem3} and Remark~\ref{rem-Adm}. Finally, it is obvious that $\widehat{\tr}$ is a homogeneous linear map on $\End_{A}(Q)$ of degree $d$ if $d_\mu=d$ for all $k_\mu\not=0$.
\end{proof}

%%%%%%%%%%%%%%%%%%%%%%%%%%%%%%%%%%%%%%%%%%%%%%%%

For the rest of this section, we assume that $A$ is a positively graded $K$-algebra equipped with a homogeneous anti-involution $\star$ of degree $0$ satisfying  $(L^\lam)^{\circledast}\simeq L^\lam$ (defined below) for each $\lam\in\Lam$.
%Let $*$ denote the anti-involution $\star$ of  $A$ without considering the gradation.
The dual of the graded $A$-module $M$ is the graded $A$-module
$$
M^\circledast =\bigoplus_{j\in\Z}M^\circledast_j , \qquad \text{where $M^\circledast_j:=\Hom_K(M_{-j},K)$,}
$$
and the action of $A$ on~$M^\circledast$ is given by $(af)(m)=f(a^\star m)$ for all $f\in M^\circledast$, $a\in A$ and~$m\in M$. It is clear that
\begin{equation}\label{gr-D}
(M^\circledast)^\circledast\simeq M\quad \text{and }\quad  (M\<k\>)^\circledast\simeq M^\circledast\<-k\>\quad \text{$\forall\,M\in A\gmod, k\in\Z$}.
\end{equation}
Also $\circledast$ gives an equivalence of categories. Recall the graded simple module $L^\lam\in A\gmod$ is concentrated in degree zero. Therefore $(L^\lam)^{\circledast}$ is a graded simple module concentrating in degree zero for every $\lam\in\Lam$. The assumptions say
 \begin{equation}\label{re-D-L}
  (L^\lam)^{\circledast}\simeq L^\lam\quad \text{for each $\lam\in\Lam$}.
  \end{equation}
  and hence $L^\lam\<k\>^{\circledast}\simeq L^\lam\<-k\>$ for each $\lam\in\Lam$ and $k\in\Z$.

 \begin{lem}\label{degP} Assume that $A$ is a positively graded $K$-algebra equipped with a homogeneous anti-involution $\star$ of degree $0$ satisfying  $(L^\lam)^{\circledast}\simeq L^\lam$ for each $\lam\in\Lam$. Assume further that $L^\lam\<d_\lam\>\simeq\soc(P^\lam)\subseteq P^\lam_{d_\lam}$ for each $\lam\in\Lam_0$. Then we have
 \begin{equation}\label{gr-dual-proj}
  (P^{\lam})^{\circledast}\simeq P^\lam\<-d_\lam\> \quad\text{for each $\lam\in\Lam_0$.}
\end{equation}
\end{lem}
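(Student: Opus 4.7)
The plan is to exploit that the graded duality $\circledast$ is a contravariant self-equivalence of $A\gmod$ which interchanges projective and injective modules, combined with the standard formulas relating head and socle under dualization.

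First, I would observe that for any $M\in A\gmod$, applying the exact contravariant equivalence $\circledast$ to a surjection $M\twoheadrightarrow \head(M)$ yields an embedding $(\head M)^\circledast\hookrightarrow M^\circledast$ whose image is the largest semisimple submodule, so $\soc(M^\circledast)\simeq (\head M)^\circledast$. Dually (by applying $\circledast$ to $\soc(M)\hookrightarrow M$, or by using $(M^\circledast)^\circledast\simeq M$ in (\ref{gr-D})), one gets $\head(M^\circledast)\simeq (\soc M)^\circledast$.

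Next, I would specialize to $\lam\in\Lam_0$. Since $P^\lam$ is projective-injective and $\circledast$ is a contravariant equivalence, $(P^\lam)^\circledast$ is again projective-injective, hence indecomposable with a unique simple head. Using the hypothesis $\soc(P^\lam)\simeq L^\lam\<d_\lam\>$ together with (\ref{re-D-L}) and the shift rule in (\ref{gr-D}), the head is computed as
\begin{equation*}
\head\bigl((P^\lam)^\circledast\bigr)\simeq \bigl(\soc P^\lam\bigr)^\circledast \simeq \bigl(L^\lam\<d_\lam\>\bigr)^\circledast \simeq (L^\lam)^\circledast\<-d_\lam\> \simeq L^\lam\<-d_\lam\>.
\end{equation*}
Therefore $(P^\lam)^\circledast$ is the graded projective cover of $L^\lam\<-d_\lam\>$, which by uniqueness of projective covers in $A\gmod$ is $P^\lam\<-d_\lam\>$, establishing (\ref{gr-dual-proj}).

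I do not anticipate any serious obstacle: the argument is entirely formal from the contravariant equivalence and the head/socle interchange, and the only thing requiring care is the bookkeeping of the grading shift, which is dictated by (\ref{gr-D}) together with the fact that $L^\lam$ is placed in degree $0$ while $\soc(P^\lam)$ sits in degree $d_\lam$.
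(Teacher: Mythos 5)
Your argument is correct and matches the paper's own proof in essence: both pass the hypothesis $\soc(P^\lam)\simeq L^\lam\<d_\lam\>$ through the contravariant graded duality $\circledast$ to obtain a degree-$0$ surjection $(P^\lam)^\circledast\twoheadrightarrow L^\lam\<-d_\lam\>$, and then conclude by identifying $(P^\lam)^\circledast$ as the graded projective cover of $L^\lam\<-d_\lam\>$. The only cosmetic difference is that you state the general head/socle interchange formula explicitly while the paper just dualizes the inclusion $L^\lam\<d_\lam\>\hookrightarrow P^\lam$ directly and invokes the ungraded projective-cover fact for $\For((P^\lam)^\circledast)$.
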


\begin{proof} $P_{d_\lam}^\lam$ is a graded $A$-submodule of $P^\lam$ since $A$ is positively graded. Therefore  $P_{d_\lam}^\lam$ contains the simple socle $\soc(P^\lam)$ of $P^\lam$.
 Now we show \eqref{gr-dual-proj}. There is an injective map $g\in\hom_A(L^\lam\<d_\lam\>,P^{\lam})$ since $\soc (P^\lam)\subseteq P^\lam_{d_\lam}$. Therefore there is a surjective map $h\in \hom_A\bigl((P^{\lam})^\circledast,L^{\lam}\<-d_\lam\>\bigr)$ because $(L^{\lam}\<d_\lam\>)^\circledast\simeq L^{\lam}\<-d_\lam\>$. Now $(P^{\lam})^{\circledast}\simeq P^\lam\<-d_\lam\>$ follows from the fact that $\For((P^{\lam})^{\circledast})$ is a projective cover of $L^\lam$ in $A\mod$.
\end{proof}
%%%%%%%%%%%%%%%%%%%%%%%%%%%%%%%

\begin{dfn}\label{hwCat}\text{(\cite{CPS:HomDual, Maksimau:QuiverSchur})} Let $A$ be a finite-dimensional positively graded  algebra over an algebraically closed field $K$. Let $\mathcal{C}:=A\gmod$ denote the category of graded finite-dimensional $A$-modules. Let $\star$ be a homogeneous anti-involution of degree zero of $A$ which induces a graded duality functor $\circledast$ on $\mathcal{C}$. Let $(\Lam,\leq)$ be a finite poset and let $\Delta:=\{\Delta^\lam|\lam\in\Lam\}$ be a family of objects in $\mathcal{C}$. The pair $(\mathcal{C},\Delta)$ is called a $\Z$-graded highest weight category with a duality functor $\circledast$ if \begin{enumerate}
%\item[(i)] for each $M\in\mathcal{C}$ and $k\in\Z$, $(M\<k\>)^{\circledast}\cong M^{\circledast}\<-k\>$;
\item[(i)] for each $\lam\in\Lam$, $\Delta^\lam$ has a unique simple head $L^\lam$ satisfying $(L^\lam)^{\circledast}\cong L^\lam$, and $\{L^\lam\<k\>|\lam\in\Lam,k\in\Z\}$ is a complete set of pairwise non-isomorphic simple modules in $\mathcal{C}$;
\item[(ii)] $\Hom_{\mathcal{C}}(\Delta^\lam,M)=0$ for each $\lam\in\Lam$ implies that $M=0$;
\item[(iii)] for each $\lam\in\Lam$, there is an indecomposable graded projective module $P^\lam$ in $\mathcal{C}$ and a degree $0$ surjective homomorphism $f: P^\lam\twoheadrightarrow\Delta^\lam$ such that $\Ker(f)$ has a finite filtration whose successive quotient are objects of the form $\Delta^\mu\<k\>$ with $\mu>\lam$ and $k\in\Z$;
\item[(iv)] for each $\lam\in\Lam$, we have $\End_{\mathcal{C}}(\Delta^\lam)\cong K$;
\item[(v)] for each $\lam,\mu\in\Lam$ such that $\Hom_{\mathcal{C}}(\Delta^\lam,\Delta^\mu)\neq 0$, we have $\lam\leq\mu$.
\end{enumerate}
\end{dfn}

Let $v$ be an indeterminate over $\Z$. For $M\in A\gmod$ and a graded simple $A$-module $L$, the graded dimension of $M$ is the Laurent polynomial
\begin{equation}\label{E:GrDim} \dim_v M:=\sum_{d\in\Z}(\dim_K M_d)\,v^d ,
\end{equation}
and
the graded multiplicity of $L$ in $M$ is the Laurent polynomial
\begin{equation}\label{E:multiplicity}
[M:L]_v := \sum_{k\in\Z}[M:L\<k\>]v^k.
\end{equation}
For any $\lam,\mu\in\Lam$, the graded Cartan matrix $c_{\lam,\mu}(v)$ is defined by
 $$
c_{\lam,\mu}(v):=\dim_v\Hom_{A}(P^\mu,P^\lam)=\sum_{k\in\Z}[P^\lam:L^\mu\<k\>]v^k .
$$

Every finite-dimensional $K$-algebra $R$ has a unique decomposition into a direct sum of indecomposable blocks. A block of $R$ means an indecomposable two-sided ideal of $R$.  It is well known that the equivalence relation on the simple $R$-modules induced from the block decomposition of $R$, where two simples are equivalent if they belong to the same block, coincides
with the linkage classes of simple $R$-modules, where the equivalence relation is
generated by $L^\lam\sim L^\mu$ if $\Ext^1(L^\lam, L^\mu)\not= 0$ or $\Ext^1(L^\mu, L^\lam)\not= 0$, where $L^\lam$ and $L^\mu$ are simple $R$-modules. Also the linkage classes of simple $R$-modules coincides with the equivalence relation generated by $L^\lam\sim L^\mu$ if $\Hom_R(P^\lam, P^\mu)\not= 0$ or $\Hom_R(P^\mu, P^\lam)\not= 0$, where $P^\lam$ and $P^\mu$ are the projective covers of the simple $R$-modules $L^\lam$ and $L^\mu$, respectively. We also let $\Hom_{-R}(U,V)$ denote the $K$-vector space of $R$-homomorphisms from the right $R$-module $U$ to the right $R$-module $V$.

\begin{prop} \label{keylem4} Let $A$ be a positively graded algebra equipped with a homogeneous anti-involution $\star$ of degree $0$ such that $\soc(P^\lam)\simeq L^\lam\<d_\lam\>$ for all $\lam\in\Lam_0$.
Assume that either $(A,\star)$ is a $\Z$-graded cellular algebra in the sense of \cite{HuMathas:GradedCellular} or $(A\gmod,\circledast)$ is a $\Z$-graded highest weight category with a duality functor.
If there is a homogeneous idempotent $e\in A$ of degree zero satisfying $Ae\simeq\oplus_{\lam\in\Lam_0}P^\lam$ and $e^\star=e$, and there is a double centralizer property between $A$ and $B^{\mathrm{o}}:=\Bigl(\End_{A}(Ae)\Bigr)^{\mathrm{op}}$ on $Ae$ (i.e., the canonical map
$A\rightarrow\End_{-{B}^{\mathrm{o}}}(Ae)$ is an isomorphism),
 then $d_\lam=d_\mu$ for any $\lam,\mu\in\Lam_0$ such that $L^\lam$ and $L^\mu$ belong to the same block.
\end{prop}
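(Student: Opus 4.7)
My plan is to transfer the block structure of $A$ to $B^{\mathrm{o}} \cong eAe$ via the double centralizer property, and then to use Lemma~\ref{G-keylemma0} to deduce $d_\lam = d_\mu$ whenever $\lam,\mu \in \Lam_0$ are directly linked by a nonzero Hom of projective-injectives. Write $A = \bigoplus_i A_i$ for the block decomposition of $A$ with central idempotents $\varepsilon_i$, and set $e_{(i)} := \varepsilon_i e \in A_i$, so that $B^{\mathrm{o}} \cong eAe = \bigoplus_i e_{(i)} A_i e_{(i)}$ is a direct sum of two-sided ideals. My first aim is to show that each nonzero summand $e_{(i)} A_i e_{(i)}$ is indecomposable as a $K$-algebra, hence a block of $B^{\mathrm{o}}$; this will imply that $\lam,\mu \in \Lam_0$ lie in the same $A$-block if and only if they lie in the same $B^{\mathrm{o}}$-block. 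Suppose for contradiction that $e_{(i)} A_i e_{(i)} = R_1 \times R_2$ with central orthogonal idempotents $f_1, f_2$ summing to $e_{(i)}$. Since distinct blocks of $A$ annihilate each other, only $e_{(i)} A_i e_{(i)}$ acts non-trivially on the right $B^{\mathrm{o}}$-module $A_i e_{(i)}$, and $A_i e_{(i)} = A_i f_1 \oplus A_i f_2$ with $A_i f_k$ a right $R_k$-module; using that $f_k$ annihilates $A_i f_{3-k}$, one obtains vanishing of cross-Homs and an algebra isomorphism
\[
\End_{B^{\mathrm{o}}}\bigl(A_i e_{(i)}\bigr) \cong \End_{R_1}(A_i f_1) \times \End_{R_2}(A_i f_2).
\]
By the double centralizer property (applied block-wise via the analogous vanishing $\Hom_{B^{\mathrm{o}}}(A_i e_{(i)}, A_j e_{(j)}) = 0$ for $i \neq j$), this is identified with the block $A_i$, which is indecomposable; hence one factor must be zero, forcing $f_1 = 0$ or $f_2 = 0$.

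For $\lam,\mu \in \Lam_0$ in the same $A$-block, the block correspondence above puts the corresponding primitive idempotents $e_\lam, e_\mu$ in the same block of $B^{\mathrm{o}}$, producing a chain $\lam = \lam_0, \lam_1, \dots, \lam_n = \mu$ in $\Lam_0$ such that for each consecutive pair either $\Hom_A(P^{\lam_j}, P^{\lam_{j+1}}) \neq 0$ or $\Hom_A(P^{\lam_{j+1}}, P^{\lam_j}) \neq 0$, via the canonical identification $\Hom_{B^{\mathrm{o}}}(e_\lam B^{\mathrm{o}}, e_\mu B^{\mathrm{o}}) \cong e_\mu A e_\lam \cong \Hom_A(P^\lam, P^\mu)$. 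By transitivity it suffices to show that a single such link forces $d_\lam = d_\mu$. Take $0 \neq f \in \Hom_A(P^\lam, P^\mu)_j$; the socle hypothesis $\soc(P^\nu) \simeq L^\nu\<d_\nu\>$ gives $\lam' = \lam$ and $\mu' = \mu$, so Lemma~\ref{G-keylemma0} supplies $0 \neq h \in \Hom_A(P^\mu, P^\lam)_{d_\mu - j}$ with $fh = \theta_\mu$; reapplying the lemma to the nonzero $h$ produces $0 \neq H \in \Hom_A(P^\lam, P^\mu)_{j + (d_\lam - d_\mu)}$ with $hH = \theta_\lam$. Iterating, $\Hom_A(P^\lam, P^\mu)_{j + n(d_\lam - d_\mu)} \neq 0$ for every $n \geq 0$; since $A$ is positively graded and this Hom-space is finite-dimensional, we must have $d_\lam = d_\mu$.

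The main obstacle I expect is the block-correspondence step, where one must verify carefully that the product decomposition of $e_{(i)} A_i e_{(i)}$ lifts to a genuine algebra-level decomposition of $\End_{B^{\mathrm{o}}}(A_i e_{(i)})$; this requires the vanishing of the cross-Homs $\Hom_{B^{\mathrm{o}}}(A_i f_1, A_i f_2)$, which follows from the centrality and orthogonality of $f_1, f_2$ in the relevant block of $B^{\mathrm{o}}$. I note that the cellular/highest-weight hypothesis does not seem to enter the above argument directly; its apparent role is to supply a natural framework guaranteeing the existence of the anti-involution and the socle identification $\soc(P^\lam) \simeq L^\lam\<d_\lam\>$, which are the only ingredients actually used.
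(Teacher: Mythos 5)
Your proof is correct, and the key step is genuinely different from (and in one respect stronger than) the one in the paper. Both proofs share the reduction: use the double centralizer property to show that the blocks of $A$ biject with the blocks of $B^{\mathrm{o}}\cong eAe$, so that two elements of $\Lam_0$ in the same $A$-block are connected by a chain \emph{inside} $\Lam_0$ with nonzero $\Hom$'s between consecutive projectives; this reduces the claim to showing $d_\lam=d_\mu$ whenever $\Hom_A(P^\lam,P^\mu)\neq 0$. (The paper runs this reduction through the Schur functor $F$ and full faithfulness on projectives, citing \cite[Corollary~2.19]{HuMathas:WeylSocle}; your block-idempotent version is more informal but arrives at the same conclusion, and the vanishing of cross-$\Hom$'s you invoke does hold since the block idempotents $\varepsilon_i$ of $A$ are central.)

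Where you diverge is the final step. The paper uses the $\Z$-graded cellular (resp.\ highest-weight) structure to establish the graded Cartan symmetry $c_{\lam,\mu}(v)=c_{\mu,\lam}(v)$, then combines this with $(P^\lam)^{\circledast}\simeq P^\lam\langle -d_\lam\rangle$ from Lemma~\ref{degP} to force $v^{d_\mu-d_\lam}=1$. You instead iterate Lemma~\ref{G-keylemma0}: from $0\neq f\in\Hom_A(P^\lam,P^\mu)_j$ you obtain $0\neq h\in\Hom_A(P^\mu,P^\lam)_{d_\mu-j}$ with $fh=\theta_\mu$, and reapplying the lemma to $h$ gives $0\neq H\in\Hom_A(P^\lam,P^\mu)_{j+(d_\lam-d_\mu)}$; hence $\Hom_A(P^\lam,P^\mu)_{j+n(d_\lam-d_\mu)}\neq 0$ for all $n\geq 0$. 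Because $B$ is positively graded and $P^\lam$ is generated in degree $0$, these $\Hom$-spaces are concentrated in degrees $0\leq k\leq\min(d_\lam,d_\mu)$, so $d_\lam\neq d_\mu$ is impossible. (Equivalently, take $j$ maximal and $j$ minimal to obtain $d_\lam\leq d_\mu$ and $d_\mu\leq d_\lam$ in one step each.) This argument uses only positive gradedness, the socle hypothesis $\soc(P^\lam)\simeq L^\lam\langle d_\lam\rangle$ (which gives $\lam'=\lam$ and makes Lemma~\ref{G-keylemma0} applicable), and the double centralizer property for the chain; the cellular/highest-weight hypothesis plays no role. You are right to flag this — your route proves a strictly more general statement, eliminating a hypothesis that the paper's Cartan-symmetry argument genuinely needs.
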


\begin{proof} We will follow the definitions from \cite{HuMathas:WeylSocle, Maksimau:QuiverSchur} closely in the proof of the proposition.  Note that $B^{\mathrm{o}}\simeq eAe$ since $e$ is a homogeneous idempotent of degree zero. Let $B'=eAe$, and let $A=\bigoplus^m_{i=1}A_i$ and $B'=\bigoplus^n_{j=1}B'_j$ be the decompositions of $A$ and $B'$ into indecomposable blocks, respectively. Let $e_1,\cdots,e_m$ be the block idempotents of $A$.
Let $F: A\mod\rightarrow {B}^{\mathrm{o}}\mod$ be the Schur functor defined by: $F(M):=eM$, $F(f): ev\mapsto ef(v)$, for all $M,N\in A\mod, f\in\Hom_A(M,N), v\in M$. We first show that $F$ induces a bijection between the indecomposable blocks of $A$ and $B'$. The proof is the same as \cite[Corollary 2.19]{HuMathas:WeylSocle}. For completeness, we include it here.

First we will show $m=n$. Let $Q:=Ae$ and $Q_j:=e_jQ$, for $1\le j\le m$. Since $A\simeq\End_{-B'}(Ae)$, $Ae$ is a faithful left $A$-module. Then $Q_r\ne0$ for $1\le r\le m$, so $Q=\bigoplus^m_{i=1}Q_i$ is the decomposition of~$Q$ into its block components. In particular,
$\Hom_A(Q_r,Q_s)=0$ if $r\ne s$.
  Therefore, ${B}^{\mathrm{o}}=\End_A(Q)^{\mathrm{op}}\simeq\bigoplus^m_{i=1}\End_A(Q_i)^{\mathrm{op}}$ is a decomposition of~${B}^{\mathrm{o}}$ into (not necessarily indecomposable) blocks.  In particular, $n\ge m$.

On the other hand, the assumption that $e^\star=e$ and the isomorphism $A\simeq\End_{-B'}(Ae)$ imply that the canonical map \begin{equation}\label{psip}
A^{\mathrm{op}}\rightarrow\End_{B'}(eA) .
\end{equation}
is an isomorphism too, which implies the Schur functor $F$ is fully faithful on projectives. This means
 \begin{equation}\label{psip2}
\Hom_A(P^\lam,P^\mu)\simeq\Hom_{B'}(eP^\lam, eP^\mu) \quad\text{ for all $\lam,\mu\in\Lam$.}
\end{equation}
In particular, $Y^\lam:=eP^\lam$ is an indecomposable $B'$-module for all $\lam\in\Lam$. Note that $\{F(L^\lam)\,|\, \lam\in\Lam_0\}$ is a complete list of non-isomorphic simple $B'$-modules
 and $Y^\lam$ is the projective cover of simple $B'$-module $F(L^\lam)$ for all $\lam\in\Lam_0$.
 For $\lam,\mu \in\Lam_0$ such that $L^\lam$ and $L^\mu$ belong to the same block of $A$, there exists a sequence $\lam=\gamma_1, \gamma_2,\cdots,\gamma_l=\mu$ of elements in $\Lam$ such that $\Hom_A(P^{\gamma_{j}},P^{\gamma_{j+1}})\not=0$ or $\Hom_A(P^{\gamma_{j+1}},P^{\gamma_{j}})\not=0$ for all $1\leq j\leq l-1$ from the discussion above this proposition.
Since $Y^\gamma$ are indecomposable $B'$-modules for all $\gamma\in\Lam$, $Y^{\gamma_{j}}$ and $Y^{\gamma_{j+1}}$ are in the same block of $B'$ for all $1\leq j\leq l-1$ by \eqref{psip2}. Therefore $F(L^\lam)$ and $F(L^\mu)$ belong to the same block of $B'$. This implies $n\le m$ and hence $m=n$. Also we obtain that the functor $F$ induces a bijection between the indecomposable blocks of~$A$ and~$B'$.

Let $\lam,\mu\in\Lam_0$ such that $P^\lam$ and $P^\mu$ belong to the same block of $A$. Now we are going to show that $d_\lam=d_\mu$. From above, the projective modules $Y^\lam$ and $Y^\mu$ belong to the same block of ~$B'$.
From the discussion above this proposition, there exists a sequence $\lam=\gamma_1, \gamma_2,\cdots,\gamma_l=\mu$ of elements in $\Lam_0$ such that $\Hom_{B'}(Y^{\gamma_{j}},Y^{\gamma_{j+1}})\not=0$ or $\Hom_{B'}(Y^{\gamma_{j+1}},Y^{\gamma_{j}})\not=0$ for all $1\leq j\leq l-1$ since $\{Y^\lam|\lam\in\Lam_0\}$ is a complete set of non-isomorphic indecomposable projective $B^\mathrm{o}$-modules.
By \eqref{psip2}, we have $\Hom_A(P^{\gamma_{j}},P^{\gamma_{j+1}})\not=0$ or $\Hom_A(P^{\gamma_{j+1}},P^{\gamma_{j}})\not=0$ for all $1\leq j\leq l-1$. To show that $d_\lam=d_\mu$, it suffices to show that $d_{\lam_j}=d_{\lam_{j-1}}$ for all $1\leq j\leq l-1$. Therefore it is enough to show $d_\lam=d_\mu$ when $\Hom_{A}(P^\mu,P^\lam)\neq 0$ and $\lam,\mu\in\Lam_0$.

In the case that $(A,\star)$ is a $\Z$-graded cellular algebra in the sense of \cite{HuMathas:GradedCellular}, by \cite[Theorem 2.17]{HuMathas:GradedCellular}, we have
$$
c_{\lam,\mu}(v)=\sum_{\gamma\in\Lam}[C^\gamma:L^\lam]_v[C^\gamma:L^\mu]_v=c_{\mu,\lam}(v),\quad\text{for all $\mu,\lam\in\Lam$,}
$$
where $C^\gamma$ is the $\Z$-graded cell module associated to $\gamma\in\Lambda$ and $[C^\gamma:L^\lam]_v$ defined in \eqref{E:multiplicity} is the graded multiplicities of $L^\lam$ in the graded module $C^\gamma$.

Now we assume that  $(A\gmod,\circledast)$ is a $\Z$-graded highest weight category with a duality functor $\circledast$ induced from $\star$. Then we have \eqref{gr-D} and \eqref{re-D-L}. By \cite[Corollary 2.16]{Maksimau:QuiverSchur}, we have that $$
(P^\lam:\Delta^\gamma)_v=[\Delta^\gamma:L^\lam]_v,\quad\forall\,\gamma,\lam\in\Lam ,
$$
where $(P^\lam:\Delta^\gamma)_v$ is defined similar to (\ref{E:multiplicity}) to be the graded filtration multiplicities of $\Delta^\gamma$ in $P^\lam$. It follows that
$$
c_{\lam,\mu}(v)=\sum_{\gamma}(P^\lam:\Delta^\gamma)_v[\Delta^\gamma:L^\mu]_v
=\sum_{\gamma}[\Delta^\gamma:L^\lam]_v[\Delta^\gamma:L^\mu]_v=c_{\mu,\lam}(v),\quad\text{for all $\mu,\lam\in\Lam$.}
$$

Therefore, in both cases, we have $c_{\lam,\mu}(v)=c_{\mu,\lam}(v)$ for all $\mu,\lam\in\Lam$. On the other hand, we have by \eqref{gr-dual-proj} that $$
\begin{aligned}
c_{\lam,\mu}(v)&=\dim_v\Hom_{A}(P^\mu,P^\lam)=\dim_v\Hom_{A}\bigl((P^\lam)^{\circledast},(P^\mu)^{\circledast}\bigr)
=\dim_v\Hom_{A}(P^\lam\<-d_\lam\>,P^\mu\<-d_\mu\>)\\
&=v^{-d_\mu+d_\lam}\dim_v\Hom_{A}(P^\lam,P^\mu)=v^{-d_\mu+d_\lam}c_{\mu,\lam}(v)=v^{-d_\mu+d_\lam}c_{\lam,\mu}(v),
\end{aligned}
$$
By assumption that $c_{\lam,\mu}(v)\neq 0$, thus we can deduce that $v^{d_\mu-d_\lam}=1$ and hence $d_\mu=d_\lam$ as required.\end{proof}

%\begin{cor} \label{Loewylength}

\bigskip

\section{Parabolic BGG category and the proof of Theorem \ref{mainthm1}}
In this section, we shall study the endomorphism algebra of any projective-injective module in parabolic BGG category $\O^\p$ over the field $\C$ of complex numbers. After recalling some  preliminarily results on parabolic BGG category $\O^\p$, we give a proof of Theorem \ref{mainthm1}. One of the key ingredient of the proof of Theorem \ref{mainthm1} is Proposition \ref{keyprop}.

Let $\g$ be a complex semisimple Lie algebra with a fixed Borel subalgebra $\b$ containing the Cartan subalgebra $\h$ and let $\O$ denote the corresponding Bernstein-Gelfand-Gelfand (BGG)  category \cite{Humphreys:BGG}. Let $\Phi$ be the root system of $\g$ relative to $\h$,  $\Delta$ the set of simple roots in $\Phi$ corresponding to $\b$ and $\Phi^+$ the set of positive roots in $\Phi$. Let $W$ be the Weyl group of $\g$ attached to the root system $\Phi$. An element $\lam\in\h^\ast$ is called a weight of $\g$. For any weight $\lam\in\h^\ast$ and $w\in W$, we define $w\cdot\lam:=w(\lam+\rho)-\rho$, where $\rho$ is the half-sum of positive roots in $\Phi$. For any $\lam\in\h^\ast$, let $L(\lam)\in\O$ denote the irreducible highest weight module with highest weight $\lam$.
A weight $\lam\in\h^\ast$ is said to be integral (resp. dominant) if $\<\lam,\alpha^\vee\>\in\Z$ (resp. $\<\lam+\rho,\alpha^\vee\>\geq 0$) for any $\alpha\in\Delta$, where $\alpha^\vee$ denotes the coroot of $\alpha$.
A dominant integral weight $\lam$ is called regular if $\<\lam+\rho,\alpha^\vee\>> 0$ for any $\alpha\in\Delta$, otherwise $\lam$ is called singular. Let $\Lambda$ denote the set of integral weights of $\g$.

Let $I\subset\Delta$ be a subset of $\Delta$ which defines a root system $\Phi_I\subset \Phi$ with positive roots $\Phi_I^+\subset\Phi^+$ and negative roots $\Phi_I^-\subset\Phi^-$ and let $W_I$ be the Weyl group generated by all $s_\alpha$ with $\alpha\in I$. Associated with the root system $\Phi_I$ we have the standard parabolic subalgebra $\p:=\p_I\supseteq\b$ which has a Levi decomposition $\p_I=\mathfrak{l}_I\oplus \mathfrak{u}_I$, where $\mathfrak{l}_I:=\h\bigoplus\bigoplus_{\alpha\in\Phi_I}\g_\alpha$ is the Levi subalgebra of $\p$ and $\mathfrak{u}_I:=\bigoplus_{\alpha\in\Phi^+\setminus\Phi_I^+}\g_\alpha$ is the nilradical of $\p$. We define $\Lambda_\p^{+}:=\{\lam\in\Lambda|\<\lam,\alpha^\vee\>\geq 0,\,\text{for all $\alpha\in I$}\}$.
Let $\O^\p$ denote the corresponding parabolic BGG category \cite{Ro}.
For each $\lam\in\Lambda$, $L(\lam)$ lies in $\O^\p$ if and only if $\lam\in\Lambda_\p^+$. For each $\lam\in\Lambda$, let $\O_\lam$ denote the  subcategory of $\O$ whose composition factors are all of the form $L(w\cdot\lam)$ for some $w\in W$ and let $\O_{\lam}^{\p}:=\O_\lam\bigcap\O^\p$. For $\lam\in\Lambda_\p^+$, let $\Delta(\lam)$ denote the parabolic Verma module with highest weight $\lam$ and $P(\lam)$ denote the projective cover of $L(\lam)$ in $\O^\p$.

 For any dominant  integral weight $\psi$, let $W_\psi:=\{w\in W|w\cdot\psi=\psi\}$ denote the stabiliser of $\psi$ in the Weyl group $W$, let $W^\psi$ denote the set of maximal length left coset representatives of $W_\psi$ in $W$ and let
 $$
 {{\mathring{W}}^\psi}:=\{w\in W^\psi\,|\, w\cdot\psi\in\Lam_\p^+\}.
 $$
 The simple modules $L(\mu)$ in $\O_\psi^\p$ are parameterized by $\mu \in \WLp\cdot \psi$.
 %Let $P(\mu)$ denote the projective cover of $L(\mu)$ in $\O_\psi^\p$ for all $\mu \in \WLp\cdot \psi$.
 Let $(-)^\ast$ denote the dual functor on the BGG category $\O$. The dual functor $(-)^\ast$ on $\O$ descending to the parabolic BGG category $\O^\p$ is also denoted by $(-)^\ast$ (see, for example, \cite[Section 3.2]{Humphreys:BGG}, in which $(-)^\ast$ is denoted by ``$(-)^\vee$"). Recall that a module $M$ is called self-dual if $M^\ast$ is isomorphic to $M$. Note that $L(\mu)^*\cong L(\mu)$ for all $\mu\in \Lam_\p^+$. That is, every simple module in $\O^\p$ is self-dual.

Let $\lam$ denote a fixed dominant integral weight. Let $T_0^\lam: \O_0\rightarrow\O_\lam$ and $T_\lam^0: \O_\lam\rightarrow\O_0$ be the two translation functors defined by $T_0^\lam(-):=\pr_\lam(E\otimes -)$ and $T^0_\lam(-):=\pr_0(F\otimes -)$ \cite{Jantz1} (see also
\cite[Chapter 7]{Humphreys:BGG}, \cite{Backelin:Koszul}), where $E$ is a finite-dimensional irreducible $\g$-module with extremal weight $\lam$, $\pr_\lam$ is the projection from $\O$ onto the block $\O_\lam$,  $F$ is a finite-dimensional irreducible $\g$-module with extremal weight $-\lam$ and $\pr_0$ is the projection from $\O$ onto the block $\O_0$. The functors $T_0^\lam$ and $T^0_\lam$ descend to the functors $T_0^\lam: \O_0^\p\rightarrow\O_\lam^\p$ and $T_\lam^0: \O_\lam^\p\rightarrow\O_0^\p$ on the parabolic categories. Here and after, the functors $T_0^\lam$ and $T^0_\lam$ stand for the functors on the parabolic categories.
The following lemma is a collection of some well-known results of the translation functors on the parabolic categories (see, for example, \cite[Lemma 2.5, 2.6]{Backelin:Koszul}, \cite[Pages 130, 140, 143, 186, 192]{Humphreys:BGG}).

\begin{lem} \label{5properties} For a dominant integral weight $\lam$, we have the following:
\begin{enumerate}
  \item[(i)] $T_0^\lam: \O_0^\p\rightarrow\O_\lam^\p$ and $T_\lam^0: \O_\lam^\p\rightarrow\O_0^\p$ are exact functors adjoint to each other. Moreover, for any $M,N\in\O^\p_\lam$ and $f\in\Hom_{\O}(M,N)$, we have
$$
T_0^\lam   T^0_\lam(M)\cong\underbrace{M\oplus\cdots\oplus M}_{\text{$|W_\lam|$ copies}},\quad
T_0^\lam   T^0_\lam(f)\cong\underbrace{f\oplus\cdots\oplus f}_{\text{$|W_\lam|$ copies}}.
$$
  \item[(ii)] For $w\in W$ with $w\cdot 0\in \Lambda_\p^+$, we have
  $$
  T_0^\lam(L(w\cdot 0))
  \cong\begin{cases} L(w\cdot\lam),&\text{ if $w\in \WLl$};\\
0,&\text{ if $w\notin  \WLl$}.\end{cases}
$$
  \item[(iii)] $T_0^\lam$ and $T^0_\lam$ send projectives to projectives. Moreover, if $w\in\WLl$, then $T^0_\lam(P(w\cdot \lam))=P(w\cdot 0)$.\\
\item[(iv)] $T_0^\lam$ and $T^0_\lam$ commute with the dual functor $(-)^\ast$.
\end{enumerate}
\end{lem}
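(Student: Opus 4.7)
The plan is to establish each of the four properties using standard translation functor machinery as developed in \cite{Humphreys:BGG, Jantz1}. For (i), exactness of both $T_0^\lam$ and $T_\lam^0$ follows by writing each as a composition of two exact functors: tensoring with a finite-dimensional $\g$-module (exact because tensoring is over $\C$) and projecting onto a block of $\O$ (exact since $\O$ decomposes as a direct sum of its blocks). The adjunction $(T_0^\lam, T_\lam^0)$ comes from the tensor-hom adjunction $(E \otimes -) \dashv (E^\ast \otimes -)$ combined with the identification $F \cong E^\ast$ (since $F$ has extremal weight $-\lam$, opposite to that of $E$) and the self-adjointness of block projections. For the composition identity I would compute $T_0^\lam T_\lam^0(M) = \pr_\lam\bigl(E \otimes \pr_0(F \otimes M)\bigr)$ for $M \in \O_\lam^\p$, filter $F \otimes M$ by parabolic Verma subquotients, and isolate the contribution of the weight $\lam$ in $E \otimes F$, which has multiplicity $|W_\lam|$ --- this is essentially Jantzen's translation principle. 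The formula on morphisms is then naturality of the isomorphism applied to components.

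For (ii), the ``translation to the wall'' argument proceeds by first analyzing $T_0^\lam \Delta(w\cdot 0)$ via a parabolic Verma filtration of $E \otimes \Delta(w\cdot 0)$ whose subquotients are $\Delta(w\cdot 0 + \nu)$ indexed by weights $\nu$ of $E$. Since the extremal weights of $E$ form the $W$-orbit of $\lam$, the block projection $\pr_\lam$ retains exactly one subquotient, namely $\Delta(w\cdot\lam)$, and only when $w\cdot\lam \in \Lam_\p^+$, that is, when $w \in \WLl$. Passing from Verma modules to simples requires an inductive argument on the Bruhat order: if $w \in \WLl$ then $L(w\cdot 0)$ is the unique simple quotient of $\Delta(w\cdot 0)$ and its translate must be $L(w\cdot\lam)$, while if $w \notin \WLl$ then $w\cdot\lam$ sits outside $\Lam_\p^+$ and the translated simple must vanish, which can be verified via BGG reciprocity together with a character count on the parabolic Verma filtration. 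This is where the ``singular'' phenomenon at the wall enters, and it is the main technical point.

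For (iii), I invoke the general fact that an exact functor admitting an exact right adjoint preserves projectives: $\Hom_{\O_\lam^\p}(T_0^\lam P, -) \cong \Hom_{\O_0^\p}(P, T_\lam^0 -)$ is then a composition of two exact functors, so $T_0^\lam P$ is projective, and symmetrically for $T_\lam^0$. The identification $T_\lam^0 P(w\cdot\lam) \cong P(w\cdot 0)$ follows by computing simple quotients: $\Hom_{\O_0^\p}(T_\lam^0 P(w\cdot\lam), L(\mu)) \cong \Hom_{\O_\lam^\p}(P(w\cdot\lam), T_0^\lam L(\mu))$ is one-dimensional precisely when $T_0^\lam L(\mu) \cong L(w\cdot\lam)$, which by (ii) forces $\mu = w\cdot 0$; combined with indecomposability this pins down the projective. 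Finally for (iv), block projections commute with $(-)^\ast$ because duality preserves central characters, and tensoring with a finite-dimensional $\g$-module commutes with $(-)^\ast$ because finite-dimensional modules are self-dual under BGG duality and $(V \otimes M)^\ast \cong V^\ast \otimes M^\ast$. The main obstacles are the weight-counting in (i) and the wall-vanishing in (ii); both are standard and developed in detail in \cite{Humphreys:BGG} and \cite{Jantz1}, so for our purposes it suffices to cite those sources.
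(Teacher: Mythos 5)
The paper does not actually prove this lemma: it states it explicitly as ``a collection of some well-known results'' and simply cites \cite[Lemma~2.5, 2.6]{Backelin:Koszul} and \cite{Humphreys:BGG}. Your conclusion --- that for the paper's purposes one cites those references --- therefore matches the paper's treatment exactly. Your supporting sketch, however, contains a genuine error in part (ii) that is worth flagging.

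You assert that ``if $w \notin \WLl$ then $w\cdot\lam$ sits outside $\Lam_\p^+$ and the translated simple must vanish.'' This is false as stated. Recall that $\WLl$ is defined as the set of $w$ that are \emph{both} maximal-length left coset representatives of $W_\lam$ (that is, $w \in W^\lam$) \emph{and} satisfy $w\cdot\lam \in \Lam_\p^+$. These two conditions are independent, and it can perfectly well happen that $w\cdot 0 \in \Lam_\p^+$ and $w\cdot\lam \in \Lam_\p^+$ while $w$ fails to be the longest element of $wW_\lam$; already for $\g = \mathfrak{sl}_2$, $\p=\b$, $\lam=-\rho$ and $w = e$ one has $e\cdot 0 = 0 \in \Lam_\p^+$ and $e\cdot\lam = -\rho \in \Lam_\p^+$, yet $e \notin W^\lam = \{s\}$. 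In that situation $T_0^\lam L(w\cdot 0)$ still vanishes, but the reason is the classical ``translation to the wall kills'' phenomenon (the translate of a simple dies unless $w$ is maximal in its $W_\lam$-coset), not any parabolic truncation of the highest weight. Indeed, at the parabolic Verma level one has $T_0^\lam\Delta(w\cdot 0) \cong \Delta(w\cdot\lam)$ \emph{whenever} $w\cdot\lam\in\Lam_\p^+$, regardless of whether $w$ is maximal in its coset; the vanishing occurs only after passing to simples, and establishing it requires the inductive-on-Bruhat-order argument you gesture at but do not carry out. Since this vanishing is precisely the technical content of (ii), your sketch elides the real work there. Nevertheless, because both you and the paper ultimately defer to the cited sources for the proof, the bottom line is the same.
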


\begin{lem}\label{tlam0simple} For $w\in \WLl$, $T_\lam^0(L(w\cdot\lam))$ is self-dual with a simple head isomorphic to $L(w\cdot 0)$ and a simple socle isomorphic to $L(w\cdot 0)$. Moreover, we have
 $$
 [T_\lam^0(L(w\cdot\lam)):L(y\cdot 0)]=\delta_{y,w}|W_\lam|, \quad \text{for any $y\in {\mathring{W}}^\lam$}.
 $$
\end{lem}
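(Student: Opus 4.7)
The plan rests on three tools: the adjunction $(T_0^\lam,T_\lam^0)$, self-duality of every simple module in $\O^\p$, and the compatibility of translation with duality in Lemma~\ref{5properties}(iv).

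First I would establish self-duality of $T_\lam^0(L(w\cdot\lam))$. Since $L(w\cdot\lam)^\ast\cong L(w\cdot\lam)$ in $\O^\p$ and $T_\lam^0$ commutes with $(-)^\ast$, Lemma~\ref{5properties}(iv) gives
$$
T_\lam^0(L(w\cdot\lam))^\ast \;\cong\; T_\lam^0\bigl(L(w\cdot\lam)^\ast\bigr) \;\cong\; T_\lam^0(L(w\cdot\lam)).
$$

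Next I would determine the head by adjunction. For every $y\in W$ with $y\cdot 0\in\Lam_\p^+$, Lemma~\ref{5properties}(i) yields
$$
\Hom_\O\bigl(T_\lam^0(L(w\cdot\lam)),L(y\cdot 0)\bigr)\;\cong\;\Hom_\O\bigl(L(w\cdot\lam),T_0^\lam(L(y\cdot 0))\bigr).
$$
By Lemma~\ref{5properties}(ii), the right-hand side becomes $\Hom_\O(L(w\cdot\lam),L(y\cdot\lam))$ when $y\in\WLl$ and vanishes otherwise; since $\WLl$ consists of distinct coset representatives, Schur's lemma (cf.\ \eqref{scalarendo}) makes this Hom space one-dimensional precisely when $y=w$ and zero otherwise. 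Hence $\head\bigl(T_\lam^0(L(w\cdot\lam))\bigr)\cong L(w\cdot 0)$, which as a byproduct confirms $w\in\WLz$. Combining with the self-duality from Step~1 forces $\soc\bigl(T_\lam^0(L(w\cdot\lam))\bigr)\cong L(w\cdot 0)$ as well.

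For the multiplicity formula I would apply the exact functor $T_0^\lam$ back. On the one hand, Lemma~\ref{5properties}(i) gives $T_0^\lam T_\lam^0(L(w\cdot\lam))\cong L(w\cdot\lam)^{\oplus |W_\lam|}$, whose $L(z\cdot\lam)$-multiplicity is $|W_\lam|\delta_{z,w}$ for $z\in\WLl$. On the other hand, exactness of $T_0^\lam$ together with Lemma~\ref{5properties}(ii) (which annihilates $L(y\cdot 0)$ for $y\notin\WLl$ and sends the remaining ones to $L(y\cdot\lam)$) produces
$$
[T_0^\lam T_\lam^0(L(w\cdot\lam)):L(z\cdot\lam)]\;=\;[T_\lam^0(L(w\cdot\lam)):L(z\cdot 0)],\qquad z\in\WLl.
$$
Equating the two expressions delivers the claimed identity. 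The only delicate point is keeping track of the index sets $\WLz$ versus $\WLl$; the required inclusion $\WLl\subseteq\WLz$ actually emerges from the head calculation itself, so I do not anticipate a genuine obstacle beyond this bookkeeping.
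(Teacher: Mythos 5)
Your argument is correct and follows essentially the same route as the paper: self-duality via Lemma~\ref{5properties}(iv), the head via the adjunction $(T_\lam^0,T_0^\lam)$ together with Lemma~\ref{5properties}(i)--(ii), and the multiplicity by comparing $T_0^\lam T_\lam^0(L(w\cdot\lam))\cong L(w\cdot\lam)^{\oplus|W_\lam|}$ with the image of the composition series under the exact functor $T_0^\lam$. The only cosmetic difference is the order in which you establish self-duality versus the head; the substance is identical.
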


\begin{proof} By Lemma~\ref{5properties}~(i)~and~(ii), for any $y\in W$ with $y\cdot 0\in \Lambda_\p^+$, we have,
\begin{align*}
 \Hom_\O(T_\lam^0(L(w\cdot\lam)), L(y\cdot 0))&\cong\Hom_{\O}(L(w\cdot\lam),T_0^\lam(L(y\cdot 0)))\\
 & \cong\begin{cases} \Hom_{\O}(L(w\cdot\lam),L(y\cdot\lam))\cong\delta_{y,w}\C,&\text{ if $y\in \WLl$};\\
0,&\text{ if $y\notin \WLl$},\end{cases}\\
 & = \delta_{y,w}\C.
\end{align*}
This proves that $T_\lam^0(L(w\cdot\lam))$ has a simple head isomorphic to $L(w\cdot 0)$. Since $\bigl(T_\lam^0(L(w\cdot\lam))\bigr)^{\ast}\cong T_\lam^0(L(w\cdot\lam)^{\ast})\cong T_\lam^0(L(w\cdot\lam))$, $T_\lam^0(L(w\cdot\lam))$ is self-dual with a simple socle isomorphic to $L(w\cdot 0)$.

Finally, by Lemma~\ref{5properties}~(i) and (ii), for any $y\in \WLl$ satisfying $[T_\lam^0(L(w\cdot\lam)):L(y\cdot 0)]\neq 0$, we have $L(y\cdot \lam)\cong T_0^\lam(L(y\cdot 0))$ and hence
$$
[T_0^\lam(T_\lam^0(L(w\cdot\lam)): L(y\cdot \lam)]
=[T_\lam^0(L(w\cdot\lam):L(y\cdot 0)] .
$$

By Lemma~\ref{5properties}~(i) again, we get that
$y=w$ and $[T_\lam^0(L(w\cdot\lam)):L(w\cdot 0)]=|W_\lam|$.
\end{proof}

\begin{rem}
A graded version of Lemma~\ref{tlam0simple} is given in Lemma~\ref{keylem41} below.
\end{rem}

\begin{dfn} \label{socular1} For $\gamma\in\Lambda_\p^+$, $\lam$ is said to be socular if $L(\gamma)$ lies in the socle of a parabolic Verma module $\Delta(\mu)$ for some $\mu\in\Lam_\p^+$.
\end{dfn}

\begin{lem} \label{socular2} \text{(\cite[Addendum]{Irv1})} For $\gamma\in\Lambda_\p^+$, $P(\gamma)$ is injective if and only if $\gamma$ is socular. In this case, $P(\gamma)$ is a tilting module and in particular it is self-dual.
\end{lem}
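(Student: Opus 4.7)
The plan is to reduce the biconditional to showing $\soc(P(\gamma))\cong L(\gamma)$ if and only if $\gamma$ is socular, via a chain of equivalent characterisations. Since every simple module in $\O^\p$ is self-dual, the duality $(-)^\ast$ sends $P(\gamma)$ to the injective envelope $I(\gamma)$ of $L(\gamma)$, and parabolic BGG reciprocity $[P(\gamma):\Delta(\nu)]=[\Delta(\nu):L(\gamma)]=[I(\gamma):\nabla(\nu)]$ forces $\dim P(\gamma)=\dim I(\gamma)$. Since $P(\gamma)$ has simple head $L(\gamma)$ and $I(\gamma)$ has simple socle $L(\gamma)$, one sees that $P(\gamma)\cong I(\gamma)$ exactly when $\soc(P(\gamma))\cong L(\gamma)$; in that case $P(\gamma)$ is self-dual, and dualising its $\Delta$-flag yields a $\nabla$-flag, so $P(\gamma)$ is automatically tilting.

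For the forward implication, assume $\soc(P(\gamma))\cong L(\gamma)$. The projective $P(\gamma)$ admits a parabolic Verma flag $0=F_0\subset F_1\subset\cdots\subset F_n=P(\gamma)$ with $F_i/F_{i-1}\cong\Delta(\mu_i)$ and top layer $\Delta(\gamma)$. The bottom step $F_1\cong\Delta(\mu_1)$ is a nonzero submodule of $P(\gamma)$, so $0\neq\soc(\Delta(\mu_1))\subseteq\soc(P(\gamma))=L(\gamma)$, forcing $L(\gamma)\hookrightarrow\Delta(\mu_1)$. Hence $\gamma$ is socular.

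For the converse, assume $L(\gamma)\hookrightarrow\Delta(\mu)$ for some $\mu\in\Lam_\p^+$. My goal is to identify $P(\gamma)$ with the indecomposable tilting module $T(\gamma)$ of highest weight $\gamma$; since tilting modules are self-dual, this immediately delivers $\soc(P(\gamma))\cong L(\gamma)$. BGG reciprocity together with the self-duality of simples yields $[P(\gamma):\Delta(\nu)]=[\Delta(\nu):L(\gamma)]=[\nabla(\nu):L(\gamma)]=[T(\gamma):\Delta(\nu)]$, so $P(\gamma)$ and $T(\gamma)$ share the same $\Delta$-flag multiplicities and hence the same dimension. The main obstacle is promoting this numerical coincidence to a genuine isomorphism: I would use projectivity of $P(\gamma)$ together with the surjection $T(\gamma)\twoheadrightarrow\Delta(\gamma)\twoheadrightarrow L(\gamma)$ to produce a nonzero morphism $P(\gamma)\to T(\gamma)$, and then conclude by comparing dimensions and invoking indecomposability of both modules. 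Alternatively, one may simply appeal to Irving's direct construction in \cite[Addendum]{Irv1}, which produces the required embedding by exploiting the socular hypothesis directly inside a chosen parabolic Verma flag.
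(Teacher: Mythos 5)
The paper does not actually prove this lemma; it is stated as a direct citation of Irving's Addendum, so there is no internal proof to compare against. Taking your argument on its own terms, the forward implication (simple socle $L(\gamma)$ implies $\gamma$ socular, via the bottom layer of a parabolic Verma flag) is correct, and the reduction via $\dim P(\gamma)=\dim I(\gamma)$ is a reasonable start. However, the converse paragraph is flawed.

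The central claim there, that $P(\gamma)\cong T(\gamma)$, is false in general. The indecomposable projective $P(\gamma)$ has $\Delta(\gamma)$ as a \emph{quotient} and $\Delta(\mu)$ with $\mu>\gamma$ below, whereas $T(\gamma)$ has $\Delta(\gamma)$ as a \emph{submodule} and $\Delta(\mu)$ with $\mu<\gamma$ above; the two coincide only when $P(\gamma)=\Delta(\gamma)$. Already in the principal block of $\mathfrak{sl}_2$ the socular weight is the antidominant one $\gamma=-2$, and $P(-2)$ is the big projective, which is $T(0)$, not $T(-2)=L(-2)$. Relatedly, the displayed chain $[\nabla(\nu):L(\gamma)]=[T(\gamma):\Delta(\nu)]$ is not BGG reciprocity; that identity reads $(P(\gamma):\Delta(\nu))=[\nabla(\nu):L(\gamma)]$, and tilting flag multiplicities obey a genuinely different (Ringel-dual) formula. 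In the $\mathfrak{sl}_2$ example $(T(0):\Delta(-2))=1$ while $[\nabla(-2):L(0)]=0$, so the asserted equality fails, and the ``same $\Delta$-flag multiplicities'' conclusion it was meant to feed is also false. If $\gamma$ is socular, the correct statement is that $P(\gamma)\cong T(\mu)$ for a different weight $\mu$ (the maximal weight appearing in the $\Delta$-flag of $P(\gamma)$).

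There is also a smaller gap in the opening reduction: you establish $P(\gamma)\cong I(\gamma)$ iff $\soc P(\gamma)\cong L(\gamma)$, but the lemma's hypothesis is only that $P(\gamma)$ is injective, i.e.\ $P(\gamma)\cong I(\mu)$ for some $\mu$, and the step $\mu=\gamma$ is not supplied. It does hold, and can be extracted either from BGG reciprocity in the Grothendieck group (comparing $[\Delta(\nu):L(\gamma)]$ with $[\Delta(\nu):L(\mu)]$ at $\nu=\gamma$ and $\nu=\mu$) or, more conceptually, from the fact that a module with both a $\Delta$- and a $\nabla$-flag is indecomposable tilting, and indecomposable tilting modules are self-dual, forcing $\soc P(\gamma)\cong(\head P(\gamma))^\ast\cong L(\gamma)$. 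Incorporating that step, and replacing the $P(\gamma)\cong T(\gamma)$ argument by the actual Addendum argument of Irving (or by the self-duality-of-tiltings route applied to the projective cover of a socular simple), would repair the proof; as written, however, the converse direction does not go through.
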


Lemma~\ref{socular2} implies that each $\O_\psi^\p$ contains a projective-injective module for any dominant integral weight $\psi$.

\begin{lem}\label{3properties} Let $w\in \WLl$. If $w\cdot\lam$ is socular, then $w\cdot 0$ is socular too.
\end{lem}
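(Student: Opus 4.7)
The plan is to show that $P(w\cdot 0)$ is a self-dual projective module, which by \cref{socular2} forces $w\cdot 0$ to be socular. The work is essentially transporting the self-duality of $P(w\cdot\lam)$ across the translation functor $T_\lam^0$.

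First, since $w\cdot\lam$ is socular, \cref{socular2} tells us that $P(w\cdot\lam)$ is a tilting module and is in particular self-dual, i.e.\ $P(w\cdot\lam)^\ast\cong P(w\cdot\lam)$. Next, I would invoke \cref{5properties}~(iii) for $w\in\WLl$ to get $T_\lam^0(P(w\cdot\lam))=P(w\cdot 0)$, and then use \cref{5properties}~(iv), the compatibility $T_\lam^0\circ(-)^\ast\cong(-)^\ast\circ T_\lam^0$, to compute
\begin{equation*}
  P(w\cdot 0)^\ast \;\cong\; \bigl(T_\lam^0(P(w\cdot\lam))\bigr)^\ast \;\cong\; T_\lam^0\bigl(P(w\cdot\lam)^\ast\bigr) \;\cong\; T_\lam^0(P(w\cdot\lam)) \;=\; P(w\cdot 0),
\end{equation*}
so $P(w\cdot 0)$ is self-dual.

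Finally, since the duality functor $(-)^\ast$ on $\O^\p$ sends projectives to injectives, $P(w\cdot 0)^\ast$ is an injective object of $\O^\p$, and therefore $P(w\cdot 0)$ itself is injective. Applying \cref{socular2} in the reverse direction to the weight $w\cdot 0\in\Lam_\p^+$, we conclude that $w\cdot 0$ is socular, as desired.

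There is no real obstacle in this argument beyond checking that the stated general properties of the translation functor (exactness, the explicit formula $T_\lam^0 P(w\cdot\lam)=P(w\cdot 0)$ for $w\in\WLl$, and commutation with duality) may be combined as above; everything is already packaged in \cref{5properties}. The only point to be mindful of is that \cref{5properties}~(iii) requires $w\in\WLl$, which is part of the hypothesis, so no extra verification is needed.
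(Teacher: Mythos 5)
Your proof follows essentially the same route as the paper's second paragraph: transport self-duality of $P(w\cdot\lam)$ across the translation functor $T_\lam^0$ using \cref{5properties}(iii) and (iv), and then use \cref{socular2}. That part is correct, and the chain of isomorphisms is clean.

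However, there is a preliminary step that the paper takes care to prove and which your argument omits: one must first show that $w\cdot 0\in\Lam_\p^+$. This is not automatic from the hypothesis $w\in\WLl$, which only tells you that $w\cdot\lam\in\Lam_\p^+$. Without it, the very statement ``$w\cdot 0$ is socular'' is ill-posed, since socularity (\cref{socular1}) is defined only for weights in $\Lam_\p^+$; moreover the expression $P(w\cdot 0)$ used in \cref{5properties}(iii) is not meaningful until you know $L(w\cdot 0)$ lies in $\O^\p$. You do write ``the weight $w\cdot 0\in\Lam_\p^+$'' in your last sentence, but you simply assert it rather than derive it. The paper's proof opens by verifying this directly: for $\alpha\in I$, from $\<w\cdot\lam,\alpha^\vee\>\ge 0$ one gets $\<\lam+\rho,w^{-1}(\alpha)^\vee\>=\<w(\lam+\rho),\alpha^\vee\>\ge 1$, and then regularity/dominance of $\lam+\rho$ forces $w^{-1}(\alpha)\in\Phi^+$, hence $\<w\cdot 0,\alpha^\vee\>\ge 0$. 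You should add this computation, or at least explicitly invoke whatever statement (implicit in \cref{5properties}(iii)) guarantees $w\cdot 0\in\Lam_\p^+$ for $w\in\WLl$, so that the reduction to \cref{socular2} is well-founded.
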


\begin{proof} We first show that $w\cdot 0\in\Lam_\p^+$. To this end, it suffices to show that $w^{-1}(\alpha)\in\Phi^+$ for any $\alpha\in I$.
By assumption, we have $w\cdot\lam\in\Lam_\p^+$ and hence $$
\<\lam+\rho, w^{-1}(\alpha)^\vee\>=\<\lam+\rho, w^{-1}(\alpha^\vee)\>=\<w(\lam+\rho),\alpha^\vee\>\geq \<\rho,\alpha^\vee\>=1,\quad\,\forall\,\alpha\in I .
$$
Since $\lam$ is dominant, we can deduce from the above inequality that $w^{-1}(\alpha)\in\Phi^+$ as required. This proves that $w\cdot 0\in\Lam_\p^+$.

By assumption that $w\cdot\lam$ is socular, it follows that $P(w\cdot\lam)$ is self-dual and hence $P(w\cdot0)\cong T_\lam^0(P(w\cdot\lam))$ is also self-dual. Therefore $w\cdot 0$ is a socular weight.
\end{proof}

\begin{dfn} \label{2definitions} For a dominant integral weight $\psi$, we set
\begin{equation}\label{2defn}
 \Lam_0^\psi:=\bigl\{\mu\in\WLp\cdot\psi\bigm|\text{$\mu$ is socular}\bigr\}\quad \text{and}\quad \check{W}^\psi:=\{w\in \WLp \bigm|\,  \text{  $w\cdot\psi$ is socular}\}.
\end{equation}
\end{dfn}

 It is clear that the map $w\mapsto w\cdot\psi$ defines a bijection between $\check{W}^\psi$ and $\Lam_0^\psi$. Note that $\check{W}^0=\{w\in W \bigm| w\cdot 0\in \Lam_\p^+\text{ and  $w\cdot 0$ is socular}\}$.
It follows from Lemma \ref{3properties} that
\begin{equation}\label{2inclusion}
 \check{W}^\lam\subset\check{W}^0.
 \end{equation}

\begin{dfn}\label{KoszulBasic} For a dominant integral weight $\psi$, we define the basic algebra of the category $\O_\psi^\p$ by $$
A^\p(\psi):=\biggl(\End_{\O^\p}\Bigl(\oplus_{\mu\in {{\mathring{W}}^\psi}\cdot\psi}P(\mu)\Bigr)\biggr)^{\mathrm{op}}.
$$
\end{dfn}

The functor $\mathcal{F}^\psi:=\Hom_{\O^p}\bigl(\oplus_{\mu\in {{\mathring{W}}^\psi\cdot\psi}}P(\mu),-\bigr)$
gives the equivalence of categories
\begin{equation}\label{equiva}
\mathcal{F}^\psi: \O_\psi^\p\cong{A}^\p(\psi)\mod.
\end{equation}

\begin{dfn} \label{basicObj} Let $\psi$ be a dominant integral weight.
For each $\mu\in {{\mathring{W}}^\psi}\cdot\psi$, we define $$
\LFlat[\mu]:=\mathcal{F}^\psi(L(\mu)),
\quad \DFlat[\mu]:=\mathcal{F}^\psi(\Delta(\mu)),
\quad \PFlat[\mu]:=\mathcal{F}^\psi(P(\mu)).
$$
\end{dfn}

By \cite[Theorem 1.1.3]{BGS:Koszul} and \cite[Theorem 1.1]{Backelin:Koszul} (see also \cite[Section 8.1, 8.2, and 8.4]{Strop1}),
$A^\p(\psi)$ can be endowed with a Koszul $\Z$-grading. Let $\widetilde{A}^\p{(\psi)}$ denote the algebra $A^\p(\psi)$ equipped with the Koszul $\Z$-grading. Hence $\widetilde{A}^\p{(\psi)}$ is a positively graded $\mathbb{C}$-algebra with $\widetilde{A}^\p{(\psi)}_0$ spanned by orthogonal idempotents. In particular, simple and projective ${A}^\p{(\psi)}$-modules have graded lifts and every simple $\widetilde{A}^\p{(\psi)}$-module is one-dimensional concentrated in a fixed degree. For each $\mu\in \WLp\cdot\psi$, let $\wLFlat[\mu]$ be the graded lift of simple module $\LFlat[\mu]$ concentrated in degree $0$ and let
\begin{equation}\label{gproj}
\wPFlat[\mu]=\sum_{j=0}^{d_\mu}(\wPFlat[\mu])_j\quad\text{ with $(\wPFlat[\mu])_{d_\mu}\not= 0$}
\end{equation}
be the graded lift of  the projective module $\PFlat[\mu]$ such that the natural projection
from $\wPFlat[\mu]$ to $\wLFlat[\mu]$ is a homogeneous homomorphism of degree $0$.

Let $\lam$ be a dominant integral weight. By \cite[Theorem 1.1.3, Proposition 2.4.1]{BGS:Koszul} and \cite[Theorem 1.1]{Backelin:Koszul}, $\widetilde{A}^\p{(\lam)}$ is Koszul and each indecomposable projective-injective module $P$ is rigid in the sense that both the radical filtration and the socle filtration of $P$ coincide with its grading filtration up to a shift of grading. In particular, the graded length of any indecomposable projective-injective module $P$ is the same as its Loewy length. By Corollary~\ref{symm-eq}, the following proposition obtained by Coulembier and Mazorchuk provides a necessary condition for the existence of a homogeneous symmetrizing form on the endomorphism algebra $B_0^\p$ (see Definition~\ref{2definitions2} below).

 \begin{prop}\cite{CoulMazor}(cf. \cite[Theorem 5.2, Remark 5.3]{MazorStrop}) \label{maincor3.1}
 Every indecomposable projective-injective module of $\widetilde{A}^\p{(\lam)}$ has the same graded length and hence the same Loewy length.
\end{prop}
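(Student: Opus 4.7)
The approach is to apply Proposition~\ref{keylem4} to the algebra $A := \widetilde{A}^\p(\lam)$ with the idempotent $e \in A$ that cuts out the basic projective-injective module, so that $Ae \simeq \bigoplus_{\mu \in \Lam_0^\lam} \wPFlat[\mu]$. This requires checking: positivity of the grading, a degree-zero anti-involution $\star$ on $A$ inducing a graded duality $\circledast$ that fixes simples, a $\Z$-graded highest weight structure in the sense of Definition~\ref{hwCat}, the socle condition $\soc(\wPFlat[\mu]) \simeq \wLFlat[\mu]\<d_\mu\>$ for every socular $\mu$, the equality $e^\star = e$, and a double centralizer property between $A$ and $B^{\mathrm{o}} := (\End_A(Ae))^{\mathrm{op}}$ on $Ae$.

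First I would dispatch the easier structural items. Positivity of the Koszul grading is from \cite{BGS:Koszul, Backelin:Koszul}. The BGG duality $(-)^\ast$ on $\O^\p_\lam$ fixes every simple module and, when transported through the equivalence $\mathcal{F}^\lam$ of \eqref{equiva} and lifted to the graded category, becomes a contravariant self-equivalence of $A\gmod$ preserving each $\wLFlat[\mu]$; since each $\wLFlat[\mu]$ is one-dimensional and concentrated in degree zero, a standard rigidity argument produces a homogeneous anti-involution $\star$ on $A$ of degree $0$ inducing it. The highest weight structure on $(A\gmod, \circledast)$ in the sense of Definition~\ref{hwCat} is then provided by the graded lifts of the parabolic Verma modules $\wDFlat[\mu]$, $\mu \in \WLl \cdot \lam$, via \cite{Backelin:Koszul, BGS:Koszul}.

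For the socle condition, each indecomposable projective-injective $\wPFlat[\mu]$ is Koszul-rigid, so its radical filtration equals its grading filtration and the socle lives entirely in the top degree $d_\mu$. Self-duality of $\wPFlat[\mu]$ from Lemma~\ref{socular2}, combined with the isomorphism $(\wPFlat[\mu])^\circledast \simeq \wPFlat[\mu]\<-d_\mu\>$ of Lemma~\ref{degP} (as in \eqref{gr-dual-proj}), forces $\soc(\wPFlat[\mu]) \simeq \wLFlat[\mu]\<d_\mu\>$, matching the head $\wLFlat[\mu]$ in degree $0$. I would take $e$ to be the sum of the homogeneous primitive idempotents labelling the socular weights; since the BGG duality preserves the set of projective-injective summands, $e^\star = e$ follows.

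The principal obstacle is the double centralizer property between $A$ and $B^{\mathrm{o}}$ on $Ae$. This is the parabolic-singular analogue of Soergel's \emph{Struktursatz}; it is classical for regular non-parabolic blocks and extends to parabolic and singular blocks of $\O^\p$ via the work of Soergel, Stroppel, and Mazorchuk-Stroppel, with translation functors providing the passage from the regular to the singular setting. Granting this, Proposition~\ref{keylem4} applies and yields $d_\mu = d_\nu$ for any two socular $\mu, \nu$ whose simples lie in the same block of $A$. Since $\O^\p_\lam$ is by definition a single linkage class, all socular weights in $\WLl \cdot \lam$ belong to one block, so all the graded lengths coincide. The equality of Loewy lengths then follows from Koszul rigidity, which identifies the grading filtration with both the radical and socle filtrations.
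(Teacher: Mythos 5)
The paper does not give its own proof of this proposition: it simply cites \cite{CoulMazor}. The remark immediately following the proposition does sketch an argument via Proposition~\ref{keylem4} — which is exactly the route you take — but the paper is explicit that this argument yields only a \emph{``(weaker) block version''} of the statement, not the full proposition. Your proposal's claim that ``$\O^\p_\lam$ is by definition a single linkage class, so all socular weights belong to one block'' is the sticking point. The defining property of $\O^\p_\lam$ is that the highest weights of composition factors lie in a single $W$-orbit under the dot action; this does not make $\O^\p_\lam$ indecomposable as a category. Passing to a parabolic subcategory can disconnect the linkage graph, and indeed for singular $\lam$ and nontrivial $\p$ the category $\O^\p_\lam$ generally decomposes into several blocks (this is precisely why the paper goes to the trouble of distinguishing ``block version'' from the full statement). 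Proposition~\ref{keylem4} only compares $d_\mu$ and $d_\nu$ when $L^\mu$ and $L^\nu$ lie in the \emph{same} block, so your argument establishes equality of graded lengths within each block but leaves open whether two socular weights in different blocks of $\O^\p_\lam$ can give indecomposable projective-injectives of different graded lengths. Closing that gap is exactly the content that the paper imports from Coulembier--Mazorchuk.

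A secondary point: your verification of the socle condition $\soc(\wPFlat[\mu])\simeq\wLFlat[\mu]\<d_\mu\>$ invokes Lemma~\ref{degP}, but Lemma~\ref{degP} has the inclusion $\soc(P^\lam)\subseteq P^\lam_{d_\lam}$ as a hypothesis, so as written this is circular. It can be repaired directly — Koszul rigidity places the socle in the top degree, and self-duality of the projective-injective module (Lemma~\ref{socular2}) together with $\circledast$ fixing simples identifies the socle with $\wLFlat[\mu]$ — but you should not route it through Lemma~\ref{degP}. With that fix, and with the double centralizer input from Stroppel and Mazorchuk--Stroppel as you and the paper both cite, your argument is a valid proof of the block version, but not of the proposition as stated.
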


\begin{rem} Note that a (weaker) block version of the above proposition is a consequence of Proposition \ref{keylem4}. We sketch a proof as follows. It is well known that $\widetilde{A}^\p{(\lam)}\gmod$ is a $\Z$-graded highest weight category with a duality functor.
The double centralizer property in the assumptions of Proposition~\ref{keylem4} holds for the graded algebra $\widetilde{A}^\p{(\lam)}$ by \cite[Theorem 10.1]{Strop2} and \cite[Examples 2.7 (2)]{MazorStrop}. Now all assumptions of Proposition \ref{keylem4} are satisfied and hence the block version of the proposition above follows.
\end{rem}

Using the categorical equivalences given in (\ref{equiva}), we shall simply regard the functors $T_0^\lam$, $T^0_\lam$ as the functors between
${A}^\p(0)\mod$ and $A^\p(\lam)\mod$ without further explanation. Applying \cite[the fourth paragraph in Page 147]{Backelin:Koszul}, both the functors $T_\lam^0$ and $T^\lam_0$ have graded lifts, i.e., we have graded translation
functors: $$\widetilde{T}_\lam^0: \widetilde{A}^\p{(\lam)}\gmod\rightarrow \widetilde{A}^\p{(0)}\gmod\quad \text{and}\quad \widetilde{T}_0^\lam: \widetilde{A}^\p{(0)}\gmod\rightarrow \widetilde{A}^\p{(\lam)}\gmod.$$
Moreover,  $\widetilde{T}_0^\lam$ is a right adjoint functor of $\widetilde{T}_\lam^0$ and
 \begin{equation}\label{gr-tran-si}
    \widetilde{T}_0^\lam\bigl(\wLFlat[w\cdot 0]\bigr)\simeq\wLFlat[w\cdot\lam]\quad\text{for all $w\in\mathring{W}^\lam$.}
 \end{equation}
 Recall that $M\simeq N$ defined in \secref{Graded} denotes that there is a homogeneous isomorphism of degree $0$ between graded modules $M$ and $N$.
  Therefore we have, for all $w,y\in\mathring{W}^\lam$ and $k\in \Z$,
  $$
  \hom_{\widetilde{A}^\p{(0)}}\bigl(\widetilde{T}_\lam^0(\wPFlat[w\cdot\lam]),\wLFlat[y\cdot 0]\<k\>\bigr) \cong\hom_{\widetilde{A}^\p{(\lam)}}\bigl(\wPFlat[w\cdot\lam],\widetilde{T}_0^\lam(\wLFlat[y\cdot 0])\<k\>\bigr) \cong\hom_{\widetilde{A}^\p{(\lam)}}\bigl(\wPFlat[w\cdot\lam],\wLFlat[y\cdot\lam]\<k\>\bigr) \cong\delta_{k,0}\delta_{w,y}\C
$$
and hence
\begin{equation}\label{gr-tran-proj}
 \widetilde{T}_\lam^0(\wPFlat[w\cdot\lam])\simeq\wPFlat[w\cdot 0]\quad\text{for all $w\in\mathring{W}^\lam$.}
\end{equation}

The following lemma is a graded version of Lemma~\ref{tlam0simple}. Note that $d_{\mu}$ is defined in (\ref{gproj}).

\begin{lem} \label{keylem41} For $w\in \check{W}^\lam$, we have
\begin{itemize}
  \item[(i)] $\head(\widetilde{T}_\lam^0\bigl(\wLFlat[w\cdot\lam]\bigr))\simeq\wLFlat[w\cdot 0]$, $\soc(\widetilde{T}_\lam^0\bigl(\wLFlat[w\cdot\lam]\bigr))\simeq\wLFlat[w\cdot 0]\<d_{w\cdot 0}-d_{w\cdot \lam}\>$;
  \item[(ii)] $\widetilde{T}_\lam^0\bigl(\wLFlat[w\cdot\lam]\bigr)$ is rigid, and both the radical filtration and the socle filtration of $\widetilde{T}_\lam^0\bigl(\wLFlat[w\cdot\lam]\bigr)$ coincide with its grading filtration (up to a grading shift), and the graded length of $\widetilde{T}_\lam^0\bigl(\wLFlat[w\cdot\lam]\bigr)$ is $d_{w\cdot 0}-d_{w\cdot \lam}+1$. In particular, $d_{w\cdot 0}\geq d_{w\cdot \lam}$;

  \item[(iii)] $\bigl[\widetilde{T}_\lam^0\bigl(\wLFlat[w\cdot\lam]\bigr):\wLFlat[w\cdot 0]\<k\>\bigr]\neq 0$ only if $0\leq k\leq d_{w\cdot 0}-d_{w\cdot \lam}$. Moreover, $$
\bigl[\widetilde{T}_\lam^0\bigl(\wLFlat[w\cdot\lam]\bigr):\wLFlat[w\cdot 0]\bigr]=1=\bigl[\widetilde{T}_\lam^0\bigl(\wLFlat[w\cdot\lam]\bigr):\wLFlat[w\cdot 0]\<d_{w\cdot 0}-d_{w\cdot \lam}\>\bigr].
$$
\end{itemize}
\end{lem}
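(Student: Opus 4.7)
The plan is to handle the three parts via the adjunction $\widetilde{T}_\lam^0\dashv\widetilde{T}_0^\lam$ together with the Koszul rigidity of projective-injective modules, throughout abbreviating $M:=\widetilde{T}_\lam^0\bigl(\wLFlat[w\cdot\lam]\bigr)$. For the head in~(i), I would compute $\hom_{\widetilde{A}^\p(0)}\bigl(M,\wLFlat[y\cdot 0]\<k\>\bigr)\cong\hom_{\widetilde{A}^\p(\lam)}\bigl(\wLFlat[w\cdot\lam],\widetilde{T}_0^\lam(\wLFlat[y\cdot 0])\<k\>\bigr)$ for every $y\in\WLz$ and $k\in\Z$, and then apply \eqref{gr-tran-si} (together with the graded lift of the vanishing in Lemma~\ref{5properties}(ii) when $y\notin\WLl$) to obtain $\delta_{w,y}\delta_{k,0}\C$; this pins down $\head(M)\simeq\wLFlat[w\cdot 0]$.

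For the socle in~(i), exploit that $w\in\check{W}^\lam$ makes $\wPFlat[w\cdot\lam]$ projective-injective and therefore rigid in the Koszul category, with $\soc(\wPFlat[w\cdot\lam])\simeq\wLFlat[w\cdot\lam]\<d_{w\cdot\lam}\>$. Applying the exact functor $\widetilde{T}_\lam^0$ to this inclusion and invoking \eqref{gr-tran-proj} yields an injection $M\<d_{w\cdot\lam}\>\hookrightarrow\wPFlat[w\cdot 0]$. By Lemma~\ref{3properties}, $w\cdot 0$ is socular, so $\wPFlat[w\cdot 0]$ is likewise rigid projective-injective with $\soc(\wPFlat[w\cdot 0])\simeq\wLFlat[w\cdot 0]\<d_{w\cdot 0}\>$. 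The ungraded socle of $M$ is the simple module $L(w\cdot 0)$ by Lemma~\ref{tlam0simple}, so its graded socle is a single shifted copy $\wLFlat[w\cdot 0]\<s\>$; the embedding forces $\wLFlat[w\cdot 0]\<s+d_{w\cdot\lam}\>\hookrightarrow\wLFlat[w\cdot 0]\<d_{w\cdot 0}\>$, hence $s=d_{w\cdot 0}-d_{w\cdot\lam}$ as required.

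For~(ii), applying $\widetilde{T}_\lam^0$ to the canonical degree-zero surjection $\wPFlat[w\cdot\lam]\twoheadrightarrow\wLFlat[w\cdot\lam]$ gives a surjection $\wPFlat[w\cdot 0]\twoheadrightarrow M$; combined with the embedding $M\<d_{w\cdot\lam}\>\hookrightarrow\wPFlat[w\cdot 0]$ from the previous paragraph, this forces $M$ to be concentrated in degrees $[0,d_{w\cdot 0}-d_{w\cdot\lam}]$. Rigidity of $M$ then comes from the purity principle for translation functors in the Koszul category (translation sends pure modules to pure modules, and pure modules over Koszul algebras are rigid), so the radical and socle filtrations of $M$ coincide with its grading filtration; the graded length $d_{w\cdot 0}-d_{w\cdot\lam}+1$ and the inequality $d_{w\cdot 0}\geq d_{w\cdot\lam}$ are immediate. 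For~(iii), Lemma~\ref{tlam0simple} already says every composition factor of $M$ has the form $\wLFlat[w\cdot 0]\<k\>$, part~(ii) restricts $k$ to $[0,d_{w\cdot 0}-d_{w\cdot\lam}]$, and the multiplicity $1$ at the two extremes is forced by the head and socle each being simple.

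The main obstacle is the rigidity claim in~(ii): the embedding-and-surjection description of $M$ inside the rigid module $\wPFlat[w\cdot 0]$ does not, by itself, guarantee that the radical layers of $M$ match its graded pieces. The cleanest resolution is to invoke the Koszul purity theorem (Backelin, Beilinson-Ginzburg-Soergel, Stroppel) that graded translation functors preserve purity, so $M=\widetilde{T}_\lam^0(\wLFlat[w\cdot\lam])$ is pure and hence rigid; an alternative, more elementary but substantially more involved, is to determine the full graded multiplicity polynomial of $M$ directly via the biadjoint structure of $\widetilde{T}_0^\lam\widetilde{T}_\lam^0$ applied to simples together with Kazhdan-Lusztig combinatorics for $W_\lam$.
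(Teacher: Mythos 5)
Your argument for the head in~(i) runs the adjunction $\hom(M,\wLFlat[y\cdot 0]\<k\>)\cong\hom(\wLFlat[w\cdot\lam],\widetilde{T}_0^\lam(\wLFlat[y\cdot 0])\<k\>)$ directly, whereas the paper instead applies $\widetilde{T}_\lam^0$ to the canonical surjection $\wPFlat[w\cdot\lam]\twoheadrightarrow\wLFlat[w\cdot\lam]$ and reads off the head from the resulting surjection $\wPFlat[w\cdot 0]\twoheadrightarrow M$; both are fine, and your socle argument is the same as the paper's. Parts~(iii) match as well.

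The real issue is the rigidity claim in~(ii), which you yourself flag as the obstacle. You propose to fill it by a ``Koszul purity theorem'' that graded translation functors preserve purity, so that $M$ would be pure and hence rigid. This is not the right tool, and as you state it the principle is not even coherent: a module is \emph{pure} in the BGS sense when it is concentrated in a single degree, and $M=\widetilde{T}_\lam^0(\wLFlat[w\cdot\lam])$ is manifestly not pure (it lives in degrees $0$ through $d_{w\cdot 0}-d_{w\cdot\lam}$, which is exactly what parts~(i)--(ii) are quantifying). If instead you mean that translation takes modules whose radical filtration equals the grading filtration to modules with the same property, that is a much deeper statement (of the hyperbolic-localization/weight-preservation flavour) that is not what any of the cited references prove, and is certainly not needed here. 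The paper's argument is both correct and elementary: by \cite[Proposition 2.4.1]{BGS:Koszul}, over a Koszul ring any graded module whose head and socle are each concentrated in a single degree is rigid; part~(i) establishes exactly these two hypotheses for $M$, so rigidity follows at once, with no appeal to any special behaviour of the translation functor beyond exactness and \eqref{gr-tran-proj}. You should replace your invocation of a translation-purity principle with this direct application of BGS~2.4.1.
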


\begin{proof} Applying Lemma \ref{tlam0simple}, we see that $\head({T}_\lam^0\bigl(\LFlat[w\cdot\lam]\bigr))\cong\LFlat[w\cdot 0]$ and $\soc({T}_\lam^0\bigl(\LFlat[w\cdot\lam]\bigr))\cong\LFlat[w\cdot 0]$. Now the degree $0$ surjection $\wPFlat[w\cdot\lam]\twoheadrightarrow
\wLFlat[w\cdot\lam]$ naturally induces a degree $0$ surjection $\wPFlat[w\cdot 0]\simeq\widetilde{T}_\lam^0\bigl(\wPFlat[w\cdot\lam]\bigr)\twoheadrightarrow
\widetilde{T}_\lam^0\bigl(\wLFlat[w\cdot\lam]\bigr)$, which implies that $\head(\widetilde{T}_\lam^0\bigl(\wLFlat[w\cdot\lam]\bigr))\simeq\wLFlat[w\cdot 0]$.
Similarly, the degree $0$ embedding $\wLFlat[w\cdot\lam]\<d_{w\cdot \lam}\>\hookrightarrow\wPFlat[w\cdot\lam]$ naturally induces a degree $0$ embedding
$$
\widetilde{T}_\lam^0\bigl(\wLFlat[w\cdot\lam]\bigr)\<d_{w\cdot \lam}\>=\widetilde{T}_\lam^0\bigl(\wLFlat[w\cdot\lam]\<d_{w\cdot \lam}\>\bigr)
\hookrightarrow\widetilde{T}_\lam^0\bigl(\wPFlat[w\cdot\lam]\bigr)\simeq
\wPFlat[w\cdot 0] .
$$
Since $\soc(\wPFlat[w\cdot 0])=\wLFlat[w\cdot 0]\<d_{w\cdot 0}\>$, it follows that $\soc(\widetilde{T}_\lam^0\bigl(\wLFlat[w\cdot\lam]\bigr))\simeq\wLFlat[w\cdot 0]\<d_{w\cdot 0}-d_{w\cdot \lam}\>$ as required.
This proves (i).

From above, $\widetilde{T}_\lam^0\bigl(\wLFlat[w\cdot\lam]\bigr)$ has a unique simple head as well as a unique simple socle, we see that (ii) follows from \cite[Proposition 2.4.1]{BGS:Koszul}, the fact that $\widetilde{A}^\p{(0)}$ is Koszul (\cite[Theorem 1.1.3]{BGS:Koszul}) and (i). Finally, (iii) follows from (ii) and (i).
\end{proof}

The following Lemma is a consequence of Lemma~\ref{keylem41}, Lemma~\ref{5properties}(i), (ii) and \eqref{gr-tran-si}.

\begin{lem} \label{finalcor} For $w\in \check{W}^\lam$,  there exist a sequence of integers $0=a_1^w< a_2^w\leq \cdots\leq a_{|W_\lam|-1}^w<a_{|W_\lam|}^w=d_{w\cdot 0}-d_{w\cdot \lam}$ such that
\begin{equation*}
\widetilde{T}_0^\lam \widetilde{T}^0_\lam(\wLFlat[w\cdot\lam])\simeq
\bigoplus_{j=1}^{|W_\lam|}\wLFlat[w\cdot\lam]\<a_j^w\>
\quad \text{and}\quad
\widetilde{T}_0^\lam(\wPFlat[w\cdot 0])\simeq \widetilde{T}_0^\lam \widetilde{T}^0_\lam(\wPFlat[w\cdot\lam])\simeq
\bigoplus_{j=1}^{|W_\lam|}\wPFlat[w\cdot\lam]\<a_j^w\>.
\end{equation*}
\end{lem}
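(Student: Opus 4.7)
The plan is to combine three inputs: the ungraded identity $T_0^\lam T_\lam^0(M)\cong M^{\oplus|W_\lam|}$ from Lemma~\ref{5properties}(i), the graded projective identity \eqref{gr-tran-proj}, and the structural data on $\widetilde{T}_\lam^0(\wLFlat[w\cdot\lam])$ furnished by Lemma~\ref{keylem41}.

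First I would settle the simple case. Since $\widetilde{T}_0^\lam\widetilde{T}_\lam^0$ is a graded lift of $T_0^\lam T_\lam^0$, Lemma~\ref{5properties}(i) forces
\[
\widetilde{T}_0^\lam\widetilde{T}_\lam^0(\wLFlat[w\cdot\lam])\simeq\bigoplus_{j=1}^{|W_\lam|}\wLFlat[w\cdot\lam]\<a_j^w\>
\]
for some integers $a_j^w$, which I would order non-decreasingly. Set $m:=d_{w\cdot 0}-d_{w\cdot\lam}$. By Lemma~\ref{5properties}(ii) together with \eqref{gr-tran-si}, the functor $\widetilde{T}_0^\lam$ sends each $\wLFlat[y\cdot 0]\<k\>$ to $\wLFlat[y\cdot\lam]\<k\>$ when $y\in\WLl$ and to $0$ otherwise. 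Applying exactness of $\widetilde{T}_0^\lam$ to a composition series of $\widetilde{T}_\lam^0(\wLFlat[w\cdot\lam])$ and comparing composition factors on both sides, I obtain the key identity
\[
\#\{j:a_j^w=k\}=\bigl[\widetilde{T}_\lam^0(\wLFlat[w\cdot\lam]):\wLFlat[w\cdot 0]\<k\>\bigr]\qquad\text{for every }k\in\Z.
\]
By Lemma~\ref{keylem41}(iii), the right-hand side vanishes outside $[0,m]$ and equals $1$ at both $k=0$ and $k=m$, which yields at once the chain $0=a_1^w<a_2^w\le\cdots\le a_{|W_\lam|-1}^w<a_{|W_\lam|}^w=m$.

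For the projective part, the first asserted isomorphism $\widetilde{T}_0^\lam(\wPFlat[w\cdot 0])\simeq\widetilde{T}_0^\lam\widetilde{T}_\lam^0(\wPFlat[w\cdot\lam])$ is immediate from \eqref{gr-tran-proj}. Since $\widetilde{T}_0^\lam$ preserves projectives and $T_0^\lam T_\lam^0(P(w\cdot\lam))\cong P(w\cdot\lam)^{\oplus|W_\lam|}$, there exist integers $b_j$ with
\[
\widetilde{T}_0^\lam\widetilde{T}_\lam^0(\wPFlat[w\cdot\lam])\simeq\bigoplus_{j=1}^{|W_\lam|}\wPFlat[w\cdot\lam]\<b_j\>,
\]
and the remaining task is to show $\{b_j\}=\{a_j^w\}$ as multisets. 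To that end I would apply the exact functor $\widetilde{T}_0^\lam\widetilde{T}_\lam^0$ to the degree-$0$ projection $\wPFlat[w\cdot\lam]\twoheadrightarrow\wLFlat[w\cdot\lam]$, obtaining a degree-$0$ surjection
\[
\bigoplus_{j=1}^{|W_\lam|}\wPFlat[w\cdot\lam]\<b_j\>\twoheadrightarrow\bigoplus_{j=1}^{|W_\lam|}\wLFlat[w\cdot\lam]\<a_j^w\>.
\]
Its target is semisimple of total $\C$-dimension $|W_\lam|$, so it must factor through the head of the source and induce a degree-$0$ isomorphism $\bigoplus_j\wLFlat[w\cdot\lam]\<b_j\>\simeq\bigoplus_j\wLFlat[w\cdot\lam]\<a_j^w\>$. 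Matching the multiplicity of $\wLFlat[w\cdot\lam]\<k\>$ on each side yields $\{b_j\}=\{a_j^w\}$.

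The main obstacle lies in the simple case, specifically in pinning down the endpoints $a_1^w=0$ and $a_{|W_\lam|}^w=m$ together with the two strict inequalities, and this is exactly what Lemma~\ref{keylem41}(iii) is tailored to supply. Once the extremal-multiplicity input---that $\wLFlat[w\cdot 0]$ and $\wLFlat[w\cdot 0]\<m\>$ each occur as composition factors of $\widetilde{T}_\lam^0(\wLFlat[w\cdot\lam])$ with multiplicity exactly one---is in hand, the rest is formal manipulation with exact functors, graded lifts, and heads of projective modules.
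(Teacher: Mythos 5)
Your proof is correct and follows precisely the route the paper has in mind: the paper leaves the verification implicit, simply citing Lemma~\ref{keylem41}, Lemma~\ref{5properties}(i), (ii) and \eqref{gr-tran-si}, and your write-up fills in exactly those details—reducing the simple case to the graded composition multiplicities of $\widetilde{T}_\lam^0(\wLFlat[w\cdot\lam])$ via exactness of $\widetilde{T}_0^\lam$, pinning down the endpoints and strict inequalities with Lemma~\ref{keylem41}(iii), and transferring the shift data to the projective case by applying $\widetilde{T}_0^\lam\widetilde{T}_\lam^0$ to $\wPFlat[w\cdot\lam]\twoheadrightarrow\wLFlat[w\cdot\lam]$ and comparing heads.
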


\begin{dfn} \label{2definitions2}
For a dominant integral weight $\psi$, the endomorphism algebra of the basic projective-injective module of $\O_\psi^\p$ is denoted by $$
 {B}^\p_\psi:=\End_{\widetilde{A}^\p{(\psi)}}\bigl(\oplus_{w\in\check{W}^\psi}\wPFlat[w\cdot \psi]\bigr).
$$
\end{dfn}
It is clear that $B^\p_\psi$ is a positively graded $\C$-algebra.

Since $B^\p_0$ is a symmetric algebra over $\C$ \cite[Theorem 4.6]{MazorStrop}, there are an admissible basis $\mathcal{B}$ and the  canonical symmetrizing form $\tr$ attached to $\mathcal{B}$ on $B^\p_0$ by Corollary~\ref{symm-eq} and Proposition~\ref{keylem3}. For each
 $w\in\check{W}^0$, there is a unique endomorphism $\theta_{w\cdot 0}\in\mathcal{B}\cap\End_{\widetilde{A}^\p{(0)}}(\wPFlat[w\cdot 0])_{d_{w\cdot 0}}$. Note that  $\theta_{w\cdot 0}(\wPFlat[w\cdot 0])=\soc(\wPFlat[w\cdot 0])\simeq\wLFlat[w\cdot 0]\<{d_{w\cdot 0}}\>$. We may assume that  $\tr(\theta_{w\cdot 0})=1$ for all $w\in\check{W}^0$.

Recall that $\check{W}^\lam\subset\check{W}^0$ by \eqref{2inclusion}. From now on, we will fix an isomorphism $\widetilde{T}_0^\lam (\wPFlat[w\cdot 0])\simeq
\bigoplus_{j=1}^{|W_\lam|}\Pw_j$ for each $w\in\check{W}^\lam$ obtained  in Lemma~\ref{finalcor}, where $\Pw_j:=\wPFlat[w\cdot\lam]\<a_j^w\>$ for all $1\le j\le |W_\lam|$.  For $w\in\check{W}^\lam$, we will use the matrix notations to write any element in
$\Hom_{\widetilde{A}^\p{(\lam)}}(\bigoplus_{j=1}^{|W_\lam|}\Pw_j, \bigoplus_{j=1}^{|W_\lam|}\Pw_j)$ in the form of
$$\sum_{1\le i,j\le|W_\lam|}h_{ji},\quad\text{where $h_{ji}\in
\Hom_{\widetilde{A}^\p{(\lam)}}(\Pw_j, \Pw_i)$. }$$
Therefore every element in
$\End_{\widetilde{A}^\p{(\lam)}}(\bigoplus_{j=1}^{|W_\lam|}\Pw_j)$ can be written in the form
\begin{equation}\label{exp}
\sum_{1\le i,j\le|W_\lam|}g_{ji}[a_i^w]\pi^w_{ji},
\end{equation}
where $g_{ji}\in \End_{\widetilde{A}^\p{(\lam)}}(\wPFlat[w\cdot\lam])$ and $\pi^w_{ji}:=\id_{\wPFlat[w\cdot\lam]}[a_j^w,a_i^w]$, where $\id_{\wPFlat[w\cdot\lam]}$ is the identity map on ${\wPFlat[w\cdot\lam]}$, and $f[j,i]$ and $f[i]$ are defined in \eqref{gr-shift} and \eqref{gr-shift=} for a homogeneous homomorphism  $f$ between graded modules. In other words, $\pi^w_{ji}$  is a homogeneous homomorphism from $\Pw_j$ onto $\Pw_i$ of degree $a_i^w-a_j^w$ such that $\underline{\mathrm{For}}(\pi^w_{ji})$ is the identity map on $\PFlat[w\cdot\lam]$ for all $i$, $j$.

Since $\deg(\theta_{w\cdot 0})=d_{w\cdot 0}$ for $w\in\check{W}^\lam$, we have $\deg(\widetilde{T}_0^\lam(\theta_{w\cdot 0}))=d_{w\cdot 0}$ and hence
\begin{equation}\label{T-theta}
\widetilde{T}_0^\lam(\theta_{w\cdot 0})=\theta_{w\cdot \lam}[d_{w\cdot 0}-d_{w\cdot \lam}]\pi^w_{1,|W_\lam|}
\end{equation}
by Lemma~\ref{finalcor} and the fact that $0\leq\deg f\leq d_{w\cdot\lam}$ for any homogeneous map $f\in\End_{\widetilde{A}^\p{(\lam)}}(\PFlat[w\cdot\lam])$, where $\theta_{w\cdot \lam}\in  \End_{\widetilde{A}^\p{(\lam)}}(\wPFlat[w\cdot \lam])_{d_{w\cdot \lam}}$ satisfies $\theta_{w\cdot \lam}(\wPFlat[w\cdot \lam])=\soc(\wPFlat[w\cdot \lam])$.
The $\theta_{w\cdot \lam}$s defined from above play crucial roles for the rest of the paper.

By \cite[Theorem 4.6]{MazorStrop}, $B^\p_0$ is a symmetric algebra over $\C$. By Proposition~\ref{homogekeylem2}, for every $w\in\check{W}^\lam$ there is a homogeneous homomorphism $\fw\in\End_{\widetilde{A}^\p{(0)}}(\wPFlat[w\cdot 0])_{d_{w\cdot 0}-d_{w\cdot \lam}}$ such that
 \begin{equation}\label{thetabar}
\fw\widetilde{T}_\lam^0(\theta_{w\cdot\lam})=\theta_{w\cdot 0}=\widetilde{T}_\lam^0(\theta_{w\cdot\lam})\fw.
\end{equation}
Let $$\fwn:=\sum_{w\in\check{W}^\lam}\fw.$$

\begin{lem}\label{t} For each $w\in\check{W}^\lam$, we can write
$$\widetilde{T}_0^\lam(\fw)=
\sum_{1\le i,j\le|W_\lam|}q^w_{ji}[a_i^w]\pi^w_{ji},
$$
where $q^w_{ji}\in \End_{\widetilde{A}^\p{(\lam)}}(\wPFlat[w\cdot \lam])$ such that $\deg(q^w_{ji})>0$ or $q^w_{ji}=0$ for all $(j,i)\not=(1,|W_\lam|)$ and $q^w_{1,|W_\lam|}$ is the identity map.
\end{lem}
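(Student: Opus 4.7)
The plan is to expand $\widetilde{T}_0^\lam(\fw)$ in the normal form \eqref{exp}, control the coefficients $q^w_{ji}$ by a degree argument, and then pin down the unique degree-zero coefficient by applying $\widetilde{T}_0^\lam$ to the defining relation \eqref{thetabar} and comparing with \eqref{T-theta}. Since $\widetilde{T}_0^\lam$ is a graded functor and $\fw$ is homogeneous of degree $d_{w\cdot 0}-d_{w\cdot\lam}$, its image $\widetilde{T}_0^\lam(\fw)$ is a homogeneous endomorphism of $\bigoplus_{j=1}^{|W_\lam|}\Pw_j$ of the same degree. Writing it uniquely as $\widetilde{T}_0^\lam(\fw)=\sum_{j,i} q^w_{ji}[a_i^w]\pi^w_{ji}$ with $q^w_{ji}\in\End_{\widetilde{A}^\p(\lam)}(\wPFlat[w\cdot\lam])$, and using that $\pi^w_{ji}$ has degree $a_i^w-a_j^w$ while $q^w_{ji}[a_i^w]$ preserves the degree of $q^w_{ji}$, homogeneity forces $\deg q^w_{ji}=d_{w\cdot 0}-d_{w\cdot\lam}-a_i^w+a_j^w$ whenever $q^w_{ji}\neq 0$.

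Next, the inequalities $0=a_1^w\le a_j^w$ and $a_i^w\le a_{|W_\lam|}^w=d_{w\cdot 0}-d_{w\cdot\lam}$ from Lemma~\ref{finalcor} show that $\deg q^w_{ji}\ge 0$, with equality only when $a_j^w=0$ and $a_i^w=a_{|W_\lam|}^w$; the strict inequalities $a_1^w<a_2^w$ and $a_{|W_\lam|-1}^w<a_{|W_\lam|}^w$ in the same lemma then force $(j,i)=(1,|W_\lam|)$. This establishes the required property that $\deg q^w_{ji}>0$ or $q^w_{ji}=0$ for all pairs other than $(1,|W_\lam|)$. The remaining coefficient $q^w_{1,|W_\lam|}$ lies in $\End_{\widetilde{A}^\p(\lam)}(\wPFlat[w\cdot\lam])_0$, which is one-dimensional because $\wPFlat[w\cdot\lam]$ is indecomposable and $\widetilde{A}^\p(\lam)_0$ is spanned by orthogonal idempotents; hence $q^w_{1,|W_\lam|}=c_w\,\id_{\wPFlat[w\cdot\lam]}$ for some scalar $c_w\in\C$.

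To prove $c_w=1$, I would apply $\widetilde{T}_0^\lam$ to the relation $\widetilde{T}_\lam^0(\theta_{w\cdot\lam})\fw=\theta_{w\cdot 0}$ in \eqref{thetabar} and rewrite the right-hand side via \eqref{T-theta} to obtain
\[
\widetilde{T}_0^\lam\widetilde{T}_\lam^0(\theta_{w\cdot\lam})\cdot \widetilde{T}_0^\lam(\fw)=\theta_{w\cdot\lam}[a_{|W_\lam|}^w]\pi^w_{1,|W_\lam|}.
\]
Under the fixed identification from Lemma~\ref{finalcor}, the graded version of Lemma~\ref{5properties}(i) identifies $\widetilde{T}_0^\lam\widetilde{T}_\lam^0(\theta_{w\cdot\lam})$ with $|W_\lam|$ copies of $\theta_{w\cdot\lam}$ acting block-diagonally on $\bigoplus_k\Pw_k$; comparing the $(1,|W_\lam|)$ matrix entry on both sides then yields $c_w\,\theta_{w\cdot\lam}=\theta_{w\cdot\lam}$, hence $c_w=1$. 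The hard part is precisely this final step: justifying that, under the specific decomposition fixed before the statement, the composite $\widetilde{T}_0^\lam\widetilde{T}_\lam^0(\theta_{w\cdot\lam})$ is genuinely block-diagonal with $\theta_{w\cdot\lam}$ on each block; this requires the graded naturality of the isomorphism $\widetilde{T}_0^\lam\widetilde{T}_\lam^0(-)\simeq(-)^{\oplus|W_\lam|}$ and its compatibility with the identification of Lemma~\ref{finalcor}. All other steps amount to routine degree and indecomposability bookkeeping.
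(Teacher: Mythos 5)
Your argument is correct and arrives at the same conclusion, but it reorganises the paper's proof in a slightly different way that is worth pointing out. The paper derives \emph{both} conclusions of Lemma~\ref{t} simultaneously from a single computation: it applies $\widetilde{T}_0^\lam$ to the relation \eqref{thetabar}, uses the identity $\widetilde{T}_0^\lam\widetilde{T}_\lam^0(\theta_{w\cdot\lam})=\sum_j\theta_{w\cdot\lam}[a_j^w]$ and \eqref{T-theta}, and by uniqueness of the decomposition \eqref{exp} reads off that $\theta_{w\cdot\lam}q^w_{ji}=0$ for $(j,i)\neq(1,|W_\lam|)$ while $\theta_{w\cdot\lam}q^w_{1,|W_\lam|}=\theta_{w\cdot\lam}$; from the first it deduces that $q^w_{ji}$ has no degree-zero component (since otherwise that component would be a nonzero multiple of $\id$ and $\theta_{w\cdot\lam}q^w_{ji}$ would not vanish), and from the second it deduces that $q^w_{1,|W_\lam|}=\id$. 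You instead get the vanishing of the degree-zero part by a pure degree count: homogeneity of $\fwn_w$ forces $\deg q^w_{ji}=d_{w\cdot0}-d_{w\cdot\lam}-a_i^w+a_j^w$, and the \emph{strict} inequalities $a_1^w<a_2^w$ and $a_{|W_\lam|-1}^w<a_{|W_\lam|}^w$ from Lemma~\ref{finalcor} single out $(1,|W_\lam|)$ as the only index where degree zero can occur. You then only need the functorial relation to pin down the scalar $c_w=1$, exactly as in the paper. Your route is more modular and makes explicit use of the two strict inequalities in Lemma~\ref{finalcor} (which the paper does not need at this point), at the cost of having to invoke separately the one-dimensionality of $\End(\wPFlat[w\cdot\lam])_0$. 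Both treatments rely on the same non-trivial input — namely that, under the fixed identification of Lemma~\ref{finalcor}, $\widetilde{T}_0^\lam\widetilde{T}_\lam^0(\theta_{w\cdot\lam})$ acts as $\theta_{w\cdot\lam}$ on each summand $\Pw_k$ — and you are right to flag this as the step that actually needs care; the paper uses it too, with essentially the same level of justification.
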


\begin{proof} For each $w\in\check{W}^\lam$, we can write
$$\widetilde{T}_0^\lam(\fw)=
\sum_{1\le i,j\le|W_\lam|}q^w_{ji}[a_i^w]\pi^w_{ji},
$$
where $q^w_{ji}\in \End_{\widetilde{A}^\p{(\lam)}}(\wPFlat[w\cdot \lam])$ for all $i,j$ by (\ref{exp}).
Note that $\widetilde{T}_0^\lam\widetilde{T}_\lam^0(\theta_{w\cdot \lam})=\sum_{1\le j\le|W_\lam|}\theta_{w\cdot \lam}[a_j^w]$ by Lemma~\ref{5properties} and Lemma~\ref{finalcor}. By (\ref{T-theta}) and (\ref{thetabar}), we have
 \begin{align*}
\theta_{w\cdot \lam}[d_{w\cdot 0}-d_{w\cdot \lam}]\pi^w_{1,|W_\lam|}
&=\widetilde{T}_0^\lam(\theta_{w\cdot 0})\\
&=\widetilde{T}_0^\lam\widetilde{T}_\lam^0(\theta_{w\cdot\lam})\widetilde{T}_0^\lam(\fw)\\
&=\sum_{1\le j\le|W_\lam|}\theta_{w\cdot \lam}[a_j^w]\sum_{1\le i,j\le|W_\lam|}q^w_{ji}[a_i^w]\pi^w_{ji}\\
&=\sum_{1\le i,j\le|W_\lam|}\theta_{w\cdot \lam}[a_i^w]q^w_{ji}[a_i^w]\pi^w_{ji}\\
&=\sum_{1\le i,j\le|W_\lam|}(\theta_{w\cdot \lam}q^w_{ji})[a_i^w]\pi^w_{ji}.
\end{align*}
Therefore $(\theta_{w\cdot \lam}q^w_{ji})[a_i^w]=0$ for all $(j,i)\not=(1,|W_\lam|)$  and $(\theta_{w\cdot \lam}q^w_{ji})[a_i^w]=\theta_{w\cdot \lam}[d_{w\cdot 0}-d_{w\cdot \lam}]$ for $(j,i)=(1,|W_\lam|)$. Hence $\deg(q^w_{ji})>0$ or $q^w_{ji}=0$ for all $(j,i)\not=(1,|W_\lam|)$ and $q^w_{1,|W_\lam|}$ is the identity map.
\end{proof}

The following proposition will play a key role in the proof of Theorem \ref{mainthm1}.

\begin{prop} \label{keyprop} For $w,y\in\check{W}^\lam$, we have
\begin{equation}\label{goal-eq}
 \widetilde{T}_\lam^0(f)\widetilde{T}_\lam^0(h)\fwn_{y}
=\widetilde{T}_\lam^0(f)\fwn_{w}\widetilde{T}_\lam^0(h),
\end{equation}
for  all $f\in\Hom_{\widetilde{A}^\p{(\lam)}}\bigl(\wPFlat[w\cdot\lam],\wPFlat[y\cdot\lam]\bigr)_k$, $h\in\Hom_{\widetilde{A}^\p{(\lam)}}\bigl(\wPFlat[y\cdot\lam],\wPFlat[w\cdot\lam]\bigr)_{d_{y\cdot\lam}-k}$ and $k\in\Z$.

Moreover, if $\Hom_{\widetilde{A}^\p{(\lam)}}\bigl(\wPFlat[w\cdot\lam],\wPFlat[y\cdot\lam]\bigr)\not=0$, then $d_{w\cdot \lam}=d_{y\cdot \lam}$.
\end{prop}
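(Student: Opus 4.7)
Setting $F=\widetilde{T}_\lam^0(f)$ and $H=\widetilde{T}_\lam^0(h)$, I would first handle the ``moreover'' assertion: if $\Hom_{\widetilde{A}^\p(\lam)}\bigl(\wPFlat[w\cdot\lam],\wPFlat[y\cdot\lam]\bigr)\neq 0$, then $\wLFlat[w\cdot\lam]$ and $\wLFlat[y\cdot\lam]$ lie in a common block of $\widetilde{A}^\p(\lam)$, so Proposition~\ref{maincor3.1} applied to $\widetilde{A}^\p(\lam)$ forces $d_{w\cdot\lam}=d_{y\cdot\lam}$. The block-preservation $\widetilde{T}_\lam^0(\wPFlat[w\cdot\lam])\simeq\wPFlat[w\cdot 0]$ from Lemma~\ref{finalcor} combined with the same proposition applied to $\widetilde{A}^\p(0)$ also yields $d_{w\cdot 0}=d_{y\cdot 0}$. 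If the Hom space vanishes then $f=0$ and the main identity is trivially $0=0$, so from now on I abbreviate $d:=d_{w\cdot\lam}=d_{y\cdot\lam}$ and $d_0:=d_{w\cdot 0}=d_{y\cdot 0}$.

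A direct degree count shows that each side of the target identity is a homogeneous element of $\End_{\widetilde{A}^\p(0)}\bigl(\wPFlat[y\cdot 0]\bigr)$ of degree $d_0$. Koszul rigidity (\cite[Theorem 1.1.3, Proposition 2.4.1]{BGS:Koszul}) identifies the top graded piece of $\wPFlat[y\cdot 0]$ with its socle $\wLFlat[y\cdot 0]\<d_0\>$, which is one-dimensional, so any degree-$d_0$ endomorphism of $\wPFlat[y\cdot 0]$ is a scalar multiple of $\theta_{y\cdot 0}$. Writing $\text{LHS}=c_1\theta_{y\cdot 0}$ and $\text{RHS}=c_2\theta_{y\cdot 0}$, the proposition reduces to proving the scalar equality $c_1=c_2$.

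To pin down these scalars I would use the symmetric form $\tr$ on $B^\p_0$ furnished by \cite[Theorem 4.6]{MazorStrop}, normalized so that $\tr(\theta_{\star\cdot 0})=1$. Since $fh\in\End_{\widetilde{A}^\p(\lam)}\bigl(\wPFlat[y\cdot\lam]\bigr)_d$, the analogous rigidity argument inside $\widetilde{A}^\p(\lam)$ gives $fh=c\,\theta_{y\cdot\lam}$ for some $c\in\C$, and functoriality of $\widetilde{T}_\lam^0$ combined with the defining relation $\widetilde{T}_\lam^0(\theta_{y\cdot\lam})\,\fwn_y=\theta_{y\cdot 0}$ yields $c_1=c$. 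For $c_2$, applying cyclic symmetry of $\tr$ on $B^\p_0$ gives
\[
\tr(\text{RHS})=\tr\bigl(F\,\fwn_w\,H\bigr)=\tr\bigl(H\,F\,\fwn_w\bigr)=\tr\bigl(\widetilde{T}_\lam^0(hf)\,\fwn_w\bigr),
\]
so writing $hf=c'\theta_{w\cdot\lam}$ together with $\widetilde{T}_\lam^0(\theta_{w\cdot\lam})\,\fwn_w=\theta_{w\cdot 0}$ shows $c_2=c'$.

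The main obstacle is therefore the scalar identity $c=c'$, where $fh=c\,\theta_{y\cdot\lam}$ and $hf=c'\,\theta_{w\cdot\lam}$; this is precisely the ``balanced'' cyclic property one wishes to transport from the symmetric algebra $B^\p_0$ to the a priori only Frobenius $B^\p_\lam$. I would settle it by applying $\widetilde{T}_0^\lam$ to both compositions $fh$ and $hf$ and expanding $\widetilde{T}_0^\lam(\fwn_w)$ and $\widetilde{T}_0^\lam(\fwn_y)$ via Lemma~\ref{t}: the leading entry $q^{\star}_{1,|W_\lam|}=\id$ extracts the coefficients $c$ and $c'$ from the respective diagonal components of $\widetilde{T}_0^\lam\widetilde{T}_\lam^0(fh)$ and $\widetilde{T}_0^\lam\widetilde{T}_\lam^0(hf)$, while the remaining entries, having strictly positive degree, are absorbed after one further application of cyclicity of $\tr$. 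The combinatorial matching of the shift sequences $(a_j^w)=(a_j^y)$ in Lemma~\ref{finalcor} for $w$ and $y$ in a common block is the technical heart of this verification.
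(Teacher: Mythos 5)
The overall scaffolding of your argument is sound, and your reduction to the scalar identity $c=c'$ (where $fh=c\,\theta_{y\cdot\lam}$ and $hf=c'\,\theta_{w\cdot\lam}$) is a clean and accurate restatement of what must be shown: indeed $\tr(\mathrm{LHS})=c$ directly from~\eqref{thetabar}, and $\tr(\mathrm{RHS})=\tr(HF\,\fwn_w)=c'$ using cyclicity of $\tr$ on $B^\p_0$ and~\eqref{thetabar} again. Your use of Proposition~\ref{maincor3.1} to obtain $d_{w\cdot\lam}=d_{y\cdot\lam}$ (and $d_{w\cdot 0}=d_{y\cdot 0}$) is logically legitimate and gives a quicker route than the paper's self-contained derivation of the ``moreover'' clause; the paper itself later offers both alternatives in the proof of Theorem~\ref{mainthm1}. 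Your observation that a degree-$d_{y\cdot 0}$ endomorphism of $\wPFlat[y\cdot 0]$ must be a scalar multiple of $\theta_{y\cdot 0}$ is also correct and arguably cleaner than the head/socle contradiction argument the paper uses to show $\mathrm{RHS}\in\C\theta_{y\cdot 0}$.

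However, the proof has a genuine gap at exactly the step that carries all the content: the verification that $c=c'$. You have correctly identified that this is where the difficulty lies (``the balanced cyclic property one wishes to transport''), but the sketch offered does not constitute an argument. ``Applying $\widetilde{T}_0^\lam$ to both compositions $fh$ and $hf$'' does not parse as written, since $fh$ and $hf$ already live in $\widetilde{A}^\p(\lam)$-mod; what one actually needs is to apply $\widetilde{T}_0^\lam$ to the full products $FH\fwn_y$ and $F\fwn_w H$ and carry out the matrix expansion. More importantly, $\widetilde{T}_0^\lam\widetilde{T}_\lam^0(fh)$ and $\widetilde{T}_0^\lam\widetilde{T}_\lam^0(hf)$ live in different endomorphism algebras --- one over $\bigoplus_j\wPFlat[y\cdot\lam]\langle a_j^y\rangle$, the other over $\bigoplus_j\wPFlat[w\cdot\lam]\langle a_j^w\rangle$ --- so ``extracting $c$ and $c'$ from the diagonal components'' does not by itself produce a comparison between them, and ``absorbed after one further application of cyclicity of $\tr$'' is not available because $\tr$ lives on $B^\p_0$, not on the algebras obtained after applying $\widetilde{T}_0^\lam$. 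In fact, the statement $c=c'$ is (given your reductions) equivalent to the target identity, so citing cyclicity to finish is circular. The paper avoids this circle by a concrete computation: it expands $\widetilde{T}_0^\lam(\mathrm{LHS})$ and $\widetilde{T}_0^\lam(\mathrm{RHS})$ via Lemma~\ref{t}, kills all the cross terms $q_{ji}^\bullet$ of positive degree by a top-degree argument in $\End(\wPFlat[y\cdot\lam])$, and shows both sides collapse to $(fh)[a^y_{|W_\lam|}]\pi^y_{1,|W_\lam|}$; this single equality, combined with injectivity of $\widetilde{T}_0^\lam$ on $\C\theta_{y\cdot 0}$, gives $c=c'$. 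That computation --- not a ``combinatorial matching of shift sequences'' --- is the missing heart of your argument.
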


\begin{proof}
We first show $\widetilde{T}^\lam_0\Bigl(\widetilde{T}_\lam^0(f)\widetilde{T}_\lam^0(h)\fwn_{y}\Bigr)$ and $\widetilde{T}^\lam_0\Bigl(\widetilde{T}_\lam^0(f)\fwn_{w}\widetilde{T}_\lam^0(h)\Bigr)$ are equal. By Lemma~\ref{t}, we have $\widetilde{T}_0^\lam(\fwn_{y})=\sum_{1\le i,j\le|W_\lam|}q^y_{ji}[a_i^y]\pi^y_{ji}$ and
\begin{align*}
\widetilde{T}^\lam_0\Bigl(\widetilde{T}_\lam^0(f)\widetilde{T}_\lam^0(h)\fwn_{y}\Bigr)
 & =\widetilde{T}^\lam_0\widetilde{T}_\lam^0(fh)\widetilde{T}^\lam_0(\fwn_{y})\\
 &=\big(\sum_{1\le i\le|W_\lam|}(fh)[a_i^y]\big)\big(\sum_{1\le i,j\le|W_\lam|}q^y_{ji}[a_i^y]\pi^y_{ji}\big)\\
  &=\sum_{1\le i,j\le|W_\lam|}(fhq^y_{ji})[a_i^y]\pi^y_{ji}\\
    &=(fhq^y_{1,|W_\lam|})[a^y_{|W_\lam|}]\pi^y_{1,|W_\lam|}\\
    &=(fh)[a^y_{|W_\lam|}]\pi^y_{1,|W_\lam|}.
\end{align*}
The second equality follows from Lemma~\ref{5properties}(i) and Lemma~\ref{finalcor}, the fourth equality follows from $fhq^y_{ji}=0$ for all $(j,i)\not=({1,|W_\lam|})$ because $\deg(fhq^y_{ji})>d_{y\cdot\lam}$ for all $(j,i)\not=({1,|W_\lam|})$ by Lemma~\ref{t} and the assumptions, the final equality follows from $q^y_{1,|W_\lam|}$ is the identity map  by Lemma~\ref{t}.
Similarly,
\begin{align*}
\widetilde{T}^\lam_0\Bigl(\widetilde{T}_\lam^0(f)\fwn_{w}\widetilde{T}_\lam^0(h)\Bigr)
  &=\big(\sum_{1\le i\le|W_\lam|}f[a^w_i,a^y_i]\big) \big(\sum_{1\le i,j\le|W_\lam|}q^w_{ji}[a_i^w]\pi^w_{ji}\big) \big(\sum_{1\le j\le|W_\lam|}h[a^y_j, a^w_j]\big)\\
  &=\sum_{1\le i,j\le|W_\lam|}f[a^w_i,a^y_i]q^w_{ji}[a_i^w]\pi^w_{ji}h[a^y_j, a^w_j]\\
    &=f[a^w_{|W_\lam|},a^y_{|W_\lam|}]q^w_{1,|W_\lam|}[a_{|W_\lam|}^w]\pi^w_{1,|W_\lam|}h[0,0]\\
       &=f[a^w_{|W_\lam|},a^y_{|W_\lam|}]\pi^w_{1,|W_\lam|}h[0,0]\\
    &=(fh)[a^y_{|W_\lam|}]\pi^y_{1,|W_\lam|}.
\end{align*}
The first equality follows from Lemma~\ref{5properties}(i), Lemma~\ref{finalcor}, Lemma~\ref{t} and the assumptions, the third equality follows from $fq^w_{ji}h=0$ for all $(j,i)\not=({1,|W_\lam|})$ because  $\deg(fq^w_{ji}h)>d_{y\cdot\lam}$ for all $(j,i)\not=({1,|W_\lam|})$ by Lemma~\ref{t}, the fourth equality follows from $q^w_{1,|W_\lam|}$ is the identity map  by Lemma~\ref{t} and the final equality follows from $\underline{\mathrm{For}}(\pi^y_{1,|W_\lam|})=\id_{\PFlat[y\cdot \lam]}$ and $\underline{\mathrm{For}}(\pi^w_{1,|W_\lam|})=\id_{\PFlat[w\cdot \lam]}$. Therefore $\widetilde{T}^\lam_0\Bigl(\widetilde{T}_\lam^0(f)\widetilde{T}_\lam^0(h)\fwn_{y}\Bigr)$ and $\widetilde{T}^\lam_0\Bigl(\widetilde{T}_\lam^0(f)\fwn_{w}\widetilde{T}_\lam^0(h)\Bigr)$ are equal.

Now we claim that $\widetilde{T}_\lam^0(f)\fwn_{w}\widetilde{T}_\lam^0(h)\in \C\theta_{y\cdot 0}$. Otherwise, the head and the socle of the image of $\underline{\mathrm{For}}\Bigl(\widetilde{T}_\lam^0(f)\fwn_{w}\widetilde{T}_\lam^0(h)\Bigr)$ are isomorphic to $\LFlat[w\cdot 0]$ such that they are not equal. We get a contradiction, by using Lemma~\ref{5properties}(ii), to the result from computation above that the image of $\widetilde{T}^\lam_0\Bigl(\widetilde{T}_\lam^0(f)\fwn_{w}\widetilde{T}_\lam^0(h)\Bigr)$ is a simple module.  Hence $\widetilde{T}_\lam^0(f)\fwn_{w}\widetilde{T}_\lam^0(h)\in \C\theta_{y\cdot 0}$. We also have $\widetilde{T}_\lam^0(f)\widetilde{T}_\lam^0(h)\fwn_{y}\in \C\theta_{y\cdot 0}$ because $\widetilde{T}_\lam^0(f)\widetilde{T}_\lam^0(h)\fwn_{y}=\widetilde{T}_\lam^0(c\theta_{y\cdot\lam})\fwn_{y} =c\theta_{y\cdot 0}$ by the assumption and \eqref{thetabar} for some $c\in \C$.
We have $\widetilde{T}_\lam^0(f)\widetilde{T}_\lam^0(h)\fwn_{y}=\widetilde{T}_\lam^0(f)
\fwn_{w}\widetilde{T}_\lam^0(h)$ because they belong to $\C\theta_{y\cdot 0}$ and $\widetilde{T}^\lam_0\Bigl(\widetilde{T}_\lam^0(f)\widetilde{T}_\lam^0(h)\fwn_{y}\Bigr)=\widetilde{T}^\lam_0\Bigl(\widetilde{T}_\lam^0(f)
\fwn_{w}\widetilde{T}_\lam^0(h)\Bigr)$.
This completes the proof of the first part of the proposition.

For the second part, we may assume $f\in \Hom_{\widetilde{A}^\p{(\lam)}}\bigl(\wPFlat[w\cdot\lam],\wPFlat[y\cdot\lam]\bigr)_k$ for some $k\ge 0$ such that $f\not=0$. There is a homomorphism $h\in\Hom_{\widetilde{A}^\p{(\lam)}}\bigl(\wPFlat[y\cdot\lam],\wPFlat[w\cdot\lam]\bigr)_{d_{y\cdot\lam}-j}$ such that $fh=\theta_{y\cdot\lam}$. Hence $\widetilde{T}_\lam^0(f)\widetilde{T}_\lam^0(h)\fwn_{y}=\theta_{y\cdot 0}$.

Since $B^\p_0$ is a symmetric algebra over $\C$ \cite[Theorem 4.6]{MazorStrop} and $\widetilde{A}^\p{(0)}$ is Koszul (\cite[Theorem 1.1.3]{BGS:Koszul}), by Corollary \ref{symm-eq} we know that $B^\p_0=\End_{\widetilde{A}^\p{(0)}}\bigl(\oplus_{w\in\check{W}^0}\wPFlat[w\cdot 0]\bigr)$ has an admissible basis and the canonical form $\tr$ attached to the basis is a symmetrizing form. Hence
\begin{align*}
\tr\Bigl(\widetilde{T}_\lam^0(hf)\fwn_{w}\Bigr)
&=\tr\Bigl(\fwn_{w}\widetilde{T}_\lam^0(h)\widetilde{T}_\lam^0(f)\Bigr)\\
&= \tr \Bigl(\widetilde{T}_\lam^0(f)\fwn_{w}\widetilde{T}_\lam^0(h)\Bigr) \\
   &= \tr \Bigl(\widetilde{T}_\lam^0(f)\widetilde{T}_\lam^0(h)\fwn_{y}\Bigr)\\
  &=\tr\Bigl(\theta_{y\cdot 0}\Bigr)\not=0.
\end{align*}
Since $\widetilde{T}_\lam^0(hf)\fwn_{w}\in\End_{\widetilde{A}^\p{(0)}}(\wPFlat[w\cdot 0])$ is homogeneous, it follows that $\widetilde{T}_\lam^0(hf)\fwn_{w}\in \C^\times\theta_{w\cdot 0}$ and $\deg(hf)=d_{w\cdot 0}- (d_{w\cdot 0}-d_{w\cdot \lam})=d_{w\cdot \lam}$. Also $\deg(h)+\deg(f)=d_{y\cdot \lam}$. This implies $d_{y\cdot \lam}=d_{w\cdot \lam}$.
This completes the proof.
\end{proof}

\begin{rem} We conjecture that the element $\fwn=\sum_{w\in\check{W}^\lam}\overline{\theta}_w$ commutes with the element ${T}_\lam^0(h)$ for any $h\in\End_{\widetilde{A}^\p{(\lam)}}\bigl(\oplus_{w\in\check{W}^\lam}\wPFlat[w\cdot\lam]\bigr)$.
\end{rem}

\medskip
\noindent
{\textbf{Proof of Theorem \ref{mainthm1}:}} By Corollary~\ref{kmu}, it is enough to show that $B^\p_\lam =\End_{\widetilde{A}^\p{(\lam)}}\bigl(\oplus_{w\in\check{W}^\lam}\wPFlat[w\cdot\lam]\bigr)$ is a symmetric algebra.
Recall that $
B^\p_0=\End_{\widetilde{A}^\p{(0)}}\bigl(\oplus_{w\in\check{W}^0}\wPFlat[w\cdot 0]\bigr)
$.

Since $B^\p_0$ is a symmetric algebra over $\C$ \cite[Theorem 4.6]{MazorStrop} and $\widetilde{A}^\p{(0)}$ is Koszul (\cite[Theorem 1.1.3]{BGS:Koszul}), by Corollary \ref{symm-eq} we know that $B^\p_0$ has an admissible basis and the canonical form $\tr$ attached to the basis is symmetrizing form.

Recall that $\check{W}^\lam\subset\check{W}^0$ by (\ref{2inclusion}). Now we define a form $\tr_\lam$ on $B^\p_\lam$ by $\tr_\lam(g)=\tr(\widetilde{T}_\lam^0(g)\fwn)$ for all $g\in B^\p_\lam$. To show $\tr_\lam$ is non-degenerate, it is enough to show that given $w,y\in \check{W}^\lam$ and a nonzero homomorphism $g\in\Hom_{\widetilde{A}^\p{(\lam)}}\bigl(\wPFlat[w\cdot\lam],\wPFlat[y\cdot\lam]\bigr)_j$ for some $j$, there is  an $h\in B^\p_\lam$ such that $\tr_\lam(gh)\not=0$.

By Lemma~\ref{G-keylemma0}, there exists $h\in \Hom_{\widetilde{A}^\p{(\lam)}}\bigl(\wPFlat[y\cdot\lam],\wPFlat[w\cdot\lam]\bigr)_{d_{y\cdot\lam}-j}$ such that $gh=\theta_{y\cdot \lam}$. Therefore $\tr_\lam(gh)=\tr_\lam(\theta_{y\cdot \lam})=\tr\bigl(\widetilde{T}_{\lam}^{0}(\theta_{y\cdot \lam})\fwn\bigr)=\tr(\theta_{y\cdot 0})=1$. Hence $\tr_\lam$ is non-degenerate. It remains to show that $\tr_\lam$ is symmetric.

Let $g$ be  a nonzero homomorphism in $\Hom_{\widetilde{A}^\p{(\lam)}}\bigl(\wPFlat[w\cdot\lam],\wPFlat[y\cdot\lam]\bigr)_j$ for $w,y\in \check{W}^\lam$ and $j\in\Z$. By Proposition~\ref{maincor3.1} or Proposition~\ref{keyprop}, we have $d_{w\cdot\lam}=d_{y\cdot\lam}$. It is clear that $\tr_\lam(gh)=\tr_\lam(hg)=0$ for all $h\in\Hom_{\widetilde{A}^\p{(\lam)}}\bigl(\wPFlat[y'\cdot\lam],\wPFlat[w'\cdot\lam]\bigr)_k$ for $w',y'\in \check{W}^\lam$ with $(w', y', k)\not=(w, y, d_{y\cdot\lam}-j)$.  For $h\in\Hom_{\widetilde{A}^\p{(\lam)}}\bigl(\wPFlat[y\cdot\lam],\wPFlat[w\cdot\lam]\bigr)_{d_{y\cdot\lam}-j}$,
\begin{align*}
 \tr_\lam(gh) & =\tr(\widetilde{T}_\lam^0(gh)\fwn) =\tr(\widetilde{T}_\lam^0(gh)\fwn_{y})=\tr(\widetilde{T}_\lam^0(g)\widetilde{T}_\lam^0(h)\fwn_{y})\\
    & =\tr(\widetilde{T}_\lam^0(g)\fwn_{w}\widetilde{T}_\lam^0(h))=\tr(\widetilde{T}_\lam^0(h)\widetilde{T}_\lam^0(g)\fwn_{w}) =\tr(\widetilde{T}_\lam^0(hg)\fwn_{w})\\
       &=\tr_\lam(hg),
       \end{align*}
where we have used Proposition~\ref{keyprop} for the fourth equality and the fifth equality follows from the fact that $\tr$ is a symmetrizing form. Therefore $\tr_\lam$ is also a symmetrizing form. This completes the proof of Theorem \ref{mainthm1}.
\medskip

\begin{cor} \label{maincor3} For a dominant integral weight $\lam$  and $P$ a projective-injective module in $\O_\lam^\p$, the endomorphism algebra $\End_{\O_\lam^{\p}}\bigl(P\bigr)$ is a graded symmetric algebra equipped with a homogeneous non-degenerate symmetric bilinear form of degree $1-d$, where $d$ is the common graded length of all indecomposable projective-injective modules in $\O_\lam^\p$.
\end{cor}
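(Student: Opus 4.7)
The plan is to bootstrap Theorem~\ref{mainthm1} to arbitrary projective-injective modules via Corollary~\ref{kmu}, while keeping careful track of the grading to extract the degree of the symmetrizing form. First I would recall that Theorem~\ref{mainthm1} already provides a symmetrizing form $\tr_\lam$ on $B^\p_\lam$ given by $\tr_\lam(g) = \tr\bigl(\widetilde{T}_\lam^0(g)\fwn\bigr)$, where $\tr$ is the canonical symmetrizing form attached to an admissible basis of $B^\p_0$. So the two items to verify are (a) that $\tr_\lam$ is a \emph{homogeneous} form of the correct degree, and (b) that the conclusion propagates from the basic projective-injective module to every projective-injective module in $\O_\lam^\p$.

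For (a), observe that the canonical form $\tr$ attached to an admissible basis of $B^\p_0$ vanishes on every homogeneous component except those of degree $d_{w\cdot 0}$ (since $\deg\theta_{w\cdot 0} = d_{w\cdot 0}$ by \eqref{deg-theta}, and by Lemma~\ref{keylem1}(i) and Proposition~\ref{keylem4} this value is constant across each block). Hence $\tr$ is homogeneous of degree $-d_{w\cdot 0}$. On the other hand, $\fwn_w \in \End_{\widetilde{A}^\p{(0)}}(\wPFlat[w\cdot 0])_{d_{w\cdot 0}-d_{w\cdot \lam}}$ by \eqref{thetabar}, and the graded translation functor $\widetilde{T}_\lam^0$ preserves degrees. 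Consequently, for $g$ homogeneous of degree $k$, the element $\widetilde{T}_\lam^0(g)\fwn$ lies in degree $k + d_{w\cdot 0} - d_{w\cdot \lam}$, and $\tr_\lam(g) \ne 0$ forces $k = d_{w\cdot \lam}$. By Proposition~\ref{maincor3.1}, all indecomposable projective-injective modules in $\O_\lam^\p$ share a common graded length $d$, so $d_{w\cdot \lam} = d-1$ uniformly. Thus $\tr_\lam$ is homogeneous of degree $-(d-1) = 1-d$.

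For (b), write $P \simeq \bigoplus_{w \in \check{W}^\lam}(\wPFlat[w\cdot\lam])^{\oplus k_w}$ as a graded module. Proposition~\ref{maincor3.1} guarantees that $d_{w\cdot\lam} = d-1$ for every $w$ with $k_w \ne 0$, so the hypothesis of the second part of Corollary~\ref{kmu} is met. Applying that corollary to the graded symmetric algebra $B^\p_\lam$ (with the homogeneous symmetrizing form $\tr_\lam$ of degree $1-d$) yields a homogeneous symmetrizing form on $\End_{\widetilde{A}^\p(\lam)}(P)$ of the same degree $1-d$. Transporting through the equivalence \eqref{equiva} produces the desired homogeneous non-degenerate symmetric bilinear form on $\End_{\O_\lam^\p}(P)$ of degree $1-d$.

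I do not anticipate a real obstacle here: the content of the corollary is essentially a reading of degrees off the formula defining $\tr_\lam$ together with an appeal to Corollary~\ref{kmu}. The only subtle step is aligning conventions between ``graded length $d$'' (as used in the statement) and ``top degree $d_{w\cdot\lam}$'' (as used in \eqref{deg-proj} and throughout Section~\ref{Graded}), which differ by one; once this is reconciled, the formula $-d_{w\cdot\lam} = 1-d$ gives exactly the degree claimed.
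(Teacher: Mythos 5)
Your proof is correct and takes essentially the same route as the paper: Theorem~\ref{mainthm1} supplies symmetry of $B^\p_\lam$, Proposition~\ref{maincor3.1} supplies the uniform top degree $d_{w\cdot\lam}=d-1$, and Corollary~\ref{kmu} (with $A=\widetilde{A}^\p(\lam)$, $B=B^\p_\lam$, $Q\simeq P$) then yields a homogeneous symmetrizing form on $\End_{\widetilde{A}^\p(\lam)}(P)$ of degree $-(d-1)=1-d$. Your direct computation of the degree of $\tr_\lam$ in part (a) is a correct sanity check but is not strictly needed, since Corollary~\ref{kmu} already builds a homogeneous form of the right degree from an admissible basis; also note that Corollary~\ref{kmu} is applied with ambient algebra $\widetilde{A}^\p(\lam)$ rather than ``to $B^\p_\lam$'' as your phrasing suggests, though your intent is clearly the former.
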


\begin{proof} The corollary follows from Theorem~\ref{mainthm1} and Corollary~\ref{kmu}.
\end{proof}

Finally, by the main results in \cite{AST}, we know that the endomorphism algebra of any tilting module in $\O_\lam^\p$ has a cellular structure. This can be generalized to the $\Z$-graded setting without difficulty. Note that every tilting module in $\O_\lam^\p$ has a graded lift \cite[Corollary 5]{MazorOvsi}.

\begin{prop} \label{mainlem3} \text{(cf. \cite{AST})} Let $\lam$ be a dominant integral  weight and $Q$ a tilting module in $\O_\lam^\p$. Then the endomorphism algebra $\End_{\O_\lam^{\p}}\bigl(Q\bigr)$ is a $\Z$-graded cellular algebra over $\C$ in the sense of \cite{HuMathas:GradedCellular}.
\end{prop}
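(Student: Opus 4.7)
The plan is to lift the cellular datum constructed in \cite{AST} for $\End_{\O_\lam^\p}(Q)$ to the $\Z$-graded setting, via the equivalence $\mathcal{F}^\lam$ of \eqref{equiva}. By \cite[Corollary 5]{MazorOvsi}, the tilting module $Q$ admits a graded lift $\widetilde{Q}\in\widetilde{A}^\p(\lam)\gmod$; the parabolic Verma and dual parabolic Verma modules $\Delta(\mu)$ and $\nabla(\mu)$, for $\mu\in\mathring{W}^\lam\cdot\lam$, have canonical graded lifts $\wDFlat[\mu]$ and $\wNFlat[\mu]$. Moreover, the $\Delta$-flag and $\nabla$-flag of $\widetilde{Q}$ can be chosen to be filtrations by graded submodules whose successive quotients are of the form $\wDFlat[\mu]\<j\>$ (respectively $\wNFlat[\mu]\<j\>$) with multiplicities prescribed by graded BGG reciprocity.

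The cellular basis in \cite{AST} has the shape $c^\mu_{\mathsf{s},\mathsf{t}}=\alpha^\mu_{\mathsf{s}}\circ\iota_\mu\circ\beta^\mu_{\mathsf{t}}$, where $\beta^\mu_{\mathsf{t}}\colon Q\twoheadrightarrow\Delta(\mu)$ is a projection from the $\Delta$-flag indexed by $\mathsf{t}$, $\alpha^\mu_{\mathsf{s}}\colon\nabla(\mu)\hookrightarrow Q$ is an embedding from the $\nabla$-flag indexed by $\mathsf{s}$, and $\iota_\mu\colon\Delta(\mu)\to\nabla(\mu)$ is the canonical homomorphism with image $L(\mu)$. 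First I would arrange each constituent to be homogeneous: $\beta^\mu_{\mathsf{t}}\colon\widetilde{Q}\twoheadrightarrow\wDFlat[\mu]\<j_{\mathsf{t}}\>$ of degree $j_{\mathsf{t}}$, $\alpha^\mu_{\mathsf{s}}\colon\wNFlat[\mu]\<-j_{\mathsf{s}}\>\hookrightarrow\widetilde{Q}$ of degree $j_{\mathsf{s}}$, and $\iota_\mu$ of a canonical degree depending only on $\mu$. Composing yields a homogeneous element $c^\mu_{\mathsf{s},\mathsf{t}}\in\End_{\widetilde{A}^\p(\lam)}(\widetilde{Q})$, and setting $\deg(\mathsf{s})=j_{\mathsf{s}}+\tfrac12\deg(\iota_\mu)$ and $\deg(\mathsf{t})=j_{\mathsf{t}}+\tfrac12\deg(\iota_\mu)$ (adjusting by an overall shift of $\widetilde{Q}$ if a half-integer appears) realizes the degree function $\deg(c^\mu_{\mathsf{s},\mathsf{t}})=\deg(\mathsf{s})+\deg(\mathsf{t})$ required by \cite{HuMathas:GradedCellular}.

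The cellular anti-involution descends from the graded duality $\circledast$. Since tilting modules are self-dual, a choice of isomorphism $\widetilde{Q}^\circledast\simeq\widetilde{Q}\<n\>$ for the appropriate $n$ yields a degree-zero anti-involution on $\End_{\widetilde{A}^\p(\lam)}(\widetilde{Q})$ that exchanges the $\Delta$- and $\nabla$-flags. With symmetric normalizations of the $\alpha^\mu_{\mathsf{s}}$ and $\beta^\mu_{\mathsf{t}}$ mirroring the ungraded construction of \cite{AST}, this involution sends $c^\mu_{\mathsf{s},\mathsf{t}}$ to $c^\mu_{\mathsf{t},\mathsf{s}}$ and preserves the grading.

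The main technical step will be verifying the graded cellular multiplication axiom: for a homogeneous $x\in\End_{\widetilde{A}^\p(\lam)}(\widetilde{Q})$, one must show
\[
x\cdot c^\mu_{\mathsf{s},\mathsf{t}} \equiv \sum_{\mathsf{s}'} r^{\mu}_{\mathsf{s}',\mathsf{s}}(x)\, c^\mu_{\mathsf{s}',\mathsf{t}} \pmod{\text{span of }c^\nu_{\mathsf{u},\mathsf{v}}\text{ with }\nu>\mu},
\]
with scalars $r^{\mu}_{\mathsf{s}',\mathsf{s}}(x)\in\C$ independent of $\mathsf{t}$ and homogeneous of degree $\deg(x)+\deg(\mathsf{s})-\deg(\mathsf{s}')$. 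The identity itself follows from the ungraded cellular axiom in \cite{AST}; the new content is that the two-sided ideal generated by cells indexed by weights $\nu>\mu$ is a graded ideal, which in turn reduces to the observation that the step of the $\wDFlat$-flag of $\widetilde{Q}$ corresponding to weights $>\mu$ is a graded submodule. Tracking degrees on both sides then forces the homogeneity of the scalars $r^{\mu}_{\mathsf{s}',\mathsf{s}}(x)$, and the remaining graded cellular axioms reduce to their ungraded counterparts via the forgetful functor $\For$.
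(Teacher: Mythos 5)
The paper offers no proof of this proposition: it simply states that the cellular structure of \cite{AST} ``can be generalized to the $\Z$-graded setting without difficulty'' and cites \cite[Corollary 5]{MazorOvsi} for the graded lift of the tilting module. Your proposal is a reasonable and essentially correct fleshing out of exactly the argument the authors leave implicit: lift $Q$, the $\Delta$/$\nabla$-flags, and the AST basis $c^{\mu}_{\mathsf{s},\mathsf{t}}=\alpha^{\mu}_{\mathsf{s}}\iota_{\mu}\beta^{\mu}_{\mathsf{t}}$ to homogeneous data, take the anti-involution from $\circledast$ and self-duality of tilting modules, and verify the graded cellular axioms by observing that the relevant ideals are graded.

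One small inaccuracy is worth flagging, although it does not affect the result. You define $\deg(\mathsf{s})=j_{\mathsf{s}}+\tfrac12\deg(\iota_{\mu})$ and suggest that any resulting half-integers could be fixed ``by an overall shift of $\widetilde{Q}$.'' This would not actually help: $\End_{\widetilde{A}^{\p}(\lam)}(\widetilde{Q})\cong\End_{\widetilde{A}^{\p}(\lam)}(\widetilde{Q}\<n\>)$ as graded algebras, so an overall shift leaves the degree function untouched and in particular cannot repair a parity obstruction coming from odd $\deg(\iota_{\mu})$. Fortunately there is no such obstruction to repair. With the Koszul grading, the head $\wLFlat[\mu]$ of $\wDFlat[\mu]$ and the socle $\wLFlat[\mu]$ of $\wNFlat[\mu]\simeq(\wDFlat[\mu])^{\circledast}$ both sit in degree $0$, so the canonical map $\iota_{\mu}\colon\wDFlat[\mu]\to\wNFlat[\mu]$ factoring through $\wLFlat[\mu]$ is homogeneous of degree $0$. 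Thus $\deg(\iota_{\mu})=0$ for every $\mu$, the degree function reduces to $\deg(\mathsf{s})=j_{\mathsf{s}}$, and your hedge is unnecessary. The rest of the argument — that the span of $c^{\nu}_{\mathsf{u},\mathsf{v}}$ with $\nu>\mu$ is a graded ideal because the corresponding step of the graded $\wDFlat$-flag is a graded submodule, and that homogeneity of the structure constants $r^{\mu}_{\mathsf{s}',\mathsf{s}}(x)$ then follows by comparing degrees of homogeneous basis elements — is sound and matches what one would expect the authors had in mind.
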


In fact, when $\mathfrak{g}$ is a semisimple Lie algebra  of type $A$, Brundan and Kleshchev \cite{BK:HigherSchurWeyl} show that the endomorphism algebra of the basic projective-injective module in the parabolic BGG category $\O^\p$ is essentially the basic algebra of the cyclotomic quiver Hecke algebra associated to the linear quiver (i.e., the degenerate cyclotomic Hecke algebras of type $A$). So Corollary \ref{maincor3} and Proposition~\ref{mainlem3} indicate that for any semisimple Lie algebra $\mathfrak{g}$, the endomorphism algebra of the basic projective-injective module in the parabolic BGG category $\O^\p$ behaves very much like the cyclotomic quiver Hecke algebra (cf. \cite{BK:HigherSchurWeyl}, \cite{HuMathas:GradedCellular}, \cite[Proposition 3.10]{SVV}) and is a new class of interesting objects which deserves further study.

\bigskip

\section*{Acknowledgements}

The first author was supported by the National Natural Science Foundation of China (No. 11525102, 11471315).
The second author was partially supported by MoST grant 104-2115-M-006-015-MY3 of Taiwan.
\bigskip

\bigskip

\appendix
\def\theequation{\Alph{section}\arabic{equation}}
\def\theProposition{\Alph{section}\arabic{equation}}
\def\theLemma{\Alph{section}\arabic{equation}}
\def\theTheorem{\Alph{section}\arabic{equation}}
\def\theCorollary{\Alph{section}\arabic{equation}}

\section{}

In this appendix a short proof of Theorem \ref{mainthm1} is given. The proof is due to Kevin Coulembier and
Volodymyr Mazorchuk who kindly forwarded the details to us and allowed us to include it in this article after a first version of the paper was submitted.

\begin{lem} \label{App1} Let $A, B$ be two finite dimensional associative algebras over a field $K$. Let $F: A\mod\rightarrow B\mod$ and $G: B\mod\rightarrow A\mod$ be a biadjoint pair of functors such that $FG$ is isomorphic to a direct sum of $r$ copies of the identity endofunctor $\Id_{B}$ of $B$-mod, where $r\in\Z_{>0}$. Then, if $A$ is symmetric, then so is $B$.
\end{lem}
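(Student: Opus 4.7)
The plan is to characterize the symmetric condition via the Nakayama functor and transport it across the biadjoint pair. Recall that a finite-dimensional algebra $R$ is symmetric if and only if the Nakayama functor $\nu_R := R^\ast \otimes_R -$, where $R^\ast := \Hom_K(R,K)$ carries its standard $(R,R)$-bimodule structure, is isomorphic to the identity endofunctor of $R\mod$; equivalently, $R \cong R^\ast$ as $(R,R)$-bimodules. Thus it suffices to produce an isomorphism of $(B,B)$-bimodules $B \cong B^\ast$ starting from $A \cong A^\ast$.

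Writing $F \cong M \otimes_A -$ and $G \cong N \otimes_B -$ for $M := F(A)$ as a $(B,A)$-bimodule and $N := G(B)$ as an $(A,B)$-bimodule, I would first observe that the existence of both adjoints forces $M$ to be finitely generated projective as a left $B$-module and as a right $A$-module, and yields the canonical identifications $N \cong \Hom_B(M,B)$ (from $F \dashv G$) and $N \cong \Hom_{-A}(M,A)$ (from $G \dashv F$). The key technical step is then the Frobenius-reciprocity style isomorphism
\begin{equation*}
 \nu_B \circ F \;\cong\; F \circ \nu_A
\end{equation*}
as functors $A\mod \to B\mod$; at the level of bimodules this amounts to an isomorphism $B^\ast \otimes_B M \cong M \otimes_A A^\ast$ of $(B,A)$-bimodules, and the two-sided projectivity of $M$ lets one compute both sides as $\Hom_K(N,K)$ via the dual-basis lemma combined with the two presentations of $N$.

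With this intertwining in hand the proof concludes formally. $A$ symmetric gives $\nu_A \cong \Id_{A\mod}$, so $\nu_B \circ F \cong F$, and post-composing with $G$ yields $\nu_B \circ (FG) \cong FG$. The hypothesis $FG \cong r\cdot \Id_B$ then gives $r\cdot\nu_B \cong r\cdot\Id_B$ as endofunctors of $B\mod$, and evaluating at the regular $(B,B)$-bimodule $B$ produces an isomorphism $(B^\ast)^r \cong B^r$ of $(B,B)$-bimodules. Since the category of finite-dimensional $(B,B)$-bimodules is Krull--Schmidt, cancellation forces $B^\ast \cong B$, which is exactly the statement that $B$ is symmetric. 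The main obstacle is the intertwining isomorphism $\nu_B \circ F \cong F \circ \nu_A$ — the remaining steps are formal — and the delicate point is that biadjointness forces precisely the two-sided projectivity of $M$ together with the compatibility of the two descriptions of $N$ that are needed to match the two bimodules via the dual-basis computation.
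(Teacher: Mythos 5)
Your argument is correct but proceeds along a genuinely different route from the paper's. The paper first uses Morita closure of symmetry and idempotent truncation (together with Auslander's results on categories of modules with presentations by a fixed projective) to reduce without loss of generality to the situation $G({}_BB)\cong {}_AA$; it then identifies $B$ with a subalgebra of $A$, recognizes $G$ as induction and $F$ as restriction along this inclusion, observes that ${}_BA_B\cong({}_BB_B)^{\oplus r}$, and transports the bimodule isomorphism ${}_AA_A\cong {}_AA^*_A$ down to $B$-$B$-bimodules before invoking Krull--Schmidt. You instead work intrinsically with the representing bimodules $M=F(A)$ and $N=G(B)$: biadjointness forces $M$ to be finitely generated projective on both sides and gives the two descriptions $N\cong\Hom_B(M,B)\cong\Hom_{-A}(M,A)$, from which the intertwining $\nu_B\circ F\cong F\circ\nu_A$ follows (both representing bimodules compute $\Hom_K(N,K)$), and the rest is formal. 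Both proofs end with the same Krull--Schmidt cancellation of $r$ copies of $B$ against $r$ copies of $B^*$ in the category of $(B,B)$-bimodules. Your route avoids the Morita/idempotent reduction step entirely and makes the role of the Nakayama functor explicit, at the price of needing more care with the bimodule adjunctions (two-sided projectivity of $M$, the dual-basis identifications); the paper's route keeps the bimodule computation elementary at the price of the preliminary reduction via Auslander's theory.
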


\begin{proof} Let us start by observing that \begin{enumerate}
\item the class of symmetric algebras is closed under Morita equivalence (this follows immediately e.g. from Lemma 3.1(2) of \cite{MazorStrop});
\item if an algebra $Q$ is symmetric and $e\in Q$ is an idempotent in $Q$, then $eQe$ is symmetric (this follows by restricting the symmetric trace form from
$Q$ to $eQe$).
\end{enumerate}

Now, the functor $G$, being biadjoint to another functor, is exact and sends projectives to projectives. Therefore $G({}_{B}B)$ is a projective $A$-module and for every $B$-module $M$, $G(M)$ has a presentation with modules being the sum of copies of $G({}_{B}B)$, where ${}_{B}B$ denotes the $K$-vector space $B$ equipped with the natural left $B$-module structure. Using the two observations above together with  \cite[Proposition 5.2 and 5.3]{Aus1}, without loss of generality we may assume that $G({}_{B}B) \cong {}_{A}A$. Since $FG\cong\Id_{B}^{\oplus r}$, the functor $G$ is faithful. By functoriality, $G$ thus defines an algebra monomorphism from $B^{\op}= \End_{B}({}_{B}B)$ to $A^{\op}= \End_{A}({}_{A}A)$. Hence $G$ identifies $B$ with a unital subalgebra of $A$.

Being exact, $G$ is uniquely defined by its value on ${}_{B}B$ and $\End_{B}({}_{B}B)$. From the identification of $B$ as a subalgebra of $A$, we have that $G$ is isomorphic to induction from $B$ to $A$ by Eilenberg-Watts theorem (see, for example, \cite[Chapter II, Theorem 2.3]{Bass}), that is, $G\cong{}_{A}A_{B}\otimes_{B}-$. By adjointness, $F$ is then isomorphic to the restriction from $A$ to $B$, that is, $F\cong{}_{B}A_{A}\otimes_{A}-$. Therefore $FG\cong{}_{B}A_{B}\otimes_{B}-$.  As $FG\cong\Id_{B}^{\oplus r}$, the $B$-$B$-bimodule ${}_{B}A_{B}$, which represents $FG$, is isomorphic to a direct
sum of $r$ copies of ${}_{B}B_{B}$ by \cite[Chapter II, Proposition 2.2]{Bass}. Since $A$ is symmetric, we have ${}_{A}A_{A}\cong {}_{A}A^{\ast}_{A}$ as an $A$-$A$-bimodule and hence $$
\underbrace{{}_{B}B_{B}\oplus\cdots\oplus{}_{B}B_{B}}_{\text{$r$ copies}}\cong\underbrace{{}_{B}B^{\ast}_{B}\oplus\cdots\oplus{}_{B}B^{\ast}_{B}}_{\text{$r$ copies}}.$$
 This implies ${}_{B}B_{B}\cong {}_{B}B^{\ast}_{B}$ as $B$-$B$-bimodules by Krull-Schmidt Theorem. Therefore $B$ is symmetric.
\end{proof}

For a dominant integral weight $\psi$ of a complex semisimple Lie algebra $\g$, we recall that $
A^\p(\psi):=\biggl(\End_{\O^\p}\Bigl(\oplus_{\mu\in {{\mathring{W}}^\psi}}P(\mu)\Bigr)\biggr)^{\mathrm{op}}
$ denotes the basic algebra of $\O_\psi^\p$ defined in Definition~\ref{KoszulBasic} and  $\Lam_0^\psi=\bigl\{w\cdot\psi | w\in\check{W}^\psi\bigr\}$ denotes the set of socular weight of $\O_\psi^\p$ defined in (\ref{2defn}). Let ${B}^\p(\psi):=\biggl(\End_{\O^\p}\bigl(\oplus_{\mu\in\Lam_0^\psi}P(\mu)\bigr)\biggl)^{\mathrm{op}}$ denote the opposite algebra of  the endomorphism algebra of the basic projective-injective module of $\O_\psi^\p$.

For a dominant integral weight $\lam$, let $\mathcal{C}_\lam$ (resp. $\mathcal{C}'_\lam$) be the full subcategories of $A^\p(\lam)\mod$ (resp. $A^\p(0)\mod$) consisting of modules with a presentation $Q_1\rightarrow Q_0\rightarrow M\rightarrow 0$ such that $Q_0$ and $Q_1$ are sum of summands of the basic projective-injective $A^\p(\lam)$-module (resp., the projective-injective $A^\p(0)$-module $\bigoplus_{w\in\check{W}^\lam}\PFlat[w\cdot 0]$).  By Lemma~\ref{5properties} (i) and (iii), $T_0^\lam$ sends $\mathcal{C}'_\lam$ to $\mathcal{C}_\lam$ and $T_\lam^0$ sends $\mathcal{C}_\lam$ to $\mathcal{C}'_\lam$. Note that the restriction of the functor $T_0^\lam$ to $\mathcal{C}'_\lam$ and the restriction of the functor $T_\lam^0$ to $\mathcal{C}_\lam$ are a biadjoint pair of exact functor. By  \cite[Proposition 5.2 and 5.3]{Aus1}, there are equivalences of categories: $\mathcal{C}_\lam\simeq {B}^\p(\lam)\mod$ and $\mathcal{C}'_\lam\simeq \hat{B}^\p(0)\mod$,
where $\hat{B}^\p(0):=\biggl(\End_{\O^\p}\bigl(\oplus_{w\in\check{W}^\lam}\PFlat[w\cdot 0]\bigr)\biggl)^{\mathrm{op}}$. Recall the observation of (1) and (2) in the proof of Lemma~\ref{App1}. Now Theorem~\ref{mainthm1} follows from \cite[Theorem 4.6]{MazorStrop} and  Lemma~\ref{App1}.

\bigskip

\end{document}